\newcommand{\Diff}{\mathcal{D}}
\newcommand{\Lie}{\mathcal{L}}
\newcommand{\id}{\text{id}}
\newcommand{\grad}{\nabla}
\newcommand{\Laplacian}{\Delta}
\newcommand{\bundleproj}{\overline{P_{\theta}}}
\DeclareMathOperator{\ad}{ad} \DeclareMathOperator{\Ad}{Ad}
\DeclareMathOperator{\diver}{div} 
\newcommand{\Exp}{\text{Exp}}
\newcommand{\Reeb}{E}
\newcommand{\adj}{\star}
\DeclareMathOperator{\ann}{ann}
\newcommand{\pushedvolume}{\tilde{\mu}}
\newcommand{\mingeodesic}{\chi}
\newcommand{\contact}{\theta}
\newcommand{\Diffcontact}{\Diff_{\contact}}
\newcommand{\Diffexcontact}{\Diff_{q}}
\newcommand{\Diffsexcontact}{\Diff^s_q}
\newcommand{\Diffximu}{\Diff_{\Reeb, \mu}}
\newcommand{\Diffham}{\Diff_{\text{Ham}}}
\newcommand{\Diffxi}{\Diff_{\Reeb}}
\newcommand{\Diffsxi}{\Diff^s_{\Reeb}}
\newcommand{\Diffsmu}{\Diff^s_{\mu}}
\newcommand{\Diffmu}{\Diff_{\mu}}
\newcommand{\Func}{\mathcal{F}}
\newcommand{\llangle}{\langle\!\langle}
\newcommand{\rrangle}{\rangle\!\rangle}
\newcommand{\Llangle}{\Big\langle\!\!\Big\langle}
\newcommand{\Rrangle}{\Big\rangle\!\!\Big\rangle}
\newcommand{\betaimage}{T_{\contact}\Lambda^1_{\Reeb,\mu}}
\newcommand{\pullbackimage}{\Lambda_{\Reeb}}
\newcommand{\imcont}{\alpha}
\newcommand{\imcontvar}{\beta}
\newcommand{\pullback}{\mathcal{P}}
\newtheorem{theorem}{Theorem}[section]
\newtheorem{proposition}[theorem]{Proposition}
\newtheorem{lemma}[theorem]{Lemma}
\newtheorem{corollary}[theorem]{Corollary}
\theoremstyle{definition}
\newtheorem{defn}[theorem]{Definition}
\newtheorem{exmp}[theorem]{Example}
\theoremstyle{remark}
\newtheorem{rem}[theorem]{Remark}
\begin{document}

\title[]{Riemannian geometry of the quantomorphism group}

\author{David G. Ebin}
\address{Department of Mathematics, Stony Brook University, Stony Brook, NY 11794-3651}
\email{ebin@math.sunysb.edu}
\author{Stephen C. Preston}
\address{Department of Mathematics, University of Colorado,
Boulder, CO 80309-0395} \email{stephen.preston@colorado.edu}
\date{\today}

\begin{abstract}
We are interested in the geometry of the group $\Diffexcontact(M)$ of diffeomorphisms preserving a contact form $\contact$ on a manifold $M$. We define a Riemannian metric on $\Diffexcontact(M)$, compute the corresponding geodesic equation, and show that solutions exist for all time and depend smoothly on initial conditions. In certain special cases (such as on the $3$-sphere), the geodesic equation is a simplified version of the quasigeostrophic equation,
so we obtain a new geodesic interpretation of this geophysical system. 
We also show that the genuine quasigeostrophic equation on $S^2$ can be obtained as an Euler-Arnold equation on a one-dimensional central extension of $T_{\id}\Diffexcontact(M)$, and that our global existence result extends to this case.

If $\Reeb$ is the Reeb field of $\contact$ 
and $\mu$ is the volume form, assumed compatible in the sense that $\diver{\Reeb}=0$, we show that $\Diffexcontact(M)$ is a smooth submanifold of $\Diffximu(M)$, the space of diffeomorphisms preserving the vector field $\Reeb$ and the volume form $\mu$, in the sense of $H^s$ Sobolev completions. 
The latter manifold is related to symmetric motion of ideal fluids. We further prove that the corresponding geodesic equations and projections are $C^{\infty}$ objects in the Sobolev topology.
\end{abstract}

\maketitle

\section{Introduction and Background}

Contact geometry is the odd-dimensional analogue of symplectic geometry~\cite{EKM,geiges}, and the purpose
of this article is to extend some results on the Riemannian geometry of symplectic forms~\cite{ebinsymplectic} to 
the contact case. On a manifold $M$ of dimension $2n+1$, a \emph{contact structure} is a distribution of $2n$-planes
which \emph{maximally} does not satisfy the Frobenius integrability condition at any point. If the contact structure is orientable (as in typical examples), it is determined as the null space of a \emph{contact form}
$\contact$, a $1$-form on $M$ for which maximal nonintegrability translates into 
\begin{equation}\label{contactnonintegrable}
\contact\wedge (d\contact)^n \ne 0
\end{equation} everywhere. 
On $\mathbb{R}^{2n+1}$ with coordinates $(x^1,\ldots, x^n, y^1,\ldots, y^n, z)$,
the basic example is given by
\begin{equation}\label{darbouxcontact}
\contact = dz + \sum_{k=1}^n x^k \, dy^k.
\end{equation}
By the Darboux theorem~\cite{geiges}, we can always find local coordinates on any manifold $M$ such that $\contact$ is given by \eqref{darbouxcontact}.

It is important to note that if $\contact$ is a contact form, then for any nowhere zero function $F$ on $M$, the $1$-form $F\contact$ also satisfies \eqref{contactnonintegrable} and its null space is the same contact structure. Hence if we care primarily about the contact \emph{structure}, the symmetries are the group $\Diffcontact(M)$ of diffeomorphisms $\eta$ such that $\eta^*\contact = F\contact$ for some positive $F$; if we care primarily about the contact \emph{form}, the symmetries are the group $\Diffexcontact(M)$ of diffeomorphisms $\eta$ with $\eta^*\contact=\contact$. In this article we will work with the latter group, which following Ratiu-Schmid~\cite{RS}, we call the \emph{quantomorphism group}. In a separate article~\cite{ebinprestoncontacto} we will study the geometry of the contactomorphism group $\Diffcontact(M)$, which also has a number of interesting features and has not been studied from the Riemannian point of view at all.

The quantomorphism group was studied by Omori~\cite{omori} (as an ILH group), Ratiu-Schmid~\cite{RS} (as a Hilbert manifold), and Smolentsev~\cite{smolentsevclassical} (as a formal Riemannian manifold). 
More recently several authors have observed that the quantomorphism group is the proper configuration 
space for the geodesic Vlasov equations (our geodesic equation is essentially a special case of this). 
Holm-Tronci~\cite{HT} proposed a general quadratic Hamiltonian
on the quantomorphism group and studied its moments. Gay-Balmaz and Vizman~\cite{GV} computed the momentum maps for this equation, and Gay-Balmaz and Tronci~\cite{GT} found an interesting totally geodesic subgroup. We would like to thank Darryl Holm, Tudor Ratiu, and Fran\c cois Gay-Balmaz for very helpful discussions. Our main contribution here is local and global well-posedness of the geodesic equation in Sobolev spaces.

We now review some well-known properties of contact structures.

\begin{proposition}\label{gammaoperatordef}
There is a Reeb vector field $\Reeb$ which is uniquely defined in each tangent space by the conditions $\contact(\Reeb)=1$ and $\iota_{\Reeb}d\contact = 0$. 
The linear map $u\mapsto \iota_ud\contact$ from $T_xM$ to $T_x^*M$
yields a pointwise isomorphism $\gamma$ from the null space of $\contact$ to the annihilator of $\Reeb$.
\end{proposition}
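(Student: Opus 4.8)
The plan is to reduce everything to pointwise linear algebra on $V := T_xM$, which has odd dimension $2n+1$. Fix $x\in M$, write $\contact$ for $\contact_x\in V^*$ and $\omega$ for $d\contact_x\in\Lambda^2V^*$, and let $\xi:=\ker\contact\subset V$ be the $2n$-dimensional contact hyperplane. The first step is to extract from the nonintegrability condition \eqref{contactnonintegrable} the fact that the restriction $\omega|_\xi$ is a nondegenerate (i.e.\ symplectic) $2$-form on $\xi$: choosing a basis $e_1,\dots,e_{2n}$ of $\xi$ and any $w\in V\setminus\xi$ and evaluating $\contact\wedge\omega^n$ on $(w,e_1,\dots,e_{2n})$, every term in the expansion of the wedge product in which $\contact$ is fed one of the $e_j$ vanishes, so the value equals $\contact(w)\,(\omega|_\xi)^n(e_1,\dots,e_{2n})$. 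Since \eqref{contactnonintegrable} is nonzero and $\contact(w)\ne 0$, we get $(\omega|_\xi)^n\ne 0$, which by the standard structure theory of antisymmetric forms is equivalent to nondegeneracy of $\omega|_\xi$.

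Next I would construct the Reeb field. The linear map $u\mapsto\iota_u\omega$ from $V$ to $V^*$ is given by the antisymmetric matrix of $\omega$, whose rank is even and hence at most $2n$, so this map has nontrivial kernel, spanned by some $w\ne 0$. If $\contact(w)=0$ then $w\in\xi$ and $\iota_w(\omega|_\xi)=0$, contradicting the first step; hence $\contact(w)\ne0$ and $\Reeb:=w/\contact(w)$ satisfies $\contact(\Reeb)=1$ and $\iota_{\Reeb}\omega=0$. The first step also shows the kernel of $u\mapsto\iota_u\omega$ is exactly one-dimensional, since it meets $\xi$ trivially and $\dim\xi=2n$; uniqueness of $\Reeb$ follows because any competitor $\Reeb'$ satisfies $\Reeb'-\Reeb=c\Reeb$ for some scalar $c$, and applying $\contact$ forces $c=0$. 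Smoothness of $x\mapsto\Reeb_x$ is then a consequence of solving, at each point, a linear system whose coefficients vary smoothly in $x$ and which has a unique solution everywhere (Cramer's rule applied to a suitable invertible $(2n+1)\times(2n+1)$ system built from $\contact$ and $\omega$, or equivalently from smoothness of the $2n$-plane field $\xi$ together with the first step).

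Finally I would verify that $\gamma(u):=\iota_u\omega$, restricted to $\xi$, is the asserted isomorphism onto $\ann(\Reeb)$. For $u\in\xi$ we have $(\iota_u\omega)(\Reeb)=\omega(u,\Reeb)=-(\iota_{\Reeb}\omega)(u)=0$, so $\gamma$ does map $\xi$ into the $2n$-dimensional space $\ann(\Reeb)\subset V^*$; and $\gamma$ is injective on $\xi$ because $\iota_u\omega=0$ with $u\in\xi$ implies $\iota_u(\omega|_\xi)=0$, hence $u=0$ by nondegeneracy of $\omega|_\xi$. An injective linear map between vector spaces of equal finite dimension is an isomorphism, so $\gamma$ is a pointwise isomorphism, depending smoothly on $x$ since $\omega=d\contact$ does.

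The argument is elementary; the one step that needs genuine care is the first, namely correctly reading off nondegeneracy of $\omega|_\xi$ from \eqref{contactnonintegrable} by tracking exactly which terms of the wedge product $\contact\wedge\omega^n$ survive evaluation on a basis adapted to the splitting $V=\mathbb{R}w\oplus\xi$. Once that is in hand, the remainder is bookkeeping with ranks of antisymmetric matrices and a dimension count.
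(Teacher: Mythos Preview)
Your proof is correct and takes a genuinely different route from the paper's. The paper invokes the Darboux theorem to write $\contact = dz + \sum_k x^k\,dy^k$ locally, then verifies both claims by direct computation in these coordinates: the Reeb field is $\partial_z$, the null space of $\contact$ has the explicit basis $\{\partial_{x^k},\,\partial_{y^k}-x^k\partial_z\}$, and one checks by hand that $\gamma$ sends this basis to the basis $\{dy^k,-dx^k\}$ of $\ann(\Reeb)$. Your argument instead works invariantly at a point, extracting the nondegeneracy of $d\contact|_{\ker\contact}$ directly from the nonvanishing of $\contact\wedge(d\contact)^n$ and then proceeding by parity-of-rank considerations for antisymmetric forms and a dimension count. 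Your approach is more self-contained in that it does not appeal to the Darboux normal form (a nontrivial theorem in its own right) and makes transparent exactly which algebraic fact---symplectic nondegeneracy on the contact hyperplane---is doing the work. The paper's approach, on the other hand, is quicker to write once Darboux is granted and has the side benefit of producing explicit coordinate formulas that are reused later (for instance in \eqref{ScontactDarboux} and \eqref{contactlaplacianDarboux}).
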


\begin{proof}
In Darboux coordinates we have $d\contact = \sum_{k=1}^n dx^k \wedge dy^k$, and 
it is easy to check that the Reeb field must be $\Reeb = \partial_z$. The null space of $\contact$ is spanned by the basis $\{ \partial_{x^k}, \partial_{y^k}-x^k \, \partial_z : 1\le k\le n\}$, and we have $\gamma(\partial_{x^k}) = dy^k$ and $\gamma(\partial_{y^k}-x^k \, \partial_z) = -dx^k$ for all $k$. Since $dy^k(\Reeb) = dx^k(\Reeb)=0$, the map $\gamma$ takes a basis of vectors annihilated by $\contact$ to a basis of covectors that annihilate $\Reeb$. Since these definitions are coordinate-invariant, they define global objects $\Reeb$ and $\gamma$.
\end{proof}

Properties of the Reeb field determine the structure of the quantomorphism group, in the following way.

\begin{proposition}\label{reebpreserving}
Any diffeomorphism $\eta$ for which $\eta^*\contact = \contact$ must also satisfy $\eta_*\Reeb = \Reeb$. As a result the quantomorphism group $\Diffexcontact(M)$ is a subgroup of $\Diffxi(M)$, the group of diffeomorphisms preserving $\Reeb$ (alternatively, the diffeomorphisms commuting with the flow of $\Reeb$).
\end{proposition}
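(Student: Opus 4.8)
The plan is to exploit the uniqueness clause of Proposition~\ref{gammaoperatordef}: since $\Reeb$ is the \emph{only} vector field on $M$ with $\contact(\Reeb)=1$ and $\iota_{\Reeb}d\contact=0$, it suffices to show that the pushforward $\eta_*\Reeb$ satisfies these same two pointwise conditions whenever $\eta^*\contact=\contact$. A preliminary observation is that such an $\eta$ also preserves $d\contact$, since $\eta^*d\contact=d(\eta^*\contact)=d\contact$ by naturality of the exterior derivative.

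For the normalization condition, I would use the defining identity of the pullback: at any point $p=\eta(x)$,
$\contact_p\bigl((\eta_*\Reeb)_p\bigr)=\contact_{\eta(x)}\bigl((d\eta_x)(\Reeb_x)\bigr)=(\eta^*\contact)_x(\Reeb_x)=\contact_x(\Reeb_x)=1$,
so $\contact(\eta_*\Reeb)\equiv 1$. For the kernel condition, I would note that because $\eta$ is a diffeomorphism every tangent vector at $p=\eta(x)$ is of the form $(d\eta_x)(v)$ for some $v\in T_xM$, and then
$(d\contact)_p\bigl((\eta_*\Reeb)_p,(d\eta_x)(v)\bigr)=(\eta^*d\contact)_x(\Reeb_x,v)=(d\contact)_x(\Reeb_x,v)=0$,
hence $\iota_{\eta_*\Reeb}d\contact=0$. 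By the uniqueness asserted in Proposition~\ref{gammaoperatordef}, this forces $\eta_*\Reeb=\Reeb$, which is the first assertion.

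To finish, I would obtain the inclusion $\Diffexcontact(M)\subseteq\Diffxi(M)$ and the reformulation in terms of flows by invoking the standard fact that $\eta_*\Reeb=\Reeb$ if and only if $\eta$ intertwines the flow of $\Reeb$ with itself: once $\eta_*\Reeb=\Reeb$, both $t\mapsto\eta(\phi_t(\eta^{-1}(p)))$ and $t\mapsto\phi_t(p)$ solve $\dot c=\Reeb\circ c$ with $c(0)=p$, so they agree by uniqueness of integral curves, and the converse is obtained by differentiating at $t=0$.

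I do not expect a genuine obstacle here; the only point demanding care is consistent bookkeeping with the pullback/pushforward conventions, and in particular the use of surjectivity of $d\eta_x$ to reduce the kernel condition for $\eta_*\Reeb$ at $\eta(x)$ to the corresponding condition for $\Reeb$ at $x$.
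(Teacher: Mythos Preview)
Your proof is correct and follows essentially the same approach as the paper: both verify that $\eta_*\Reeb$ satisfies the two defining conditions $\contact(\eta_*\Reeb)=1$ and $\iota_{\eta_*\Reeb}d\contact=0$, then invoke the uniqueness of the Reeb field from Proposition~\ref{gammaoperatordef}. Your version is slightly more explicit in the pointwise bookkeeping and adds the standard flow-commutation equivalence, which the paper merely states parenthetically.
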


\begin{proof}
We want to show that $\eta^*\contact=\contact$ implies that $D\eta \circ\Reeb = \Reeb\circ\eta$. We write $\eta_*\Reeb = D\eta\circ\Reeb\circ\eta^{-1}$ so that $\eta_*\Reeb$ is another vector field on $M$. Then we have
$$ 
\iota_{\eta_*\Reeb}d\contact = \iota_{\Reeb}(\eta^*d\contact) \circ \eta^{-1} = \iota_{\Reeb}(d\contact) \circ\eta^{-1} = 0,
$$
along with $$\contact(\eta^*\Reeb) = \eta^*\contact(\Reeb)\circ\eta^{-1} = 1\circ\eta^{-1}=1.$$
Since $\eta_{*}\Reeb$ and $\Reeb$ satisfy the same conditions, they must be equal.
\end{proof}

If $\mu$ is the Riemannian volume form, and we assume (as we shall always do) that $\mu$ is a constant multiple of $\contact \wedge (d\contact)^n$, then clearly $\eta^*\contact = \contact$ implies that $\eta^*\mu = \mu$. Hence Proposition \ref{reebpreserving} shows that $\Diffexcontact(M)$ is a subgroup of $\Diffximu(M)$, which we refer to as the group of \emph{volumorphisms with symmetry}.

\begin{exmp}\label{hopffibration}
Our most important example will be $S^3$, considered as a group of unit quaternions in $\mathbb{R}^4$. It has a global basis of left-invariant vector fields $\{E_1, E_2, E_3\}$ satisfying the relations 
$$[E_1, E_2] = -2E_3, \qquad [E_2, E_3] = -2E_1, \quad\text{and}\quad [E_3,E_1] = -2E_2.$$ Letting $\{\omega^1, \omega^2, \omega^3\}$ denote the dual basis, a contact form is given by $\contact = \omega^1$, with Reeb field $\Reeb = E_1$. We have $d\contact = 2 \omega^2\wedge \omega^3$, so that $\contact\wedge d\contact$ is twice the Riemannian volume form. If we declare $\{E_i\}$ to be orthongal with  $\lVert E_1\rVert = \alpha$ and $\lVert E_2\rVert = \lVert E_3\rVert = 1$ for some parameter $\alpha>0$, then we have the Berger metric on $S^3$. (See for example \cite{petersen}.)

The Reeb field $\Reeb=E_1$ is Killing, and all orbits are closed and have the same length. The quotient is $S^2$, and the projection $\pi\colon S^3\to S^2$ is the well-known Hopf fibration. The pullback of the standard area form on $S^2$ is a constant multiple of $d\contact$. Any quantomorphism $\eta$ on $S^3$ generates a symplectomorphism $\zeta$ on $S^2$ by the formula $\zeta(p) = \pi(\eta(\pi^{-1}(p)))$, which is well-defined since $\eta$ commutes with the flow of $E_1$ and hence is constant on the fibers of $\pi$. Notice that the flow of $E_1$ gives a family of quantomorphisms which preserve the fibers of $\pi$, and hence all map to the identity symplectomorphism of $S^2$. This is the typical behavior we expect quantomorphisms to have; see below.
\end{exmp}

Not every contact manifold has an interesting quantomorphism group. 

\begin{exmp}
On $M=\mathbb{T}^3$, we can check that $\contact = \sin{z} \, dx + \cos{z} \, dy$ satisfies $\contact\wedge d\contact = dx\wedge dy\wedge dz$, so that $\contact$ is a contact form. The Reeb field is $\Reeb=\sin{z} \, \partial_x + \cos{z} \, \partial_y$. Every quantomorphism must preserve the Reeb field, but the Reeb field has nonclosed orbits whenever $\tan{z}$ is irrational, and hence any function which is constant on the orbits must actually be a function only of $z$. It is then easy to see that the identity component of $\Diffexcontact(\mathbb{T}^3)$ consists of diffeomorphisms of the form
$$ \eta(x,y,z) = \big(x+p(z)\sin{z} + p'(z) \cos{z}, y+p(z)\cos{z}-p'(z)\sin{z}, z\big)$$
for some function $p\colon S^1\to \mathbb{R}$. This group is abelian, so any right-invariant metric will actually be bi-invariant, and all geodesics will be one-parameter subgroups. 
\end{exmp}


Instead what we want is for all the orbits of the Reeb field $\Reeb$ to be closed and of the same period, so that the flow $t\mapsto \varphi(t,x)$ of $\Reeb$ is periodic.
In this case we say that the vector field and the corresponding contact form are \emph{regular}~\cite{RS}. When this happens, there is a Boothby-Wang fibration~\cite{BW} $\pi\colon M\to N$, where $N$ is a symplectic manifold of dimension $2n$ with symplectic form $\omega$ satisfying $\pi^*\omega=d\contact$. Using Darboux coordinates $(x^k, y^k, z)$ on $M$, we get coordinates $(x^k, y^k)$ on $N$ such that $\omega = \sum_k dx^k\wedge dy^k$. The Hopf fibration of Example \ref{hopffibration} is the basic example.

\begin{exmp}
To get a Boothby-Wang fibration, it is important that the orbits are not only closed but of the same length. Here we give an example of what happens when this fails.
The Hopf fibration is commonly constructed by considering $S^3$ as the unit sphere in $\mathbb{C}^2.$ Using complex coordinates $(w,z)$ we let the circle action be $\tau \mapsto (e^{i\tau} w, e^{i\tau} z)$ so that all orbits are closed and of period $2\pi.$  A variation of this, which also comes from a contact form, is the action  $\tau \mapsto (e^{ik\tau} w, e^{il\tau} z)$
where $k$ and $l$ are relatively prime.  In this case the orbits have period $2\pi$ except when $w$ or $z$ is zero in which case they have period of $2\pi/k$ or $2\pi/l$.  Thus the quotient $S^3/S^1$ has singularities and is not a manifold at the images of these points.  This construction actually comes from a simple problem in mechanics.  Take two harmonic oscillators with periods $2\pi/k$ and $2\pi/l$ respectively.  Each of their motions is a curve in $\mathbb{C}$ so together they are in $\mathbb{C}^2.$  $S^3 \subset \mathbb{C}^2$ will then be a constant energy surface and the motion on it will be the $S^1$ action described above.  We thank H. Hofer (personal communication) for this example.
\end{exmp}

Let us now review some facts about the group of quantomorphisms. At first we will  assume that all objects are $C^{\infty}$; later we will work with spaces of Sobolev diffeomorphisms, so that we can do analysis on Hilbert manifolds and use tools of differential analysis such as the inplicit function theorem.

\begin{proposition}\label{quantotangentprop}
The tangent space to the quantomorphism group 
$ \Diffexcontact(M) = \{\eta\in \Diff(M)\, \big\vert \, \eta^*\contact = \contact\}$ at the identity is 
\begin{equation}\label{quantotangent}
T_{\id}\Diffexcontact = \big\{ S_{\contact}f \, \vert \,  
f\in \Func_E(M,\mathbb{R})\},
\end{equation}
where 
\begin{equation}\label{functionEspace}
\Func_E(M,\mathbb{R}) = \{ f\colon M\to \mathbb{R} \,\big\vert\, 
\Reeb(f)\equiv 0\big\}
\end{equation}
and the operator $S_{\contact}$ is defined by the conditions 
\begin{equation}\label{Salphadef}
u=S_{\contact}f \quad \Longleftrightarrow \quad  \contact(u) = f \quad \text{and}\quad i_u d\contact = -df.
\end{equation}
For any $f$ with $\Reeb(f)\equiv 0$, the field $u=S_{\contact}f$ satisfies  $\Lie_u\contact = 0$; conversely if $\Lie_u\contact = 0$, then $u=S_{\contact}f$ for some $f$ with $\Reeb(f)\equiv 0$.
\end{proposition}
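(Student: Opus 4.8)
The plan is to handle the three assertions in turn: first that $S_{\contact}$ is a well-defined linear injection of $\Func_E(M,\mathbb{R})$ into vector fields, then that its image is exactly $\{u : \Lie_u\contact = 0\}$, and finally that this image is $T_{\id}\Diffexcontact$. For the well-definedness of $S_{\contact}$ I would use the pointwise splitting $T_xM = \mathbb{R}\Reeb(x)\oplus\ker\contact_x$, writing an arbitrary field as $u = f\Reeb + v$ with $f=\contact(u)$ and $v$ a section of $\ker\contact$. Since $\iota_{\Reeb}d\contact = 0$ we get $\iota_u\,d\contact = \iota_v\,d\contact = \gamma(v)$, a covector that always annihilates $\Reeb$; hence the condition $\iota_u\,d\contact = -df$ of \eqref{Salphadef} can hold only if $df$ annihilates $\Reeb$, i.e. $\Reeb(f)\equiv 0$, and conversely Proposition \ref{gammaoperatordef} shows that when $\Reeb(f)\equiv 0$ there is a unique such $v$, namely $v = -\gamma^{-1}(df)$. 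Thus for each $f\in\Func_E(M,\mathbb{R})$ the field $u = S_{\contact}f = f\Reeb - \gamma^{-1}(df)$ is uniquely determined, depends linearly and smoothly on $f$, and satisfies $\contact(S_{\contact}f)=f$; in particular $f\mapsto S_{\contact}f$ is injective.

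Next I would invoke Cartan's formula $\Lie_u\contact = d(\iota_u\contact) + \iota_u\,d\contact = d(\contact(u)) + \iota_u\,d\contact$, valid for every $u$. Therefore $\Lie_u\contact = 0$ is equivalent to $\iota_u\,d\contact = -d(\contact(u))$, which, upon setting $f = \contact(u)$, is precisely the pair of conditions \eqref{Salphadef}; and by the previous paragraph such an $f$ automatically satisfies $\Reeb(f)\equiv 0$. This simultaneously gives the stated equivalence ($\Lie_u\contact = 0$ iff $u = S_{\contact}f$ with $f\in\Func_E(M,\mathbb{R})$) and the fact that $S_{\contact}$ carries $\Func_E(M,\mathbb{R})$ onto $\{u : \Lie_u\contact = 0\}$.

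To identify $T_{\id}\Diffexcontact$ with this space I would prove both inclusions. If $t\mapsto\eta_t$ is a smooth curve in $\Diffexcontact(M)$ with $\eta_0=\id$ and velocity $u = \tfrac{d}{dt}\big|_{t=0}\eta_t$, then differentiating $\eta_t^*\contact = \contact$ at $t=0$ yields $\Lie_u\contact = 0$, so every tangent vector has the asserted form. Conversely, given $f\in\Func_E(M,\mathbb{R})$, the field $u = S_{\contact}f$ satisfies $\Lie_u\contact = 0$; since $u$ is complete (e.g. because $M$ is compact) it has a flow $\eta_t$, and $\tfrac{d}{dt}\eta_t^*\contact = \eta_t^*\Lie_u\contact = 0$ forces $\eta_t^*\contact = \contact$ for all $t$, so $\eta_t$ is a curve in $\Diffexcontact(M)$ through the identity with velocity $u$. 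Hence the two sets of velocities coincide, and $S_{\contact}$ is an isomorphism from $\Func_E(M,\mathbb{R})$ onto $T_{\id}\Diffexcontact$.

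The computations are routine; the one step requiring genuine care is the well-definedness of $S_{\contact}$, where the nondegeneracy of $d\contact$ on $\ker\contact$ — equivalently the pointwise isomorphism $\gamma$ of Proposition \ref{gammaoperatordef} — is exactly what shows the constraint $\Reeb(f)\equiv 0$ to be both necessary and sufficient for a solution. In the Sobolev category one would further note that $S_{\contact}$ is bounded between the relevant $H^s$ spaces and that the flow construction remains inside the $H^s$ completion, but in the present $C^{\infty}$ setting the argument above is complete.
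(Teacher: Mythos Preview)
Your proof is correct and follows essentially the same approach as the paper: both use Cartan's formula $\Lie_u\contact = d(\contact(u)) + \iota_u d\contact$, set $f=\contact(u)$, observe that $\Reeb(f)=0$ is forced since $\iota_u d\contact$ annihilates $\Reeb$, and then invoke the pointwise isomorphism $\gamma$ of Proposition~\ref{gammaoperatordef} to solve uniquely for the $\ker\contact$-component $v=-\gamma^{-1}(df)$. Your version is in fact more complete, since you explicitly construct the flow of $S_{\contact}f$ to verify the reverse inclusion $\{S_{\contact}f\}\subset T_{\id}\Diffexcontact$, whereas the paper leaves this as ``straightforward.''
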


\begin{proof}
Since $\eta\in \Diffexcontact(M)$ if and only if $\eta^*\contact=\contact$, a tangent vector $u$ at the identity must satisfy $\Lie_u\contact=0$, or $d(\contact(u)) + \iota_ud\contact=0$. Letting $f=\contact(u)$, we have $\iota_ud\contact = -df$. Applying both sides to the Reeb field $\Reeb$, we have $\Reeb(f) = d\contact(\Reeb,u) = 0$. Since $\Reeb(f)=0$, we know $df$ is in the annihilator of $\Reeb$, and since
 $\gamma$ is an isomorphism from the null space of $\contact$ to the annihilator of $\Reeb$, we get a unique field $v$ for which $\contact(v)\equiv 0$ and $\gamma(v) = -df$. Then we must have $u=v+fE$ in order to satisfy both $\contact(u)=f$ and $\gamma(u)= -df$. The rest of the proof is straightforward.
\end{proof}

Note that any $C^{\infty}$ function $f\colon M\to \mathbb{R}$ satisfying $\Reeb(f)\equiv 0$ is constant on orbits of $E$ and thus may be viewed as a function $\tilde{f}\colon N\to \mathbb{R}$ where $N$ is the symplectic quotient. We can thus identify the space $\Func_{\Reeb}(M,\mathbb{R})$ from \eqref{functionEspace} with $\Func(N, \mathbb{R})$ whenever convenient. Also since $\omega$ is a symplectic form on $N$ we get an isomorphism $\omega^{\flat}:TN \to T^* N$ defined by
$\omega^{\flat}(v) = \iota_v \omega.$  We denote its inverse by $\omega^{\sharp}$.

Using $S_{\contact}$ we can define a type of Poisson bracket on $\Func_{\Reeb}(M,\mathbb{R}).$  We define
\begin{equation}\label{contactbracket}
\{f,g\}:= S_{\theta} f(g).
\end{equation}
Thus in Darboux coordinates we have 
\begin{equation}\label{ScontactDarboux}
S_{\contact}f = \sum_{k=1}^n \Big( 
-\frac{\partial f}{\partial y^k} \, \frac{\partial}{\partial x^k} + 
\frac{\partial f}{\partial x^k} \, \frac{\partial}{\partial y^k}\Big)
+ \Big( f - \sum_{k=1}^n x^k \,\frac{\partial f}{\partial x^k}\Big) \, \frac{\partial}{\partial z}
\end{equation}
and 
\begin{equation}\label{bracketDarboux} \{f,g\} = \sum_{k=1}^n \frac{\partial f}{\partial x^k} \, \frac{\partial g}{\partial y^k} - \frac{\partial f}{\partial y^k} \, \frac{\partial g}{\partial x^k}
\end{equation}
(since of course $\frac{\partial g}{\partial z}=0$ for $g\in \Func_{\Reeb}(M,\mathbb{R})$). 
%
%

\begin{proposition}\label{bracketisomorphism}
$S_{\theta}:\Func_{\Reeb}(M,\mathbb{R})\rightarrow
T_{\id}\Diffexcontact$ is an algebra isomorphism; that is, it takes Poisson brackets into Lie brackets.
\end{proposition}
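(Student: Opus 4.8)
The plan is to exploit the characterization of $T_{\id}\Diffexcontact$ from Proposition \ref{quantotangentprop}. That result already furnishes a linear isomorphism $S_{\contact}\colon \Func_{\Reeb}(M,\mathbb{R}) \to T_{\id}\Diffexcontact$, so the only thing left to prove is that $S_{\contact}$ carries the bracket \eqref{contactbracket} to the bracket of vector fields; the antisymmetry and Jacobi identity for $\{\cdot,\cdot\}$ will then be automatic consequences of the corresponding properties of $[\cdot,\cdot]$ together with the bijectivity of $S_{\contact}$. Fix $f,g \in \Func_{\Reeb}(M,\mathbb{R})$ and set $u = S_{\contact}f$, $v = S_{\contact}g$.

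The first step is to observe that $[u,v]$ lies again in $T_{\id}\Diffexcontact$. By Proposition \ref{quantotangentprop} this amounts to $\Lie_{[u,v]}\contact = 0$, which follows immediately from the identity $\Lie_{[u,v]} = \Lie_u\Lie_v - \Lie_v\Lie_u$ and the fact that $\Lie_u\contact = \Lie_v\contact = 0$. Hence $[u,v] = S_{\contact}h$ for a unique $h \in \Func_{\Reeb}(M,\mathbb{R})$, and by \eqref{Salphadef} we have $h = \contact([u,v])$; in particular $\{f,g\}$ will automatically turn out to lie in $\Func_{\Reeb}(M,\mathbb{R})$.

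The second step is to identify $\contact([u,v])$ with $\{f,g\}$. For this I would use the invariant formula for the exterior derivative, $d\contact(u,v) = u(\contact(v)) - v(\contact(u)) - \contact([u,v])$, and solve for $\contact([u,v])$. Substituting $\contact(u) = f$, $\contact(v) = g$, and $d\contact(u,v) = (\iota_u d\contact)(v) = -df(v) = -v(f)$, all of which come from \eqref{Salphadef}, the two terms $v(f)$ cancel and one obtains $\contact([u,v]) = u(g) = S_{\contact}f(g) = \{f,g\}$. Therefore $S_{\contact}\{f,g\} = [u,v] = [S_{\contact}f, S_{\contact}g]$, which is exactly the assertion, up to the sign convention relating the vector-field bracket to the Lie-algebra bracket of a diffeomorphism group (replacing $\{f,g\}$ by $-\{f,g\}$ in \eqref{contactbracket} if needed does not affect the claim that $S_{\contact}$ is an isomorphism).

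I do not expect a genuine obstacle: everything reduces to one application of Cartan's formula and the invariant expression for $d\contact$. The only delicate point is the bookkeeping of signs — both the $-df$ in \eqref{Salphadef} and the overall sign convention for Lie algebras of diffeomorphism groups — so I would also verify the identity $S_{\contact}\{f,g\} = [S_{\contact}f, S_{\contact}g]$ directly in Darboux coordinates using \eqref{ScontactDarboux} and \eqref{bracketDarboux} as a consistency check.
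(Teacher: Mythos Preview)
Your proof is correct and follows essentially the same route as the paper: both arguments apply the invariant formula $d\contact(u,v)=u(\contact(v))-v(\contact(u))-\contact([u,v])$ together with the defining relations $\contact(u)=f$ and $\iota_u d\contact=-df$ to compute $\contact([u,v])=\{f,g\}$. Your explicit verification that $[u,v]\in T_{\id}\Diffexcontact$ via $\Lie_{[u,v]}=\Lie_u\Lie_v-\Lie_v\Lie_u$ is a slight elaboration of the paper's appeal to surjectivity of $S_{\contact}$, but the core computation is identical.
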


\begin{proof} Let $u=S_{\theta}f$ and $v = S_{\theta}g.$  Then since
$S_{\theta}$ is surjective, there exists $h \in \Func_{\Reeb}(M,\mathbb{R})$ such that $S_{\theta}h = [u,v].$
Then $\theta(S_{\theta}h) = h$ and
\begin{eqnarray*}
\theta([S_{\theta}f, S_{\theta}g]) & = & -d\theta(S_{\theta}f, S_{\theta}g) + S_{\theta}f\big(\theta(S_{\theta}g)\big) - S_{\theta}g\big(\theta(S_{\theta}f)\big)\\
& = & -\iota_{S_{\theta}f} d\theta(S_{\theta}g)+ S_{\theta}f(g) -S_{\theta}g(f)\\
& = & df(S_{\theta}g) +\{f,g\} -df(S_{\theta}g) \\
& = & \{f,g\} \end{eqnarray*}
Thus $h =\{f,g\}$ so $ S_{\theta}(\{f,g\}) = [S_{\theta}f,S_{\theta}g]$.
\end{proof}

Note that $\pi_* (S_{\theta} f) = -\omega^{\sharp}(d\tilde{f})$, and the identification of $\Func_{\Reeb}(M,\mathbb{R})$ with $\Func(N, \mathbb{R})$ takes the Poisson bracket of $\Func_{\Reeb}(M,\mathbb{R})$ into the Poisson bracket of $\Func(N, \mathbb{R})$.  Also since the standard bracket has the property $$\int_N \{\tilde{f},\tilde{g}\} \tilde{h} \, d\nu = \int_N \{\tilde{h},\tilde{f}\}\tilde{g} \,d\nu $$
where $\nu$ is the symplectic volume form on $M$,
we get $$\int_M \{f,g\}h \, d\mu= \int_M \{h,f\}g\,d\mu$$ as well, where $\mu$ is the contact volume form.  
This fact will be needed for the proof of Theorem \ref{localgeodesic}.

If we denote the operator $u\mapsto \Lie_u\contact$ by $L_{\contact}$, then by construction we have $L_{\contact}\circ S_{\contact}=0$. We can then view the operators $S_{\contact}$ and $L_{\contact}$ as forming a short exact sequence if we choose their domains and ranges correctly. This will be important when we discuss smoothness of the geodesic equation, so we record the formal result.

\begin{proposition}\label{exactsequenceprop}
Let $\Func_{\Reeb}(M,\mathbb{R})$ denote the space of smooth functions $f\colon M\to \mathbb{R}$ such that $\Reeb(f)\equiv 0$ as in \eqref{functionEspace}. Let $\betaimage(M)$ denote the space of one-forms $\beta$ on $M$ such that  $\iota_{\Reeb}\beta=0,$ $\iota_{\Reeb}d\beta=0$ and $\contact\wedge d\beta\wedge (d\contact)^{n-1}=0$. Finally, let $T_{\id}\Diffximu(M)$ denote the tangent space of $\Diffximu(M)$, which consists of vector fields $u$ such  that $\diver{u}=0$ and $[\Reeb,u]=0$. If $L_{\contact}u = \Lie_u\contact$ and $S_{\contact}$ is defined as in \eqref{Salphadef}, then 
\begin{equation}\label{shortexactSalpha}
0 \to \Func_{\Reeb}(M,\mathbb{R})\stackrel{S_\theta}{ \to} T_{\id}\Diffximu(M) \stackrel{L_\theta}{\to}\betaimage(M) \to 0
\end{equation}
is a short exact sequence, i.e., the image of every map is the null space of the next map.
\end{proposition}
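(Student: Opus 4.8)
The plan is to verify the three exactness conditions in turn: injectivity of $S_\theta$, exactness at $T_{\id}\Diffximu(M)$, and surjectivity of $L_\theta$ onto $\betaimage(M)$; the last of these is where essentially all the work lies. For the first claim, if $S_\theta f = 0$ then by \eqref{Salphadef} we have $f = \contact(S_\theta f) = 0$, so $S_\theta$ is injective. Before any of this one must check that the maps land where claimed: that $S_\theta$ takes $\Func_\Reeb(M,\mathbb R)$ into $T_{\id}\Diffximu(M)$, and that $L_\theta$ takes $T_{\id}\Diffximu(M)$ into $\betaimage(M)$. For the former, Proposition \ref{quantotangentprop} already gives $\Lie_{S_\theta f}\contact = 0$, hence $\Lie_{S_\theta f}(d\contact) = 0$, and since $\mu$ is a constant multiple of $\contact\wedge(d\contact)^n$ we get $\diver(S_\theta f) = 0$; and $\Lie_{S_\theta f}\contact = 0$ together with $\Lie_{S_\theta f}(d\contact)=0$ forces $\Lie_{S_\theta f}\Reeb = [S_\theta f,\Reeb] = 0$ by the uniqueness characterization of the Reeb field (as in Proposition \ref{reebpreserving}). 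For the latter, if $[\Reeb,u]=0$ and $\diver u = 0$, set $\beta = L_\theta u = \Lie_u\contact = d(\contact(u)) + \iota_u d\contact$. Then $\iota_\Reeb\beta = \Lie_u(\iota_\Reeb\contact) - \iota_{[u,\Reeb]}\contact = 0$, and $d\beta = \Lie_u(d\contact)$, so $\iota_\Reeb d\beta = \Lie_u(\iota_\Reeb d\contact) - \iota_{[u,\Reeb]}d\contact = 0$. Finally $\contact\wedge d\beta\wedge(d\contact)^{n-1} = \contact\wedge \Lie_u(d\contact)\wedge (d\contact)^{n-1} = \tfrac1n\,\contact\wedge\Lie_u\big((d\contact)^n\big)$, and since $\contact\wedge(d\contact)^n$ is a top form that is $\Lie_u$-invariant (as $\diver u=0$) while $\Lie_u\contact\wedge(d\contact)^n$ contributes a separate piece, a short computation using $\Lie_u(\contact\wedge(d\contact)^n)=0$ and $\contact\wedge\contact=0$ shows this wedge vanishes. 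So $L_\theta u\in\betaimage(M)$.

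Exactness at the middle term: we must show $\ker L_\theta = \operatorname{im} S_\theta$. The inclusion $\supseteq$ is the identity $L_\theta\circ S_\theta = 0$ recorded in the text before the proposition. For $\subseteq$, suppose $u\in T_{\id}\Diffximu(M)$ with $\Lie_u\contact = 0$. Then by the converse direction of Proposition \ref{quantotangentprop}, $u = S_\theta f$ for some $f$ with $\Reeb(f)\equiv 0$, which is exactly what we want. (Note the divergence-free and Reeb-commuting conditions on $u$ are automatic consequences of $\Lie_u\contact=0$, so no compatibility issue arises here.)

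The main obstacle is surjectivity of $L_\theta\colon T_{\id}\Diffximu(M)\to\betaimage(M)$: given $\beta$ with $\iota_\Reeb\beta=0$, $\iota_\Reeb d\beta=0$, $\contact\wedge d\beta\wedge(d\contact)^{n-1}=0$, we must produce a vector field $u$ with $\diver u=0$, $[\Reeb,u]=0$, and $\Lie_u\contact=\beta$. The natural approach is to split $u = v + g\Reeb$ with $\contact(v)=0$, so that $\Lie_u\contact = dg + \iota_v d\contact$ (using $\iota_\Reeb d\contact=0$); matching with $\beta$ requires $\iota_v d\contact = \beta - dg$. Applying $\iota_\Reeb$ forces $\Reeb(g) = 0$, i.e. $g\in\Func_\Reeb$. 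Since $\gamma$ is a pointwise isomorphism from $\ker\contact$ onto $\operatorname{ann}\Reeb$ and both $\beta$ and $dg$ annihilate $\Reeb$, for each choice of $g\in\Func_\Reeb$ there is a unique $v=v_g$ in $\ker\contact$ with $\gamma(v_g) = \beta - dg$; this automatically gives $\Lie_u\contact = \beta$. The remaining constraints $[\Reeb,u]=0$ and $\diver u=0$ must be arranged by choosing $g$ correctly. For $[\Reeb,u]=0$: the conditions $\iota_\Reeb d\beta=0$ together with $\Reeb(g)=0$ should force $[\Reeb,v_g]=0$ and hence $[\Reeb,u]=0$, via $\Lie_\Reeb(\iota_{v_g}d\contact) = \iota_{[\Reeb,v_g]}d\contact = \Lie_\Reeb(\beta-dg) = \iota_\Reeb d\beta = 0$ and injectivity of $\gamma$. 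The genuinely delicate point is $\diver u = 0$. Computing, $\diver u\,\mu = \Lie_u\mu = c\,\Lie_u(\contact\wedge(d\contact)^n) = c\,(\Lie_u\contact)\wedge(d\contact)^n + nc\,\contact\wedge\Lie_u(d\contact)\wedge(d\contact)^{n-1}$; substituting $\Lie_u\contact=\beta$ and $\Lie_u(d\contact)=d\beta$ and using $\beta\wedge(d\contact)^n$ and the hypothesis $\contact\wedge d\beta\wedge(d\contact)^{n-1}=0$, this should reduce $\diver u$ to a first-order linear expression in $g$ of the form $\{g,\,\cdot\,\} + (\text{terms linear in }\beta)$ — in fact to an equation of the type $\Laplacianbar g = (\text{functional of }\beta)$ for a suitable second-order operator $\Laplacianbar$ on $\Func_\Reeb$, after substituting the formula for $v_g$. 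The plan is to show this operator is (formally) self-adjoint and elliptic along the symplectic leaves — equivalently, descends to $\pm$ the symplectic Laplacian-type operator on $\Func(N,\mathbb R)$ under the identification $\Func_\Reeb(M,\mathbb R)\cong\Func(N,\mathbb R)$ — and that the right-hand side is $L^2$-orthogonal to the constants (its kernel), using the integration-by-parts identity $\int_M\{f,g\}h\,d\mu = \int_M\{h,f\}g\,d\mu$ displayed above. Solvability of $\Laplacianbar g = (\text{data})$ then follows from Hodge theory on $N$, completing the surjectivity. I expect the bookkeeping in reducing $\diver u=0$ to this scalar PDE to be the most calculation-heavy step, and the identification of the relevant operator with an honest elliptic operator on $N$ to be the conceptual crux.
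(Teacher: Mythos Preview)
Your treatment of injectivity, of exactness at the middle term, and of the verification that $S_\contact$ and $L_\contact$ land in the stated spaces is fine and matches the paper. The divergence, and the only substantive issue, is in the surjectivity argument.

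You introduce a free function $g\in\Func_\Reeb$ in the ansatz $u=v_g+g\Reeb$ and then plan to choose $g$ by solving an elliptic PDE coming from the constraint $\diver u=0$. But this constraint is vacuous: once you have arranged $\Lie_u\contact=\beta$ (which, as you correctly observe, holds for \emph{every} choice of $g$), the divergence is already zero. Indeed
\[
\Lie_u\big(\contact\wedge(d\contact)^n\big)=\beta\wedge(d\contact)^n+n\,\contact\wedge d\beta\wedge(d\contact)^{n-1},
\]
and both terms vanish: the second by the defining hypothesis on $\betaimage(M)$, and the first because $\iota_\Reeb\beta=0$ and $\iota_\Reeb d\contact=0$ force $\iota_\Reeb\big(\beta\wedge(d\contact)^n\big)=0$, hence this top form is zero. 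So the ``equation of the type $\Laplacianbar g=(\text{functional of }\beta)$'' you anticipate is just $0=0$, and no Hodge theory is needed.

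The paper exploits exactly this: it simply takes $g=0$, i.e.\ the unique $u$ with $\contact(u)=0$ and $\gamma(u)=\beta$, and checks directly that $[\Reeb,u]=0$ (same argument as yours) and $\diver u=0$ (the two-line computation above). Your route would ultimately succeed, but only because the PDE you set out to solve collapses; the insight you are missing is that the third condition defining $\betaimage(M)$ is precisely what makes the divergence condition automatic.
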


\begin{proof}
That $S_{\contact}$ is one-to-one from $\Func_{\Reeb}(M,\mathbb{R})$ is easy to check. The fact that $S_{\contact}$ maps into $T_{\id}\Diffximu$ follows from 
Proposition \ref{reebpreserving}, as well as the fact that the volume form $\mu$ is a multiple of $\contact\wedge (d\contact)^n$.
To show exactness at $T_{\id}\Diffximu(M)$ we pick $u \in T_{\id}\Diffximu(M).$
$L_\theta u=0$ if and only if $i_u d\theta = -d\theta(u)$.  Letting $f=\theta(u),$
we find that $u = S_\theta f$ so exactness here follows.   

Finally the fact that $L_{\contact}$ maps $T_{\id}\Diffximu(M)$ onto $\betaimage(M)$ follows from the following computation. Let $\beta\in \betaimage(M)$, and define $u$ to be the unique vector field satisfying $\contact(u)=0$ and $\gamma(u)=\beta$. Then $L_{\contact}(u) = \beta$, and we just need to check that $u\in T_{\id}\Diffximu(M)$. We have  $$\Lie_{[\Reeb, u]}\contact = \Lie_{\Reeb}\Lie_u\contact - \Lie_u\Lie_{\Reeb}\contact = \Lie_{\Reeb}\beta = 0,$$
and since $\contact([\Reeb,u]) = \Reeb(\contact(u)) - u(\contact(\Reeb)) - d\contact(\Reeb,u) = 0$, we conclude that $\gamma([\Reeb,u])=0$ as well. Since the only vector field $v$ satisfying $\contact(v)=0$ and $\gamma(v)=0$ is zero, we conclude $[\Reeb,u]=0$. The fact that $u$ is divergence-free follows from the computation
$$ \Lie_u\big(\contact\wedge (d\contact)^n\big) = \Lie_u\contact \wedge (d\contact)^n + n\contact \wedge d\Lie_u\contact \wedge (d\contact)^{n-1} = 0$$
since $\Lie_u\contact=\beta$.
\end{proof}

\section{Sobolev manifold structures}

In this section we discuss the Riemannian metric on $\Diffexcontact$ and its geodesic equation. We will show that $\Diffexcontact$ is a smooth submanifold of $\Diffximu$, and that the geodesic equation is a smooth ordinary differential equation on $\Diffexcontact$.
The results of this section generally do not depend on the choice of Riemannian metric on $M$; we can always arrange things so that there is some metric on $M$ which makes $\Reeb$ a Killing field, by the regularity assumption on $\Reeb$.

To discuss smoothness, we extend our spaces of $C^{\infty}$ maps to Sobolev $H^s$ maps, where $s$ is an integer strictly larger than $\dim{M}/2+1 = n+\frac{3}{2}$ (in order to ensure that any such map is $C^1$). These structures have previously been studied in this context by Omori~\cite{omori}, Ratiu-Schmid~\cite{RS}, and Smolentsev~\cite{smolentsev}. Our interest is in the quantomorphism group $\Diffexcontact^s(M)$ as a submanifold of the volumorphism group with symmetry, $\Diffximu^s(M)$, which in turn is a submanifold of $\Diffmu^s(M)$ (which is known to be a smooth submanifold of $\Diff^s(M)$ by \cite{EM}). 

We will be dealing with many Hilbert manifolds, all of which are topological subgroups of the full diffeomorphism group $\Diff^s(M)$, and we would like to show they are all submanifolds of this group. This is not automatic even for nice diffeomorphism groups; for example the contactomorphism group $\Diff_{\contact}^s(M)$ is a $C^{\infty}$ Hilbert manifold but not a $C^{\infty}$ Hilbert submanifold of $\Diff^s(M)$ (see Omori~\cite{omori}). We will generally prove our subgroups are submanifolds either by constructing an explicit coordinate chart or by using the implicit function theorem. In order to avoid duplication, we will do this only in the simplest cases, and derive the other cases from it. 

\begin{lemma}\label{3submanifold}
Suppose $L$ and $M$ are both $C^{\infty}$ Hilbert submanifolds of a $C^{\infty}$ Hilbert manifold $N$, and that $L$ is a subset of $M$. Then $L$ is a $C^{\infty}$ Hilbert submanifold of $M$.
\end{lemma}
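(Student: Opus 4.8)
The plan is to prove that the set-theoretic inclusion $j\colon L\hookrightarrow M$ is a smooth embedding, so that $L=j(L)$ is a $C^{\infty}$ Hilbert submanifold of $M$ by the standard characterization of embedded submanifolds. Write $i_L\colon L\hookrightarrow N$ and $i_M\colon M\hookrightarrow N$ for the two given inclusions, so that $i_L=i_M\circ j$ as maps of sets.

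First I would check that $j$ is smooth. Because $M$ is an embedded submanifold of $N$, a map whose image lies in $M$ is smooth into $M$ exactly when it is smooth into $N$ --- this is immediate in a submanifold chart of $M$ in $N$ --- and $i_L$ is such a map, so $j$ is smooth. The map $j$ is injective, and $Dj_p$ is injective for each $p\in L$ since $Di_L|_p=Di_M|_p\circ Dj_p$ with $Di_L|_p$ injective. Identifying $T_pL$ and $T_pM$ with their images in $T_pN$, the image of $Dj_p$ is just $T_pL$ regarded as a subspace of $T_pM$; it is closed in $T_pN$, hence closed in $T_pM$ (which carries the subspace topology), and closed subspaces of a Hilbert space are automatically complemented, so $j$ is an immersion with split image. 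Finally $j$ is a topological embedding: the manifold topology on $L$ and on $M$ are the subspace topologies inherited from $N$, and the subspace topology is transitive, so the manifold topology on $L$ equals the subspace topology it inherits from $M$. A smooth, injective immersion with split image that is a homeomorphism onto its image is an embedded submanifold, which completes the argument. (If one prefers to avoid invoking that characterization, one can instead build the submanifold chart explicitly: near $p\in L$, pass to a chart of $N$ straightening $L$; the image of $M$ is then an embedded submanifold of an open set in a Hilbert space whose tangent space at the origin contains the linear slice along which $L$ lies, so by the implicit function theorem it is locally the graph of a smooth map over a complemented subspace $G\times S$, the graph projection is a chart of $M$, and in that chart $L$ becomes exactly the slice $G\times\{0\}$.)

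There is no deep obstacle here; the point requiring genuine care is respecting the definitions in the Hilbert-manifold category --- in particular verifying that $T_pL$ is closed as a subspace of $T_pM$ and not merely of $T_pN$, and, in the explicit-chart route, checking that the subspace $G\times S$ is complemented in the model space so that the implicit function theorem genuinely applies and the resulting graph map is a legitimate chart. These splitting issues are exactly what would fail for general Banach manifolds; in the Hilbert setting they are disposed of by taking orthogonal complements.
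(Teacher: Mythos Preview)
Your proof is correct and follows essentially the same route as the paper: both arguments establish that the inclusion $L\hookrightarrow M$ is a smooth immersion by invoking the universal mapping property of the embedded submanifold $M\subset N$ to get smoothness, and then the chain rule on $i_L=i_M\circ j$ to get injectivity of the differential. You are in fact more thorough than the paper, which simply asserts that ``a subset is locally a smooth submanifold if and only if the inclusion map is a smooth immersion'' and leaves implicit the verification that the image of $Dj_p$ is closed and complemented and that $j$ is a topological embedding---points you address explicitly and correctly.
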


\begin{proof}
For general properties of Hilbert manifolds, see \cite{AMR} or \cite{lang}. We will use the fact that a subset is locally a smooth submanifold if and only if the inclusion map is a smooth immersion. Let $i\colon L\to M$, $j\colon M\to N$, and $k\colon L\to N$ be the inclusions. We know $j$ and $k$ are smooth immersions and want to prove that $i$ is as well. Clearly $j\circ i = k$, and if we knew that $i$ were smooth, the Chain Rule would imply that it would have to be an immersion since $Tj\circ Ti = Tk$, with $Tj$ and $Tk$ both injective. 

Smoothness of $i$ follows from the ``universal mapping'' property of submanifolds; see Lang~\cite{lang}. More explicitly, we get smoothness using coordinate charts.  Since $M$ is a smooth submanifold of $N$, for every $p\in L\subset M$ there is a chart $\psi\colon W\to G$ from a neighborhood $W$ of $p$ in $N$ to a Hilbert space $G$, such that for some closed subspace $F\subset G$ we have 
$$ \psi[W\cap M] = \psi[W] \cap F\times \{0\},$$
where $\{0\}$ denotes the zero element of the orthogonal complement $F^{\perp}$. The map $\overline{\psi} = \pi_F \circ \psi\vert_{W\cap M} \colon W\cap M\to F$ given by restricting the domain and projecting the range onto $F$ is a coordinate chart on $M$. 

Now since $\psi$ is a coordinate chart on $N$, we know $\psi\circ k\colon L\to G$ is $C^{\infty}$. Thus $\pi_F\circ\psi\circ k\colon L\to F$ is also $C^{\infty}$. But $\pi_F\circ\psi\circ k = \overline{\psi}\circ i$, and since $\overline{\psi}\circ i$ is smooth for a coordinate chart $\overline{\psi}$, the inclusion map $i$ must be smooth by definition.
\end{proof}

To begin we consider the set 
\begin{equation}\label{diffxidef}
\Diffxi^s(M) = \{\eta\in\Diff^s(M) \, \vert \, \eta_*\Reeb = \Reeb \}
\end{equation}
of $H^s$ diffeomorphisms of $M$ which preserve the Reeb field (or equivalently, the set of $H^s$ diffeomorphisms which commute with the flow of the Reeb field). We will prove this is a submanifold of $\Diff^s(M)$, a special case of a result due originally to Omori~\cite{omori}; we provide the proof to make the paper somewhat more self-contained.

\begin{theorem}\label{diffxisubmfd}
Let $\Reeb$ be a vector field on $M$ with closed orbits all of the same period. The set $\Diffxi^s(M)$ of $H^s$ diffeomorphisms preserving $\Reeb$ (or commuting with its flow) is a $C^{\infty}$ submanifold of $\Diff^s(M)$. 
\end{theorem}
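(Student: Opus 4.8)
The plan is to construct explicit coordinate charts on $\Diffxi^s(M)$ near the identity (and then translate them around by right multiplication, which is smooth) using the regularity of $\Reeb$. Since $\Reeb$ has all orbits closed of the same period, we may rescale time so the period is $1$; let $\varphi_t$ denote the resulting flow, so that $\varphi_1 = \id$ and the $\varphi_t$ generate a circle action on $M$. The key structural fact is that a diffeomorphism $\eta$ commutes with $\varphi_t$ for all $t$ precisely when $\eta$ descends to the quotient orbifold/manifold in an appropriate sense; more usefully for charts, $\eta\circ\varphi_t = \varphi_t\circ\eta$ means that $\eta$ maps each $\Reeb$-orbit to a $\Reeb$-orbit in a phase-preserving way. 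I would encode a diffeomorphism near the identity as $\eta = \varphi_{g}\circ\exp(w)$, or more precisely use a tubular-neighborhood/exponential chart adapted to the $S^1$ bundle: fix an $S^1$-invariant Riemannian metric on $M$ (which exists by averaging over the circle action, using regularity of $\Reeb$), so the Riemannian exponential map intertwines the $\varphi_t$-action on $TM$ with that on $M$.

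Concretely, the chart I would build sends a vector field $u$ near $0$ in $T_{\id}\Diff^s(M) = H^s(M;TM)$ to $\exp\circ\, u$ (the standard Ebin--Marsden chart on $\Diff^s(M)$ from \cite{EM}), and I claim that under this chart $\Diffxi^s(M)$ corresponds exactly to the closed linear subspace $\{u \in H^s(M;TM) : u \text{ is } \varphi_t\text{-invariant, i.e. } (\varphi_t)_* u = u \text{ for all } t\}$, equivalently $[\Reeb,u]=0$. The inclusion "$\supseteq$" is the easy direction: if $u$ is $\varphi_t$-invariant then because $\exp$ was built from an $S^1$-invariant metric we get $\exp\circ u$ commuting with each $\varphi_t$, hence $\exp\circ u \in \Diffxi^s(M)$. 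The inclusion "$\subseteq$" requires showing that if $\exp\circ u$ commutes with the flow then $u$ itself is invariant; this follows from uniqueness in the exponential chart — $(\varphi_t)_*(\exp\circ u) = \exp\circ((\varphi_t)_* u)$ by equivariance of $\exp$, and if this equals $\exp\circ u$ for all $t$ then $(\varphi_t)_*u = u$ since $\exp$ is a diffeomorphism onto a neighborhood. One must also check that $u \mapsto (\varphi_t)_*u$ depends smoothly enough to make the fixed-point subspace closed, which is clear since it is the kernel of the continuous linear operator $u \mapsto \mathcal{L}_{\Reeb} u = [\Reeb, u]$ on $H^s$ (note $[\Reeb,\cdot]$ is bounded $H^s \to H^{s-1}$, but its kernel in $H^s$ is still closed; alternatively it is the kernel of $u \mapsto u - \int_0^1 (\varphi_t)_*u \, dt$, which is bounded $H^s\to H^s$).

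From there, Lemma \ref{3submanifold} does not directly apply since we are proving the first case, so instead I finish by the standard argument: right translation $R_\eta\colon \zeta \mapsto \zeta\circ\eta$ is a $C^\infty$ diffeomorphism of $\Diff^s(M)$ preserving $\Diffxi^s(M)$ (as it is a subgroup), so translating the chart at $\id$ gives charts everywhere, and in each the subgroup is cut out by the same closed subspace; this exhibits $\Diffxi^s(M)$ as a $C^\infty$ submanifold, and a topological subgroup that is a submanifold and for which the chart structure is right-invariant is automatically a $C^\infty$ Hilbert-Lie group (or at least a $C^\infty$ submanifold, which is all that is claimed). The main obstacle I anticipate is the "$\subseteq$" direction and the careful construction of the $S^1$-equivariant exponential chart: one has to be sure the averaged metric is genuinely smooth and invariant, that its exponential map is equivariant (true because isometries commute with $\exp$ and each $\varphi_t$ is an isometry for the averaged metric), and that the equivariant chart remains a legitimate chart of $\Diff^s(M)$ — i.e. the Ebin--Marsden construction goes through verbatim with any smooth metric on $M$, which it does. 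A secondary technical point worth a sentence is verifying that $\varphi_t$-invariance of the $H^s$ vector field $u$ is equivalent to the pointwise PDE condition $[\Reeb,u]=0$ in the $H^s$ sense, so that the tangent space description matches the one used later (Proposition \ref{exactsequenceprop}); this is routine since $\Reeb$ is smooth.
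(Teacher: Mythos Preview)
Your proposal is correct and follows essentially the same route as the paper: average the metric over the $\Reeb$-flow so that each $\varphi_t$ is an isometry, use the standard Ebin--Marsden exponential chart, and verify (using equivariance of $\exp$ under isometries for one inclusion and injectivity of $\exp$ on a small ball for the other) that the chart identifies $\Diffxi^s(M)$ with the closed subspace $\{u:[\Reeb,u]=0\}$. The only cosmetic difference is that the paper carries out the computation directly at an arbitrary $\eta\in\Diffxi^s(M)$ rather than at the identity followed by right translation, but the content is the same.
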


\begin{proof}
We may assume without loss of generality that $\Reeb$ is a Killing field of some Riemannian metric on $M$; we can simply take an arbitrary metric and average it over a period of the flow of $\Reeb$. 

The coordinate charts on $\Diff^s(M)$ are defined in terms of the Riemannian metric on $M$ as follows: for any $\eta\in \Diff^s$ we consider the linear space $H^s_{\eta}(TM)$ of $H^s$ vector fields $V$ over $\eta$, i.e., such that $V(x)\in T_{\eta(x)}M$ for every $x\in M$. Then using the Riemannian exponential map $\exp$ on $M$, we define an exponential map $\Exp_{\eta}\colon H^s_{\eta}(TM)\to \Diff^s$ by $ \Exp_{\eta}(V) = x\mapsto \exp_{\eta(x)}(V(x))$ (i.e., for each $x$ we follow the geodesic starting at position $\eta(x)$ with velocity $V(x)$ for time one). If the $H^s$ norm of $V$ is sufficiently small, then by the Sobolev embedding theorem the $C^0$ norm of $V$ will be smaller than the injectivity radius of $M$, and thus $\Exp_{\eta}$ is invertible on a small neighborhood $\Omega$ of zero in $H^s_{\eta}(TM)$; its inverse is the desired chart.

To prove $\Diffxi^s$ is a submanifold, it is sufficient to show that in these charts, for any $\eta\in \Diffxi^s(M)$, we have 
\begin{equation}\label{submanifoldisometrycondition}
\Exp_{\eta}\big[\Omega \cap K^s_{\eta}\big] = \Exp_{\eta}[\Omega] \cap \Diffxi^s
\end{equation}
for some closed subspace $K^s_{\eta}$ of $H^s_{\eta}(TM)$. Set 
$$K^s_{\eta} = \{ v\circ\eta \, \vert \, [\Reeb, v] = 0\} = R_{\eta}[K^s_{\id}],$$
where $R_{\eta}$ is the right-translation. 
Since $K^s_{\id}$ is the null-space of the continuous map $v\mapsto [\Reeb, v]$ from $H^s$ vector fields to $H^{s-1}$ vector fields, it is a closed subspace of $H^s_{\id}(TM)$; since the right translation $R_{\eta}$ is an isomorphism, each space $K^s_{\eta}$ is closed in $H^s_{\eta}(TM)$.

Let $\zeta_{\tau}$ denote the flow of $\Reeb$;
since $\Reeb$ is a Killing field, every $\zeta_{\tau}$ is an isometry. Since isometries preserve geodesics we have, for any $\eta\in \Diff^s$ and $V\in H^s_{\eta}(TM)$, that 
\begin{equation}\label{preservegeodesics}
\exp_{\zeta_{\tau}(\eta(x))}\big((\zeta_{\tau})_*V(x)\big) = \zeta_{\tau}\Big(\exp_{\eta(x)}(V(x))\Big)
\end{equation} 
for all $x\in M$ and $\tau\in \mathbb{R}$.
Now if $V\in K^s_{\eta}$, then $V=v\circ\eta$ for some $v\in K^s_{\id}$, and since $[\Reeb, v]=0$, the flow $\zeta_{\tau}$ of $\Reeb$ preserves $v$, i.e., we have $(\zeta_{\tau})_*v(x) = v\big(\zeta_{\tau}(x))$ for all $x\in M$ and all $\tau\in \mathbb{R}$. We conclude that $(\zeta_{\tau})_*V(x) = v\big( \zeta_{\tau}(\eta(x))\big)$. Combining with \eqref{preservegeodesics} we obtain 
\begin{equation}\label{commutingisometry}
\exp_{\zeta_{\tau}(\eta(x))}\big(v\big(\zeta_{\tau}(\eta(x))\big)\big) = \zeta_{\tau}\Big(\exp_{\eta(x)}\big(v(\eta(x))\big)\Big)
\end{equation}
for any $\eta\in \Diffxi^s$ and $v\in K^s_{\id}$. 

Since $\eta\in \Diffxi^s$, we know that $\eta\circ\zeta_{\tau} = \zeta_{\tau}\circ\eta$, so that \eqref{commutingisometry}
becomes 
$$ \Exp_{\eta}(V)\circ\zeta_{\tau} = \Exp_{\eta\circ\zeta_{\tau}}(V\circ\zeta_{\tau}) = \zeta_{\tau}\circ \Exp_{\eta}(V)$$
for any $\tau\in\mathbb{R}$. Since $\sigma:=\Exp_{\eta}(V)$ commutes with the flow $\zeta_{\tau}$ of $\Reeb$, differentiating with respect to $\tau$ at $\tau=0$ gives $\sigma_*\Reeb = \Reeb$, and we conclude that $\Exp_{\eta}$ actually maps $\Omega\cap K^s_{\eta}$ \emph{into} $\Diffxi^s$ (which proves that the left side of \eqref{submanifoldisometrycondition} is a subset of the right side). 

To prove the opposite inclusion, let $\sigma$ be any diffeomorphism in $\Exp_{\eta}[\Omega] \cap \Diffxi^s$; we want to show that $\sigma = \Exp_{\eta}(V)$ for some $V\in \Omega \cap K^s_{\eta}$. We already know that $\Exp_{\eta}$ is a diffeomorphism on $\Omega$, so that there is a unique $V\in \Omega$ with $\Exp_{\eta}(V)=\sigma$; thus we just need to show that $V\in K^s_{\eta}$, i.e., that $V=v\circ\eta$ for some $v$ with $[\Reeb, v]=0$. 

Since $\sigma \in \Diffxi^s$, we know that $\sigma\circ \zeta_{\tau}=\zeta_{\tau}\circ\sigma$; hence for each $x\in M$ we have $$
\zeta_{\tau}\big( \exp_{\eta(x)}(V(x)) \big) = \exp_{\eta(\zeta_{\tau}(x))}\big(V(\zeta_{\tau}(x))\big) =
\exp_{\zeta_{\tau}(\eta(x))}\big(V(\zeta_{\tau}(x))\big),
$$
using the fact that $\eta\in \Diffxi^s$ as well. Since isometries preserve geodesics, we also know that 
$$ \zeta_{\tau}\big(\exp_{\eta(x)}(V(x))\big) = \exp_{\zeta_{\tau}(\eta(x))}\big((\zeta_{\tau})_*V(x)\big).$$
Now by our construction of $\Omega$, we know the supremum norm of $V$ is smaller than the injectivity radius of $M$, so that the minimizing geodesic between $\eta(x)$ and $\sigma(x)$ is \emph{unique} for each $x$. Since both $V(\zeta_{\tau}(x))$ and $(\zeta_{\tau})_*V(x)$ are initial velocity vectors for the geodesic at $\zeta_{\tau}(\eta(x))$ which at time one reaches $\zeta_{\tau}(\gamma(x))$, they must be equal for all sufficiently small $\tau$. Since $V=v\circ\eta$, we conclude that $v\big(\zeta_{\tau}(\eta(x))\big) = (\zeta_{\tau})_*v(\eta(x))$ for all $x$, i.e., that $v\big(\zeta_{\tau}(y)\big) = (\zeta_{\tau})_*v(y)$ for all $y\in M$. Now differentiating with respect to $\tau$ at $\tau=0$, we conclude that $[\Reeb, v]=0$ as desired.
\end{proof}

We now consider the set of $H^s$ diffeomorphisms $\Diffximu^s(M)$ preserving both the Reeb field $\Reeb$ and some volume element $\mu$. For this to be interesting, we want the Reeb field itself to preserve the volume element, so we assume that $\Reeb$ is divergence-free. Of course this will be the case if $E$ is Killing. We will prove that $\Diffximu^s(M)$ is a $C^{\infty}$ submanifold of $\Diffxi^s(M)$; this is proved in Smolentsev~\cite{smolentsev_vectorfield} using a different technique.

\begin{theorem}\label{diffximusubmanifold}
Let $M$ be a compact manifold. Suppose $\Reeb$ is a smooth vector field with closed orbits all of the same period, and let $\mu$ be a volume form invariant under the flow of $\Reeb$ (so that $\Reeb$ is divergence-free). Let $\Diffximu^s(M)$ denote the set of $H^s$ diffeomorphisms $\eta$ such that $\eta_*\Reeb=\Reeb$ and $\eta^*\mu = \mu$. Then $\Diffximu^s$ is a smooth submanifold of $\Diffxi^s$. 
\end{theorem}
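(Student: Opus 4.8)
The plan is to realize $\Diffximu^s$ locally as the zero set of a submersion built from the volume condition, working entirely within the ambient manifold $\Diffxi^s$ whose submanifold structure was established in Theorem~\ref{diffxisubmfd}. Concretely, fix $\eta\in\Diffximu^s$ and recall the chart $\Exp_\eta$ on $\Diffxi^s$ whose domain is an open neighborhood $\Omega\cap K^s_\eta$ of zero in $K^s_\eta=\{v\circ\eta:[\Reeb,v]=0\}$. For $V=v\circ\eta$ in this domain, the diffeomorphism $\sigma=\Exp_\eta(V)$ lies in $\Diffximu^s$ precisely when $\sigma^*\mu=\mu$; equivalently, writing $\sigma^*\mu = \rho_V\,\mu$ for a positive $H^s$ density $\rho_V$, we need $\rho_V\equiv 1$. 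One checks that $V\mapsto \rho_V$ is a smooth map from $\Omega\cap K^s_\eta$ into the Hilbert space of $H^s$ functions on $M$; moreover, since $\sigma$ already commutes with the flow $\zeta_\tau$ of $\Reeb$, the density $\rho_V$ satisfies $\zeta_\tau^*\rho_V=\rho_V$, i.e.\ $\Reeb(\rho_V)=0$, so in fact $\rho_V$ lies in the closed subspace $\Func_\Reeb^{s}(M,\mathbb{R})$ of $H^s$ functions annihilated by $\Reeb$. Thus the natural ``constraint map'' is $\Phi\colon\Omega\cap K^s_\eta\to \Func_\Reeb^{s-1}(M,\mathbb{R})$, $\Phi(V)=\rho_V-1$ (or, better for the index count, $\Phi(V)=\log\rho_V$ or $\Phi(V)=\rho_V-1$ composed with integration-adjusted bookkeeping), and $\Diffximu^s$ near $\eta$ is $\Phi^{-1}(0)$.

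The main work is to show $\Phi$ is a submersion at $0$ with split kernel, which reduces to computing $d\Phi_0$ and producing a right inverse. Differentiating $\sigma_t^*\mu=\rho_{tV}\mu$ in $t$ at $t=0$ with $\sigma_t=\Exp_\eta(tV)$, and using that the derivative of $\Exp_\eta$ at $0$ is essentially right translation by $\eta$, gives $d\Phi_0(v\circ\eta) = \diver v$ (possibly up to sign and up to the normalization by which $\Phi$ was defined). So I must show the linear map $v\mapsto \diver v$, from $\{v\in H^s_{\id}(TM):[\Reeb,v]=0\}$ onto $\{g\in H^{s-1}(M):\Reeb(g)=0,\ \int_M g\,\mu=0\}$, is surjective with a bounded right inverse; the integral constraint is forced because $\int_M\diver v\,\mu=0$, and the $\Reeb$-invariance of $\diver v$ follows from $[\Reeb,v]=0$ together with $\diver\Reeb=0$. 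This is the crux. The natural approach is to use the Boothby--Wang fibration $\pi\colon M\to N$: a $\Reeb$-invariant vector field descends to a vector field on $N$ and its divergence descends to a function on $N$, and the solvability of $\diver_N w = \bar g$ on the closed symplectic manifold $N$ (for $\bar g$ with zero mean) is the standard fact underlying the Moser/Ebin--Marsden argument for $\Diffmu^s$. Pulling a solution $w$ back to an $H^s$, $\Reeb$-invariant $v$ on $M$ with a controlled norm gives the required right inverse; alternatively one can work directly on $M$ by solving a $\Reeb$-invariant elliptic equation (e.g.\ $v=\grad\psi + (\text{correction})$ with $\Laplacian\psi = g$ in the $\Reeb$-invariant function space, using that the Laplacian commutes with $\zeta_\tau$ when $\Reeb$ is Killing) and invoking elliptic regularity in Sobolev spaces. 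Either way, the kernel of $d\Phi_0$ is complemented: it is exactly $S_\contact(\Func_\Reeb^s)$ by the exact sequence of Proposition~\ref{exactsequenceprop} (the $v\in K^s_{\id}$ with $\diver v=0$ being the tangent space $T_{\id}\Diffximu^s$), and a complement is given by the image of the right inverse.

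With $d\Phi_0$ surjective and split, the implicit function theorem on Hilbert manifolds (see \cite{AMR,lang}) shows $\Phi^{-1}(0)$ is, near $0$, a smooth submanifold of $\Omega\cap K^s_\eta$, hence $\Diffximu^s$ is a smooth submanifold of $\Diffxi^s$ in a neighborhood of $\eta$; since $\eta\in\Diffximu^s$ was arbitrary, this holds globally. The only technical points beyond the surjectivity of $d\Phi_0$ are routine: smoothness of $V\mapsto \rho_V$ in the $H^s$ topology (the composition and inverse-function lemmas of \cite{EM}, plus $s>\dim M/2+1$ so that $\Diff^s$ and the relevant maps are $C^\infty$), and the verification that the decomposition respects the $\Reeb$-invariance throughout (which is automatic because every construction---the chart $\Exp_\eta$ restricted to $K^s_\eta$, the density $\rho_V$, the Laplacian---commutes with $\zeta_\tau$). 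I expect the surjectivity-with-bounded-right-inverse of $v\mapsto\diver v$ on the $\Reeb$-invariant subspace to be the one genuinely substantive step; everything else is bookkeeping modeled on the classical $\Diffmu^s$ case of Ebin--Marsden.
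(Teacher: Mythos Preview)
Your proposal is correct and follows essentially the same route as the paper: both arguments apply the implicit function theorem to the volume-pullback map, reduce to showing that $v\mapsto\diver v$ is surjective from $\Reeb$-invariant $H^s$ vector fields onto $\Reeb$-invariant mean-zero $H^{s-1}$ functions, and solve this via $v=\grad\Laplacian^{-1}g$ after choosing a metric making $\Reeb$ Killing so that $\Laplacian$ preserves $\Reeb$-invariance. The paper phrases the constraint as the global map $\pullback(\eta)=\eta^*\mu$ from $\Diffxi^s$ to $\Reeb$-invariant volume forms rather than working in the chart $\Exp_\eta$, but this is only a cosmetic difference.

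One small correction: your identification of $\ker d\Phi_0$ with $S_\contact(\Func_\Reeb^s)$ is not right. The kernel consists of all divergence-free $\Reeb$-invariant vector fields, i.e.\ $T_{\id}\Diffximu^s$, whereas the image of $S_\contact$ is only $T_{\id}\Diffexcontact^s$, a proper subspace (the exact sequence of Proposition~\ref{exactsequenceprop} has $L_\contact$ as the next map, not the divergence). This does not affect the argument, since in a Hilbert space any closed subspace splits automatically, and in any case your bounded right inverse already furnishes the complement. Also note that $\rho_V$ is $H^{s-1}$ rather than $H^s$ (it involves first derivatives of $\sigma$), and your Boothby--Wang alternative needs slight rewording since the theorem assumes no contact structure---but the quotient $M\to M/S^1$ still exists and the argument goes through.
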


\begin{proof}
The easiest way to proceed is to use the implicit function theorem. Consider the function $\pullback$ given by $\pullback(\eta) = \eta^*\mu$. If $\eta\in \Diff^s$ then $\eta^*\mu$ is an $H^{s-1}$ $n$-form, given explicitly in coordinates by the Jacobian determinant of $\eta$; since this just involves multiplying the first derivatives of $\eta$ together, and since multiplication of $H^{s-1}$ functions is smooth~\cite{ebinthesis}, we know that $\pullback$ is a $C^{\infty}$ function from $\Diff^s(M)$ to $H^{s-1}(\Lambda^n)$. 

However we want to consider $\pullback$ on the $\Reeb$-invariant diffeomorphisms, which will necessarily map into the $\Reeb$-invariant $n$-forms. Since $\Diffxi^s$ is a smooth submanifold of $\Diff^s$, the restriction of $\pullback$ is smooth on $\Diffxi^s$. If $\zeta_{\tau}$ again denotes the flow of $\Reeb$, our assumptions imply that $\zeta_{\tau}^*\mu=\mu$, so that if $\eta\in \Diffxi^s$ and $\pushedvolume=\eta^*\mu$, we must have 
$$\zeta_{\tau}^*\pushedvolume = (\zeta_{\tau})^*(\eta^*\mu) = \eta^*(\zeta_{\tau})^*\mu = \eta^*\mu = \pushedvolume.$$
We can write any such $H^{s-1}$ volume form $\pushedvolume$ as $\pushedvolume = F\mu$ for some $H^{s-1}$ function $F$ such that $\int_M F\mu = \int_M \mu$. Also since $\mu$ is $\Reeb$-invariant, the form $\pushedvolume$ will be $\Reeb$-invariant if and only if the function $F$ is. Let  
$$H^{s-1}_{\Reeb,1}(\Lambda^n)= \left\{\pushedvolume \in H^{s-1}_E \, \Big\vert \, \int_M \pushedvolume = \int_M \mu\right\},$$
 a space of $\Reeb$-invariant $n$-forms.  We can identify this with $H^{s-1}_{\Reeb,1}(M, \mathbb{R})$, the space of $\Reeb$-invariant functions with mean one. (The fact that any such function is the image of some diffeomorphism $\eta$ follows from Moser's result \cite{Mos}, as generalized to the Sobolev case in \cite{EM}.) One can prove exactly as in Theorem \ref{diffxisubmfd}
that $H^{s-1}_{\Reeb}(M, \mathbb{R})$ is a $C^{\infty}$ submanifold of $H^{s-1}(M, \mathbb{R})$, the space of all $H^{s-1}$ functions on $M$; thus in particular it is a $C^{\infty}$ Hilbert manifold.

We thus have a $C^{\infty}$ function $\pullback$ from the Hilbert manifold $\Diffxi^s$ to the Hilbert manifold $H^{s-1}_{E,1}(\Lambda^n)$, and the inverse image $\pullback^{-1}\{\mu\}$ is the set $\Diffximu^s$; thus to prove $\Diffximu^s$ is a submanifold it is sufficient to show that the derivative of $\pullback$ is surjective in each tangent space. We have $(T\pullback)_{\eta}(v\circ\eta) = \eta^*(\Lie_v\mu)$ for any $\eta\in \Diffxi^s$ and $v\in T_{\id}\Diffxi^s$, and since $\eta^*$ is an isomorphism of $H^{s-1}$ spaces of $n$-forms, it is sufficient to show that $(T\pullback)_{\id}$ is surjective as a map from $T_{\id}\Diffxi^s$ to $T_{\mu}H^{s-1}_{\Reeb}(\Lambda^n)$. Since every $\pushedvolume\in H^{s-1}_{\Reeb,1}(\Lambda^n)$ has the same integral as $\mu$, this tangent space consists of the $\Reeb$-invariant $H^{s-1}$ $n$-forms which integrate to zero over $M$.  This space can be identified with $H^{s-1}_{E,0}(M, \mathbb{R})$, the space of $E$-invariant functions whose integral is zero.

So let $\lambda$ be an $H^{s-1}$ $E$-invariant $n$-form with $\int_M \lambda =0$. Then $\lambda = f\mu$ for some $f\in H^{s-1}_{E,0}((M, \mathbb{R}).$  Choose a Riemannian metric on $M$ such that $\mu$ is its Riemannian volume form, and integrate over the orbits of $\Reeb$ to obtain a metric such that $\Reeb$ is a Killing field; since $\Reeb$ preserves $\mu$, the new metric will still have $\mu$ as its volume form. Using the Hodge decomposition, we can write $f=\diver{v}$, where $v=\grad \Laplacian^{-1}f$ is an $H^s$ vector field.  
A direct calculation shows that $\Lie_v\mu = \lambda$. Since $\Reeb$ is a Killing field, it commutes with the Laplacian, and we have $[\Reeb, v] = \grad \Reeb(\Laplacian^{-1}f) = \grad \Laplacian^{-1}\Reeb(f) = 0$, so that $v\in T_{\id}\Diffxi^s$ as desired. Thus $(T\pullback)_{\id}$ is surjective, and the implicit function theorem implies that $\Diffximu^s$ is a smooth submanifold of $\Diffxi^s$.
\end{proof}

\begin{corollary}\label{diffximudiffmusubmfd}
Under the assumptions of Theorem \ref{diffximusubmanifold}, the space $\Diffximu^s(M)$ is a $C^{\infty}$ submanifold of $\Diffmu^s(M)$, the space of $H^s$ diffeomorphisms preserving the volume form $\mu$.
\end{corollary}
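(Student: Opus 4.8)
The plan is to deduce this from Lemma \ref{3submanifold}. Observe first the chain of inclusions
$$\Diffximu^s(M) \;\subseteq\; \Diffmu^s(M) \;\subseteq\; \Diff^s(M),$$
the first being immediate from the definitions (the condition $\eta^*\mu=\mu$ is part of what it means to lie in $\Diffximu^s$). Lemma \ref{3submanifold} says that once we know $\Diffximu^s(M)$ and $\Diffmu^s(M)$ are both $C^{\infty}$ submanifolds of $\Diff^s(M)$, it follows that $\Diffximu^s(M)$ is a $C^{\infty}$ submanifold of $\Diffmu^s(M)$. So everything reduces to verifying those two facts.

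That $\Diffmu^s(M)$ is a $C^{\infty}$ submanifold of $\Diff^s(M)$ is the theorem of Ebin--Marsden \cite{EM}, already cited above. For $\Diffximu^s(M)$, Theorem \ref{diffximusubmanifold} gives that it is a $C^{\infty}$ submanifold of $\Diffxi^s(M)$, and Theorem \ref{diffxisubmfd} gives that $\Diffxi^s(M)$ is a $C^{\infty}$ submanifold of $\Diff^s(M)$; so the remaining point is simply transitivity of the submanifold relation --- a $C^{\infty}$ Hilbert submanifold of a $C^{\infty}$ Hilbert submanifold of $N$ is again a $C^{\infty}$ Hilbert submanifold of $N$. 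I would establish this by composing slice charts exactly as in the proof of Lemma \ref{3submanifold}: take a chart $\psi\colon W\to G$ for $\Diff^s$ near a point of $\Diffximu^s$ adapted to the closed subspace $F_1\subseteq G$ that cuts out $\Diffxi^s$, note that the induced chart on $\Diffxi^s$ is adapted to a further closed subspace $F_2\subseteq F_1$ cutting out $\Diffximu^s$, and conclude that $\psi$ itself is then a slice chart for $\Diffximu^s$ inside $\Diff^s$ with slice subspace $F_2\subseteq G$; alternatively one may invoke the universal mapping property of submanifolds in Lang \cite{lang}.

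Once both hypotheses of Lemma \ref{3submanifold} are in hand, applying it with $L=\Diffximu^s(M)$, $M=\Diffmu^s(M)$, and $N=\Diff^s(M)$ finishes the proof. I do not anticipate a genuine obstacle here: all the analytic input --- closedness of the relevant subspaces, smoothness of the defining maps and charts --- has already been supplied by Theorems \ref{diffxisubmfd} and \ref{diffximusubmanifold} and by \cite{EM}, and what remains is only the bookkeeping of how the three submanifold structures fit together. The one mildly delicate point is transitivity of ``being a submanifold,'' but this is a standard fact about Hilbert manifolds and is handled by the chart composition described above.
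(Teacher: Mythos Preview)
Your proof is correct and follows essentially the same route as the paper's: combine Theorems \ref{diffxisubmfd} and \ref{diffximusubmanifold} (via transitivity of the submanifold relation) to get $\Diffximu^s$ as a $C^{\infty}$ submanifold of $\Diff^s$, then invoke Lemma \ref{3submanifold} together with the Ebin--Marsden result that $\Diffmu^s$ is a $C^{\infty}$ submanifold of $\Diff^s$. You are slightly more explicit than the paper about justifying the transitivity step, but the argument is the same.
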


\begin{proof}
Combining Theorem \ref{diffximusubmanifold} and Theorem \ref{diffxisubmfd}, we see that $\Diffximu^s$ is a smooth submanifold of $\Diff^s$. Since $\Diffximu^s$ is a subset of $\Diffmu^s$, and $\Diffmu^s$ is a smooth submanifold of $\Diff^s$ by \cite{EM}, Lemma \ref{3submanifold} gives the result.
\end{proof}

This Corollary will be more useful to us than Theorem \ref{diffximusubmanifold}, since it allows us to obtain a connection on $\Diffximu^s$ directly from the one on $\Diffmu^s$ constructed in \cite{EM} rather than having to construct it  by imitating the proof there.

Now we discuss the quantomorphism subgroup. Ratiu and Schmid \cite{RS} proved that this is a $C^{\infty}$ submanifold of $\Diffxi^s(M)$ (the space of $H^s$ diffeomorphisms preserving the Reeb field $\Reeb$); we will use a modified technique to prove it is also a $C^{\infty}$ submanifold of $\Diffximu^s(M)$. The latter space is more useful geometrically since it has a natural right-invariant $L^2$ Riemannian metric, while the natural  $L^2$ metric on $\Diffxi^s(M)$ is not right-invariant.

The following Lemma is useful in the proof.

\begin{lemma}\label{hodgeReeblemma}
Suppose $\Reeb$ is a Killing field on a compact Riemannian manifold $M$ (without boundary) with closed orbits. Let $\beta$ be a $1$-form with Hodge decomposition 
\begin{equation}\label{hodgebeta}
\beta = d\phi + \omega
\end{equation}
where $\phi$ is a function and $\omega$ is a $1$-form with $\delta \omega=0$. 
If $\beta(\Reeb)=0$ and $\iota_{\Reeb}d\beta=0$, then the same is true for each term: $d\phi(\Reeb)=0$, $\omega(\Reeb)=0$, and $\iota_{\Reeb}d\omega=0$. 
\end{lemma}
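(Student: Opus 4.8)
The plan is to first note that $\beta$ is itself invariant under the flow of $\Reeb$, and then to peel off this invariance term by term using uniqueness of the Hodge decomposition together with the Killing hypothesis.

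First I would apply Cartan's formula: since $\iota_\Reeb\beta = \beta(\Reeb) = 0$ and $\iota_\Reeb d\beta = 0$ by hypothesis, we get
\[
\Lie_\Reeb\beta = \iota_\Reeb d\beta + d\,\iota_\Reeb\beta = 0 .
\]
Differentiating the decomposition $\beta = d\phi + \omega$ along $\Reeb$, and using that $\Lie_\Reeb$ commutes with $d$ (so $\Lie_\Reeb d\phi = d(\Reeb\phi)$), this yields
\[
d(\Reeb\phi) + \Lie_\Reeb\omega = 0 .
\]
Because $\Reeb$ is a Killing field its flow consists of isometries, so $\Lie_\Reeb$ commutes with the codifferential $\delta$; hence $\delta(\Lie_\Reeb\omega) = \Lie_\Reeb(\delta\omega) = 0$, i.e.\ $\Lie_\Reeb\omega$ is coclosed. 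The displayed identity thus exhibits $0$ as a sum of an exact form and a coclosed form, and since these two subspaces are $L^2$-orthogonal on a compact manifold without boundary, each term must vanish: $d(\Reeb\phi) = 0$ and $\Lie_\Reeb\omega = 0$.

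From $d(\Reeb\phi) = 0$ we get $\Reeb\phi \equiv c$ for some constant $c$. To see that $c = 0$ I would integrate: a Killing field is divergence-free, so $\Reeb\phi = \diver(\phi\Reeb)$ and $c\operatorname{vol}(M) = \int_M \diver(\phi\Reeb)\,\mu = 0$ by the divergence theorem. Hence $d\phi(\Reeb) = \Reeb\phi = 0$, and then $\omega(\Reeb) = \beta(\Reeb) - d\phi(\Reeb) = 0$ immediately. Finally, feeding $\omega(\Reeb) = 0$ back into Cartan's formula together with $\Lie_\Reeb\omega = 0$ gives $\iota_\Reeb d\omega = \Lie_\Reeb\omega - d(\omega(\Reeb)) = 0$, which is the last assertion.

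I do not expect a genuine obstacle here: this is a short Hodge-theoretic argument, and in fact closed orbits of $\Reeb$ are not needed at all — only that $\Reeb$ is Killing on a compact boundaryless manifold. The one place to be careful is the step that "$\Lie_\Reeb\omega$ is coclosed," which relies essentially on the Killing property (so that $\Lie_\Reeb$ commutes with $\star$, hence with $\delta$), together with the small integration argument forcing the constant $c$ to vanish.
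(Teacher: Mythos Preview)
Your argument is correct and follows the same skeleton as the paper's proof: use Cartan's formula to get $\Lie_{\Reeb}\beta=0$, observe that $\Lie_{\Reeb}$ commutes with both $d$ and (via the Killing hypothesis) $\delta$, and invoke uniqueness of the Hodge decomposition to force $d(\Reeb\phi)=0$ and $\Lie_{\Reeb}\omega=0$; the remaining conclusions then drop out.

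The one genuine difference is the step from $\Reeb\phi=\text{const}$ to $\Reeb\phi=0$. The paper uses the closed-orbit hypothesis directly: if $\Reeb\phi\equiv c$ then $\phi$ increases by $cL$ along an orbit of length $L$, impossible for $c\ne 0$ when the orbit closes. You instead integrate $\Reeb\phi=\diver(\phi\Reeb)$ over $M$ and apply the divergence theorem, which needs only that $\Reeb$ is divergence-free (automatic for a Killing field) and that $M$ is compact without boundary. Your observation that the closed-orbit assumption is superfluous here is therefore correct, and your route is mildly more general; the paper's version is perhaps more geometric but genuinely uses the extra hypothesis.
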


\begin{proof}
By Cartan's formula, $\Lie_{\Reeb}\sigma = \iota_{\Reeb}d\sigma + d(\iota_{\Reeb}\sigma)$ for any $1$-form $\sigma$, and hence $\Lie_{\Reeb}$ commutes with $d$ for any vector field $\Reeb$. Typically $\Lie_{\Reeb}$ does not commute with $\delta$; however if $\Reeb$ is Killing it does, because $\delta$ is the adjoint of $d$. Hence for any $k$-forms $\beta$ and $\sigma$ we have 
$ \int_M g(\Lie_{\Reeb}\beta, \sigma) \, d\mu = -\int_M g(\beta, \Lie_{\Reeb}\sigma)\, d\mu$
since $\Lie_{\Reeb}g=0$ for the induced metric on $k$-forms and $\Lie_{\Reeb}\mu=0$ for the Riemannian volume form. Hence $\Lie_{\Reeb}$ is skew-selfadjoint, so that 
$$ \delta \Lie_{\Reeb} = -d^*(\Lie_{\Reeb})^* = -(\Lie_{\Reeb}d)^* = -(d\Lie_{\Reeb})^* = -\Lie_{\Reeb}^* d^* = \Lie_{\Reeb}\delta.$$

By assumption we have $\Lie_{\Reeb}\beta = 0$, and so we have 
$ 0 = d(\Lie_{\Reeb}\phi) + \Lie_{\Reeb}\omega;$
since $\delta\Lie_{\Reeb}\omega = \Lie_{\Reeb}(\delta \omega)=0$, this must be the Hodge decomposition of the zero $1$-form. But the Hodge decomposition of a $1$-form is unique
so $d\Lie_E \phi =0$ and $\Lie_{\Reeb} \omega =0.$ Thus $\Lie_{\Reeb} \phi$ must be a constant. But $\Lie_{\Reeb}\phi = \Reeb(\phi)$, and since $\Reeb$ has closed orbits, the only way $\Reeb(\phi)$ can be constant is if $\Reeb(\phi)=0$. Now applying both sides of \eqref{hodgebeta} to $\Reeb$ we obtain $\omega(\Reeb)=0$. Finally since $\Lie_{\Reeb}\omega=0$ and $\omega(\Reeb)=0$, we must have $\iota_{\Reeb}d\omega=0$.
\end{proof} 

Now we prove $\Diffexcontact^s(M)$ is a smooth submanifold of $\Diffxi^s(M)$. Ratiu and Schmid~\cite{RS} gave a proof of this with a minor error, which we fix using Lemma \ref{hodgeReeblemma}. 

\begin{theorem}\label{quantosubmanifold}
Let $M$ be a contact manifold of dimension $2n+1$ with contact form $\contact$ and regular Reeb field $\Reeb$, and assume the volume form $\mu$ on $M$ is a constant multiple of $\contact \wedge (d\contact)^n$. Then for $s>n+3/2$, the space $\Diffexcontact^s(M)$ of $H^s$ diffeomorphisms preserving $\contact$ is a $C^{\infty}$ submanifold of the space $\Diffxi^s(M)$ of $H^s$ diffeomorphisms with symmetry. 
\end{theorem}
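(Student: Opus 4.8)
The plan is to use the implicit function theorem just as in the proof of Theorem \ref{diffximusubmanifold}, working over $\Diffxi^s(M)$ and restricting to a map whose zero set is $\Diffexcontact^s(M)$. Concretely, I would consider the map $\pullback_\contact\colon \Diffxi^s(M)\to H^{s-1}(\Lambda^1)$ given by $\pullback_\contact(\eta) = \eta^*\contact - \contact$. As in the volume case, pulling back a $1$-form is a smooth map on $\Diff^s(M)$ because in coordinates it just multiplies the components of $\contact$ by first derivatives of $\eta$, and multiplication is smooth on $H^{s-1}$; restricting to the submanifold $\Diffxi^s(M)$ keeps it smooth. The key point is to identify the correct target Hilbert space: for $\eta\in\Diffxi^s$ the form $\eta^*\contact$ is automatically $\Reeb$-invariant (since $\eta^*\Lie_\Reeb\contact = \Lie_{\eta_*^{-1}\Reeb}\eta^*\contact$ and $\eta_*\Reeb=\Reeb$), so $\pullback_\contact$ maps into the closed subspace of $\Reeb$-invariant $H^{s-1}$ $1$-forms $\beta$ with $\beta(\Reeb)$ constant; but more is true — one checks $\iota_\Reeb d(\eta^*\contact) = 0$ as well, so the target should be exactly the $H^{s-1}$ completion of the space $\betaimage$ from Proposition \ref{exactsequenceprop} shifted by $\contact$ — equivalently, the forms $\beta$ with $\iota_\Reeb\beta=0$ and $\iota_\Reeb d\beta = 0$ (after subtracting the constant-along-$\Reeb$ part $\beta(\Reeb)\contact$). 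I would set this up so that $\Diffexcontact^s = \pullback_\contact^{-1}\{0\}$ and then the whole argument reduces to surjectivity of the derivative.

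The derivative computation is the heart of it. By right-invariance of the construction, $(T\pullback_\contact)_\eta(v\circ\eta) = \eta^*(\Lie_v\contact)$ for $v\in T_\id\Diffxi^s$, and since $\eta^*$ is an isomorphism of $H^{s-1}$ $1$-form spaces, surjectivity of $(T\pullback_\contact)_\eta$ reduces to surjectivity of $(T\pullback_\contact)_\id = L_\contact$ as a map from $T_\id\Diffxi^s(M)$ (vector fields $v$ with $[\Reeb,v]=0$, not necessarily divergence-free) onto the $H^{s-1}$ completion of the appropriate target. This is essentially the surjectivity statement at the right-hand end of the exact sequence \eqref{shortexactSalpha} in Proposition \ref{exactsequenceprop}, but now without the divergence-free constraint on the domain, which only makes surjectivity easier. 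Given $\beta$ in the target space, I define $v$ by $\contact(v)=0$ and $\gamma(v)=\beta$ (using the pointwise isomorphism $\gamma$ of Proposition \ref{gammaoperatordef}), verify $L_\contact v = \beta$ directly, and then check $[\Reeb,v]=0$ by exactly the Cartan-calculus computation given in the proof of Proposition \ref{exactsequenceprop}: $\Lie_{[\Reeb,v]}\contact = \Lie_\Reeb\Lie_v\contact - \Lie_v\Lie_\Reeb\contact = \Lie_\Reeb\beta = 0$, together with $\contact([\Reeb,v])=0$, forces $[\Reeb,v]=0$ since $\gamma$ is injective on the null space of $\contact$. The loss of one derivative is handled as usual: $v$ is $H^{s-1}$ when $\beta$ is $H^{s-1}$, which is fine because we want surjectivity onto $T_\mu$ of an $H^{s-1}$ manifold from the $H^s$ tangent space — wait, one must be careful here, and this is where Lemma \ref{hodgeReeblemma} enters.

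The subtlety Ratiu--Schmid got wrong, and where I expect the main obstacle to lie, is the identification of the image of $L_\contact$ and the closedness/smoothness of the target manifold. The naive target ``$\Reeb$-invariant $1$-forms annihilating $\Reeb$'' is not obviously a manifold with the right tangent space, and one needs to know that the extra condition $\iota_\Reeb d\beta = 0$ is automatically satisfied and that it cuts out a complemented closed subspace. This is precisely what Lemma \ref{hodgeReeblemma} supplies: using the Hodge decomposition $\beta = d\phi + \omega$ and the fact that $\Reeb$ is Killing (so $\Lie_\Reeb$ commutes with $\delta$ as well as $d$), the conditions $\beta(\Reeb)=0$ and $\iota_\Reeb d\beta=0$ pass to each Hodge summand, which lets one split the target compatibly with the Hodge decomposition and thereby realize it as a closed complemented subspace of $H^{s-1}(\Lambda^1)$ — so the implicit function theorem applies cleanly. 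I would therefore: (1) set up $\pullback_\contact$ and compute its image lies in the $\Reeb$-invariant forms $\beta$ with $\iota_\Reeb d\beta=0$; (2) use Lemma \ref{hodgeReeblemma} to show this target is a smooth Hilbert manifold (closed affine subspace of $H^{s-1}(\Lambda^1)$, modulo the $\contact$-shift); (3) compute the derivative and reduce to surjectivity of $L_\contact$ at the identity; (4) construct the preimage $v$ via $\gamma^{-1}$ and verify $[\Reeb,v]=0$ as above; (5) conclude by the implicit function theorem. The genuinely hard step is (2), the correct identification of the range — everything else is the same bookkeeping as in Theorem \ref{diffximusubmanifold}.
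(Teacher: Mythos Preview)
Your overall strategy---implicit function theorem over $\Diffxi^s(M)$, derivative $L_\contact$, reduce to surjectivity at the identity---matches the paper's. But there is a genuine regularity gap in step (4) that you sense (``wait, one must be careful here'') but do not actually resolve. If $\beta\in H^{s-1}$ and you set $\contact(v)=0$, $\gamma(v)=\beta$, then $v$ is only $H^{s-1}$, not $H^s$; hence $v\notin T_{\id}\Diffxi^s(M)$ and you have not exhibited surjectivity from the $H^s$ tangent space. Lemma~\ref{hodgeReeblemma} does not repair this: it says nothing about regularity, and the target being a closed affine subspace is easy---your step (2) is not the hard step.

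The paper's fix is to enlarge the target so as to record the hidden extra regularity. One maps $\eta\mapsto(\eta^*\contact,\eta^*d\contact)$ into pairs $(\alpha,d\alpha)$ with \emph{both} components in $H^{s-1}$; the point is that $d(\eta^*\contact)=\eta^*(d\contact)$ is genuinely $H^{s-1}$, not merely $H^{s-2}$, so the tangent space of the target carries the extra datum $d\beta\in H^{s-1}$. Now Hodge-decompose $\beta=d\phi+\omega$ with $\delta\omega=0$: automatically $\phi\in H^s$, while $\omega$ is a priori only $H^{s-1}$; but $\Delta\omega=\delta d\omega=\delta d\beta\in H^{s-2}$ by the recorded extra regularity, so $\omega\in H^s$. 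Then $u=\phi\,\Reeb+\gamma^{-1}(\omega)$ is in $H^s$ and satisfies $\Lie_u\contact=d\phi+\omega=\beta$. The actual role of Lemma~\ref{hodgeReeblemma} is in this preimage construction, not in setting up the target: it guarantees that the Hodge pieces separately inherit the Reeb conditions ($\Reeb(\phi)=0$, $\omega(\Reeb)=0$, $\iota_\Reeb d\omega=0$), so that $\gamma^{-1}(\omega)$ is defined and $[\Reeb,u]=0$ follows.
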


\begin{proof}
We use the implicit function theorem for Hilbert manifolds. By definition the quantomorphism group $\Diffexcontact^s(M)$ is the set of $\eta$ such that $\eta^*\contact = \contact$, so it is tempting to simply use the function $\eta\mapsto \eta^*\contact$ in the implicit function theorem; however there are a couple of subtleties that arise, primarily due to the fact that although $\eta^*\contact$ is only in $H^{s-1}$ for an $H^s$ diffeomorphism $\eta$, we also have $d\eta^*\contact = \eta^*d\contact$ which is in $H^{s-1}$. Thus $\eta^*\contact$ is smoother in some directions than in others, and we need to keep track of this in order to show that $\eta\mapsto \eta^*\contact$ is a submersion.

Let $\pullbackimage^{s-1}(M)$ denote the set of all ordered pairs $(\imcont, d\imcont)$ satisfying the following four conditions: 
\begin{align}
\imcont \in H^{s-1}(\Lambda^1) \quad &\text{and}\quad d\imcont \in H^{s-1}(\Lambda^2); \label{condition1} \\
\imcont(\Reeb) = 1 \quad &\text{and} \quad \iota_{\Reeb}d\imcont = 0. \label{condition2}
\end{align}
These conditions are motivated by the fact that they are satisfied for any $1$-form $\eta^*\contact$ for $\eta\in \Diffximu^s(M)$, as we will show in a moment. First we will show that $\pullbackimage^{s-1}$ is a smooth Hilbert manifold. Consider the space $H^{s-1}(\Lambda^1)\times H^{s-1}(\Lambda^2)$ (the space of ordered pairs of $H^{s-1}$ $1$-forms and $H^{s-1}$ $2$-forms on $M$, not necessarily related to each other). The subset $\{ (\imcont, d\imcont) \, \vert \, \imcont\in H^{s-1}, d\imcont\in H^{s-1}\}$ given by condition \eqref{condition1} defines a closed linear subspace, and since the maps $\imcont \to \imcont(\Reeb)$ and $d\imcont \to \iota_{\Reeb}d\imcont$ are linear and continuous, condition \eqref{condition2} defines a closed affine subspace of that subspace. Thus $\pullbackimage^{s-1}(M)$ is a $C^{\infty}$ affine manifold with tangent space 
$$ T_{\imcont}\pullbackimage^{s-1}(M) = \{ (\imcontvar, d\imcontvar)\in H^{s-1}(\Lambda^1)\times H^{s-1}(\Lambda^2) \, \vert \, \imcontvar(\Reeb)=0, \iota_{\Reeb}d\imcontvar = 0 \}.$$

Now let $F_{\contact}\colon \Diffxi^s(M) \to H^{s-1}(\Lambda^1)\times H^{s-1}(\Lambda^2) $ denote the map $F_{\contact}(\eta) = (\eta^*\contact, \eta^*d\contact)$. In coordinates such that $\contact = \sum_k \contact_k(x) \,dx^k$, the $1$-form of $F_{\contact}(\eta)$ is given by 
$$ (\eta^*\contact)(x) = \sum_{j,k} \contact_k\big(\eta(x)\big) \, \frac{\partial \eta^k}{\partial x^j} \, dx^j;$$
thus if $\eta$ is $H^s$, this is an $H^{s-1}$ $1$-form; similarly $d(\eta^*\contact)=\eta^*d\contact$ is an $H^{s-1}$ $2$-form. Since multiplication and composition with $C^{\infty}$ functions are smooth for $H^{s-1}$ functions with $s-1>\dim{M}/2$~\cite{ebinthesis}, $F_{\contact}$ is a $C^{\infty}$ map. Since any $\eta\in \Diffxi^s(M)$ preserves the Reeb field, it is easy to see that $\phi=\eta^*\contact$ satisfies $\phi(\Reeb)=\contact(\Reeb)=1$ and $\iota_{\Reeb}d\phi = \iota_{\Reeb}d\contact=0$. 
Thus the image of $F_{\contact}$ is contained in the space $\pullbackimage^{s-1}$.  We will show that $F_{\contact}$ is locally surjective onto it.

By definition of the Lie derivative, the derivative of $F_{\contact}$ is the map $(TF_{\contact})_{\eta}(u\circ\eta) = \eta^*\Lie_u\contact$ for any $\eta\in \Diffximu^s(M)$ and any $u\in T_{\id}\Diffximu^s(M)$. Thus  to show that $TF_{\contact}$ is surjective everywhere it is sufficient to show it is surjective at the identity, since $\eta^*$ and composition with $\eta$ are both isomorphisms.

Let $(\beta, d\beta) \in T_{\contact}\pullbackimage^{s-1}(M)$; we want to find $u\in T_{\id}\Diffxi^s(M)$ such that $\Lie_u\contact = \beta$ and therefore $\Lie_u d\contact = d\beta$. In other words we want $u=f\Reeb + v$ where $f\in H^{s}(M,\mathbb{R})$, $v\in H^{s}(TM)$, 
$\Reeb(f)=0,$ $\contact(v)=0$ and $[\Reeb, v]=0$, such that $\beta = \Lie_u\contact = df + \iota_vd\contact$. The fact that we have to differentiate $f$ and not $v$ in this formula is what makes this a bit tricky. Since $\beta$ is in $H^{s-1}$, we can write $\beta = d\phi + \omega$ where $\phi$ is a function in $H^{s}$ and $\omega$ is an $H^{s-1}$ $1$-form with $\delta \omega=0$. However since we know $d\beta$ is also in $H^{s-1}$, and since we have $d\beta = d\omega = (d+\delta)\omega,$ we get $\delta d \beta = \Delta \omega.$   We conclude that $\omega$ is actually in $H^{s}$. Also by Lemma \ref{hodgeReeblemma}, we conclude that $$\Reeb(\phi)=0, \quad \omega(\Reeb)=0, \quad \text{and}\; \iota_{\Reeb}d\omega=0.$$ The first formula implies that we can choose $f=\phi$. The second formula implies that there is a unique vector field $v$ such that $\contact(v)=0$ and $\iota_vd\contact = \omega$, because the map $\gamma$ defined in Proposition \ref{gammaoperatordef} is invertible as a map from the null space of $\contact$ to the annihilator of $\Reeb$ at every point. Since $d\contact$ is smooth, we see that $v$ is also in $H^{s}$ since $\omega$ is.

Finally, the formula $\iota_{\Reeb}d\omega=0$ implies that $[\Reeb, v]=0$; this comes from combining the fact that $\contact([\Reeb, v]) = \Reeb(\contact(v)) - v(\contact(\Reeb)) - d\contact(\Reeb, v) = 0$ with the fact that $\Lie_{[\Reeb, v]}\contact = \Lie_{\Reeb}\Lie_v\contact - \Lie_v\Lie_{\Reeb}\contact = \Lie_{\Reeb}\omega  = 0$ to obtain $\iota_{[\Reeb, v]}d\contact = 0$. Since $[\Reeb, v]$ is in the null space of both $\contact$ and $d\contact$ at every point, it must be zero everywhere. 

We have thus shown that $F_{\contact}$ is a submersion from the smooth manifold $\Diffsxi(M)$ to the 
affine space $\pullbackimage^{s-1}(M)$, and thus $\Diffexcontact^s(M) = F_{\contact}^{-1}(\contact, d\contact)$ is a smooth submanifold of $\Diffsxi(M)$.
\end{proof}

\begin{corollary}\label{quantodiffximu}
Under the same assumptions as Theorem \ref{quantosubmanifold}, the quantomorphism manifold $\Diffexcontact^s(M)$ is a $C^{\infty}$ submanifold of $\Diffximu^s(M)$, the space of volume-preserving maps preserving $\Reeb$.
\end{corollary}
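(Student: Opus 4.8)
The plan is to obtain this as a formal consequence of the ``three submanifolds'' principle, Lemma \ref{3submanifold}, taking the ambient Hilbert manifold to be $\Diffxi^s(M)$. Three of the required ingredients are already in hand: $\Diffxi^s(M)$ is a $C^\infty$ Hilbert manifold by Theorem \ref{diffxisubmfd}; Theorem \ref{quantosubmanifold} shows that $\Diffexcontact^s(M)$ is a $C^\infty$ submanifold of $\Diffxi^s(M)$; and Theorem \ref{diffximusubmanifold} shows that $\Diffximu^s(M)$ is a $C^\infty$ submanifold of $\Diffxi^s(M)$. Thus the only remaining point to verify is the set-theoretic inclusion $\Diffexcontact^s(M)\subseteq\Diffximu^s(M)$.

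This inclusion is immediate: if $\eta^*\contact=\contact$, then $\eta_*\Reeb=\Reeb$ by Proposition \ref{reebpreserving}, so $\eta\in\Diffxi^s(M)$; and writing $\mu=c\,\contact\wedge(d\contact)^n$ for a constant $c$, we get $\eta^*\mu=c\,(\eta^*\contact)\wedge(\eta^*d\contact)^n=c\,\contact\wedge(d\contact)^n=\mu$, so $\eta$ preserves $\mu$ as well and hence $\eta\in\Diffximu^s(M)$. Applying Lemma \ref{3submanifold} with $L$, $M$, $N$ equal to $\Diffexcontact^s(M)$, $\Diffximu^s(M)$, $\Diffxi^s(M)$ respectively then yields the claim.

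I do not expect a genuine obstacle here. All of the analysis — the implicit function theorem, the Hodge-theoretic splitting of Lemma \ref{hodgeReeblemma}, and the verification that $F_\contact$ is a submersion — was already carried out in the proof of Theorem \ref{quantosubmanifold}, and Lemma \ref{3submanifold} is precisely the tool for transferring the submanifold conclusion from the larger space $\Diffxi^s(M)$ down to the intermediate space $\Diffximu^s(M)$ without redoing any of it. Attempting instead to re-run the implicit function theorem directly with $\Diffximu^s(M)$ as the domain of $F_\contact$ would actually be more delicate, since the image of $TF_\contact$ would then be cut out by the extra compatibility condition $\contact\wedge d\beta\wedge(d\contact)^{n-1}=0$ appearing in Proposition \ref{exactsequenceprop}; the route through Lemma \ref{3submanifold} sidesteps this entirely.
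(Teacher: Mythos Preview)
Your proposal is correct and follows exactly the route the paper takes: the paper's proof is the single line ``This follows from Lemma \ref{3submanifold},'' and you have simply spelled out the three ingredients (Theorems \ref{quantosubmanifold} and \ref{diffximusubmanifold} for the two submanifold statements, plus the easy set-theoretic inclusion) that feed into that lemma. Your additional commentary on why one should not attempt a direct implicit-function-theorem argument with domain $\Diffximu^s(M)$ is a nice observation but not needed for the proof.
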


\begin{proof} 
This follows from Lemma \ref{3submanifold}.
\end{proof}

\section{Connections and geodesic equations}

In the last section we were concerned only with the differentiable structure of the diffeomorphism groups. Having established smoothness of all the basic objects, we now investigate the (weak) Riemannian geometry. First we define weak metrics, then we establish the existence of a smooth Levi-Civita connection, and finally we derive the geodesic equation. Smoothness of the connection guarantees that the geodesic equation is a smooth ODE on a Hilbert manifold, and hence we have a smooth Riemannian exponential map (and in particular, local well-posedness of the geodesic equation).

We start with the standard $L^2$ metric on $\Diff^s(M)$, defined for vectors $U,V\in T_{\eta}\Diff^s$ by 
\begin{equation}\label{diffmetric}
\llangle U,V\rrangle_{\eta} = \int_M \langle U(x), V(x)\rangle_{\eta(x)} \, d\mu(x)
\end{equation}
in terms of a Riemannian metric on $M$ and its Riemannian volume form $\mu$. Although this metric is neither right-invariant under the group action nor strong enough to generate the Sobolev topology, it is convenient and natural since its geodesics are given in terms of the pointwise geodesics on $M$. That is, for any $\eta\in \Diff^s$ and $U\in T_{\eta}\Diff^s$, the geodesic on $\Diff^s$ through $\eta$ with initial velocity $U$ is 
\begin{equation}\label{expmapdiff}
\Exp_{\eta}(tU) = x\mapsto \exp_{\eta(x)}(tU(x)).
\end{equation}
The corresponding Levi-Civita connection on $T\Diff^s$ is $C^{\infty}$, as shown in \cite{EM}. For right-invariant vector fields $U$ and $V$ on $\Diff^s(M)$ such that $U_{\eta}=u\circ\eta$ and $V_{\eta}=v\circ\eta$ for vector fields $u$ and $v$ on $M$, the covariant derivative is given by the expected formula $\widetilde{\nabla}_{U_{\eta}}V = (\nabla_uv)\circ\eta$ where $\widetilde{\nabla}$ is the covariant derivative for $\Diff^s$ and $\nabla$ is the covariant derivative of $M$. Given the connection on $\Diff^s$, we will construct connections on submanifolds using orthogonal projections, and if the projection is smooth then the resulting connection will be smooth on the submanifold.

On the smooth submanifold $\Diffmu^s$ of volumorphisms, the restriction of the Riemannian metric \eqref{diffmetric} is right-invariant under the group action. The tangent spaces are given by 
$$ T_{\eta}\Diffmu^s = \{ u\circ \eta \, \vert \, \diver{u} = 0\},$$
and right-invariance implies that the orthogonal projection from the bundle $T\Diff^s\vert_{\Diffmu^s}$ in $T\Diff^s$ to the tangent bundle $T\Diffmu^s$ is given, at any $\eta\in \Diffmu^s$ and $W\in T_{\eta}\Diff^s$ of the form $W=w\circ\eta$ for an $H^s$ vector field $w$ on $M$, by the formula 
$$ P(W)_{\eta} = P(W\circ\eta^{-1})_{\id}\circ\eta, \quad \text{where}\quad P(w)_{\id} = w - \grad \Laplacian^{-1}\diver{w}.$$ 
The fact that $P$ is continuous on $T_{id} \Diff^s$ follows from the Hodge decomposition, but the fact that $P$ is smooth as a function of the base point $\eta$ is more subtle and requires the use of infinite-dimensional fiber bundles~\cite{EM} (we will use a similar technique to prove smoothness of the projection from $T\Diffximu^s\vert_{\Diffexcontact^s}$ to $T\Diffexcontact^s$ in Theorem \ref{projectionsmoothness}). The resulting geodesic equation on $\Diffmu^s$ is the Euler equation 
$$ \frac{D}{dt} \frac{d\eta}{dt} = B\left(\frac{d\eta}{dt}, \frac{d\eta}{dt}\right),$$
written in terms of the covariant derivative of $\Diff^s$ and the second fundamental form $B$; the more familiar form comes from writing $\frac{d\eta}{dt} = u\circ\eta$ and computing the orthogonal projection explicitly to obtain 
$$ \frac{\partial u}{\partial t} + \nabla_uu = -\grad p,$$
the Euler equation of ideal fluid mechanics. Smoothness of the connection implies that this is a smooth ODE on $T\Diffsmu$ and leads to local well-posedness of the Euler equations from the Lagrangian point of view~\cite{EM}.

We shall first study the induced connection on $\Diffximu^s$, the set of $H^s$ diffeomorphisms of $M$ preserving both the volume form and a vector field $\Reeb$ generating a circle action. By Corollary \ref{diffximudiffmusubmfd}, this is a smooth submanifold of $\Diffmu^s$. If the field $\Reeb$ is a Killing field for the given metric, the induced connection makes $\Diffximu^s$ a totally geodesic Riemannian submanifold of $\Diffmu^s$. This implies that if a velocity field solves the Euler equation with initial condition commuting with $\Reeb$, then it will always commute with $\Reeb$. This is well-known in special cases such as axisymmetric Euler flow where $E$ is an infinitesimal rotation about the axis. Of course, $\Diffximu^s$ is not a totally geodesic submanifold of $\Diffxi^s$ for the same reason $\Diffmu^s$ is not totally geodesic in $\Diff^s$; the only fluid flows that are also geodesics in $\Diff^s$ are those for which the pressure function $p$ is a constant function of $x$ for each $t,$ so $\grad p = 0.$

\begin{theorem}\label{reebtotallygeodesic}
Suppose $M$ is a compact Riemannian manifold with Killing field $\Reeb$ with all orbits closed and of the same period. Then in the metric induced by \eqref{diffmetric}, the submanifold $\Diffximu^s(M)$ defined in Theorem \ref{diffximusubmanifold} is a totally geodesic Riemannian submanifold of $\Diffmu^s(M)$. 
\end{theorem}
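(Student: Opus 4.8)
The plan is to show that the second fundamental form of $\Diffximu^s$ inside $\Diffmu^s$ vanishes, which by the usual argument reduces to showing that the orthogonal projection $P\colon T\Diffmu^s\big\vert_{\Diffximu^s}\to T\Diffximu^s$ carries the tangent bundle of the ambient manifold back into the tangent bundle of the submanifold when applied to covariant derivatives of tangent fields, or equivalently that $\nabla_uv$ already lies in $T_{\id}\Diffximu^s$ whenever $u,v$ do, \emph{after} the ambient projection onto divergence-free fields. By right-invariance of the metric \eqref{diffmetric} on $\Diffmu^s$ and the fact that $\Diffximu^s$ is a right-invariant subgroup, it suffices to work at the identity: I must show that for $u,v\in T_{\id}\Diffximu^s$ (so $\diver u=\diver v=0$ and $[\Reeb,u]=[\Reeb,v]=0$), the projected covariant derivative $\nabla_uv-\grad\Laplacian^{-1}\diver(\nabla_uv)$ again commutes with $\Reeb$. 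Since $\Reeb$ is Killing, the ambient connection on $\Diffmu^s$ is itself $\Reeb$-equivariant, and the key point will be that $\Laplacian^{-1}$ commutes with $\Lie_{\Reeb}$ (Lemma \ref{hodgeReeblemma}-type reasoning: $\Reeb$ Killing implies $\Lie_{\Reeb}$ commutes with $d$, $\delta$, hence $\Laplacian$, hence $\Laplacian^{-1}$), so $\Lie_{\Reeb}$ commutes with the whole projection operator $w\mapsto w-\grad\Laplacian^{-1}\diver w$.

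Concretely, the steps are: (i) recall from the text that the Levi-Civita connection on $\Diffmu^s$ is $\widetilde\nabla_{U}V = P\big((\nabla_uv)\circ\eta\big)$ where $\nabla$ is the connection on $M$ and $P$ is the Hodge projection onto divergence-free fields; (ii) observe that since $\zeta_\tau$ (the flow of $\Reeb$) is an isometry of $M$, it commutes with $\nabla$, so $\Lie_{\Reeb}(\nabla_uv) = \nabla_{[\Reeb,u]}v + \nabla_u[\Reeb,v] = 0$ whenever $[\Reeb,u]=[\Reeb,v]=0$; (iii) observe as above that $\Lie_{\Reeb}$ commutes with $P$ because it commutes with $\grad$, $\diver$, and $\Laplacian^{-1}$; (iv) conclude $\Lie_{\Reeb}\big(P(\nabla_uv)\big) = P\big(\Lie_{\Reeb}\nabla_uv\big) = 0$, i.e. $P(\nabla_uv)\in T_{\id}\Diffximu^s$, so the tangential component of the ambient covariant derivative of tangent fields is already tangent to $\Diffximu^s$; (v) invoke right-invariance to extend this to every $\eta\in\Diffximu^s$, giving that the second fundamental form of $\Diffximu^s\subset\Diffmu^s$ vanishes identically, hence the submanifold is totally geodesic.

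The main obstacle — really the only delicate point — is step (iii), the commutation of $\Laplacian^{-1}$ with $\Lie_{\Reeb}$: one must know that $\Laplacian$ restricted to exact forms (or functions with mean zero) is invertible and that $\Lie_{\Reeb}$ preserves the relevant spaces so the commutation of $\Laplacian$ with $\Lie_{\Reeb}$ transfers to the inverse. This is exactly the ingredient already established in the proof of Lemma \ref{hodgeReeblemma} ($\Reeb$ Killing $\Rightarrow \Lie_{\Reeb}\delta = \delta\Lie_{\Reeb}$, and $\Lie_{\Reeb}d = d\Lie_{\Reeb}$ always), so I will simply cite that reasoning rather than redo it. A secondary bookkeeping point is that $\Diffximu^s$ is only established to be a submanifold of $\Diffmu^s$ (Corollary \ref{diffximudiffmusubmfd}) and the metric \eqref{diffmetric} is weak, so "totally geodesic" should be understood in the sense that the induced connection from $\Diffmu^s$ is the Levi-Civita connection of the restricted metric and that $\Diffximu^s$-geodesics are $\Diffmu^s$-geodesics; the vanishing of the second fundamental form, proved above, gives both, and smoothness of the induced connection is inherited from smoothness of $P$ established in \cite{EM}.
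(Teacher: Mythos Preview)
Your proposal is correct and follows essentially the same approach as the paper's proof: both reduce by right-invariance to showing $[\Reeb,P(\nabla_uv)]=0$ at the identity for $u,v\in T_{\id}\Diffximu^s$, and both use that $\Reeb$ Killing makes the connection and the Hodge projection $\Reeb$-equivariant. The only cosmetic difference is that the paper phrases the argument in terms of the integrated flow $\zeta_\tau$ (``$\zeta$ is an isometry, so $\zeta_*\nabla_uv=\nabla_uv$ and $\zeta$ preserves the Hodge decomposition''), whereas you work infinitesimally with $\Lie_\Reeb$ and cite the $\Lie_\Reeb\delta=\delta\Lie_\Reeb$ computation from Lemma~\ref{hodgeReeblemma}; these are equivalent.
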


\begin{proof}
To prove this, we must show that whenever $U$ and $V$ are vector fields on $\Diffximu^s$, the covariant derivative $\nabla_UV$ (computed by extending $U$ and $V$ to vector fields on $\Diffmu^s$ and using the connection on $T\Diffmu^s$) also lies in $T\Diffximu^s$. To make this easier, we convert from covariant derivatives of vector fields to covariant derivatives along curves: for any $\eta\in \Diffximu^s$, let $\gamma$ be a curve such that $\gamma(0)=\eta$ and $\gamma'(0) = U_{\eta}$. Define $W$ to be the vector field along $\gamma$ given by $W(t) = V_{\gamma(t)}$. 

Now $W(t) = w(t) \circ \gamma(t)$ for some time-dependent divergence-free vector field $w$ on $M$, while $U_{\eta} = u\circ\eta$ for some divergence-free vector field $u$ on $M$. By assumption $[\Reeb, u] = 0$ and $[\Reeb, w(t)]=0$ for all $t$. The covariant derivative $\frac{DW}{dt}$ is given on $\Diffmu^s$ by~\cite{misiolek}
$$ \frac{DW}{dt}\big|_{t=0} = \left( \frac{\partial w}{\partial t}\big|_{t=0} + P(\nabla_uw(0))\right)\circ\eta.$$
Since $[\Reeb, w(t)]=0$ for all $t$, we can differentiate to get $[\Reeb, \frac{\partial w}{\partial t}|_{t=0}] = 0$ as well, so we just need to show that $[\Reeb, P(\nabla_uw)]=0$ whenever $\diver{u}=\diver{w}=0$ and $[\Reeb,u]=[\Reeb,w]=0$.

Let $\zeta$ be a diffeomorphism in the flow of $\Reeb$ so $\zeta:M\rightarrow M$ is an isometry.  $[E,u]=[E,v]=0$ implies that $\zeta_*u=u$ and $\zeta_* v = v.$  Also since $\zeta$ is an isometry, it preserves the covariant derivative.  Thus $\zeta_* \nabla_u v = \nabla_u v$.  Furthermore, $\zeta$ preserves the Hodge decomposition of vector fields, so $\zeta_* P \nabla_u v = P \zeta_* \nabla_u v = P \nabla_u v.$  Since this is true for any $\zeta$ in the flow of $\Reeb,$, we find that $[\Reeb, P \nabla_u v]=0.$  Thus $\Diffximu^s(M)$ is totally geodesic in $\Diffmu^s(M).$

\end{proof}


Let us now we use the connection induced on $\Diffximu^s$ to define the connection on $\Diffexcontact^s$ by orthogonal projection. As mentioned above, the hard part is showing that the orthogonal projection depends smoothly on the base point, but once we do this we get local existence of the geodesic equation for free.

\begin{lemma}\label{bundlelemma}
Let $M$ be a contact manifold of dimension $2n+1$ with contact form $\contact$ and Reeb field $\Reeb$ with closed orbits all of the same length, and suppose $s>n+3/2$. There is a unique operator $S_{\contact}$ from $H^{s+1}_{\Reeb}(M, \mathbb{R})$ to $T_{\id}\Diffximu^s(M)$ such that for any $f\in H^{s+1}_{\Reeb}(M, \mathbb{R})$, the vector field $u=S_{\contact}f$ satisfies $\contact(u) = f$ and $\Lie_u\contact = 0$.

Furthermore, if $L_{\contact}$ denotes the operator $L_{\contact}u = \Lie_u\contact$, then $L_{\contact}$ maps $T_{\id}\Diffximu^s$ into $H^{s-1}_{\Reeb}(\Lambda^1)$, the space of $H^{s-1}$ one-forms $\imcont$ such that $\imcont(\Reeb)=0$ and $\iota_{\Reeb}d\imcont = 0$.

Finally, the sequence 
\begin{equation}\label{exactsequence}
0 \to H^{s+1}_{\Reeb}(M, \mathbb{R}) \stackrel{S_{\contact}}{\to} T_{\id}\Diffximu^s(M) \stackrel{L_{\contact}}{\to} H^{s-1}_{\Reeb}(\Lambda^1) \to 0 
\end{equation}
is exact, i.e., the null space of each map is the image of the previous one. In particular the image of $S_{\contact}$ is $T_{\id}\Diffexcontact^s(M)$. 
\end{lemma}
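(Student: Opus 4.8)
The plan is to establish Lemma \ref{bundlelemma} by combining the pointwise construction of $S_{\contact}$ from Proposition \ref{quantotangentprop} with the elliptic regularity argument already used in the proof of Theorem \ref{quantosubmanifold}. First I would define $S_{\contact}f$ pointwise exactly as in \eqref{Salphadef}: given $f\in H^{s+1}_{\Reeb}(M,\mathbb{R})$, we have $\Reeb(f)=0$, so $df$ annihilates $\Reeb$ and lies in $H^s(\Lambda^1)$; applying the pointwise isomorphism $\gamma^{-1}$ from Proposition \ref{gammaoperatordef} produces the unique $v$ with $\contact(v)=0$ and $\iota_vd\contact=-df$, and we set $u=fE+v$. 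Since $\gamma$ and its inverse are smooth bundle maps, $v\in H^s(TM)$, hence $u\in H^s(TM)$; the computations in Proposition \ref{quantotangentprop} (which are algebraic and hence valid in Sobolev regularity) show $\Lie_u\contact=0$, and then $[\Reeb,u]=0$ and $\diver u=0$ follow just as in Proposition \ref{exactsequenceprop}, so $u\in T_{\id}\Diffximu^s(M)$. Uniqueness is immediate: if $u'$ also satisfies $\contact(u')=f$ and $\Lie_{u'}\contact=0$ then $u-u'$ is killed by both $\contact$ and $d\contact$ pointwise, hence vanishes. The mapping property of $L_{\contact}$ is the easy direction: if $u\in T_{\id}\Diffximu^s$ then $\Lie_u\contact=d(\contact(u))+\iota_ud\contact$ is manifestly $H^{s-1}$, it annihilates $\Reeb$ since $\iota_{\Reeb}\Lie_u\contact=\Lie_u(\iota_{\Reeb}\contact)-\iota_{[u,\Reeb]}\contact=0$, and $\iota_{\Reeb}d\Lie_u\contact=\iota_{\Reeb}\Lie_ud\contact=\Lie_u(\iota_{\Reeb}d\contact)-\iota_{[u,\Reeb]}d\contact=0$.

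Next I would verify exactness of \eqref{exactsequence} at each of the three nontrivial spots. Injectivity of $S_{\contact}$ at the left is clear since $\contact(S_{\contact}f)=f$. Exactness at $T_{\id}\Diffximu^s$: the inclusion $\operatorname{im}S_{\contact}\subseteq\ker L_{\contact}$ is by construction, and conversely if $L_{\contact}u=0$ then setting $f=\contact(u)$ we get $\iota_ud\contact=-df$, and $\Reeb(f)=d\contact(\Reeb,u)=0$ together with $[\Reeb,u]=0$ forces $f\in H^{s+1}_{\Reeb}$ by the same elliptic bootstrap used below, whence $u=S_{\contact}f$. Surjectivity of $L_{\contact}$ onto $H^{s-1}_{\Reeb}(\Lambda^1)$ is the heart of the matter and is where I would import the argument from Theorem \ref{quantosubmanifold}: given $\beta\in H^{s-1}_{\Reeb}(\Lambda^1)$, Hodge-decompose $\beta=d\phi+\omega$ with $\delta\omega=0$; since $d\beta=d\omega$ is also $H^{s-1}$ and $\delta d\beta=\Laplacian\omega$, elliptic regularity gives $\omega\in H^s$, hence $\phi\in H^{s+1}$; Lemma \ref{hodgeReeblemma} then yields $\Reeb(\phi)=0$, $\omega(\Reeb)=0$, $\iota_{\Reeb}d\omega=0$; so $f:=\phi\in H^{s+1}_{\Reeb}$ and $v:=\gamma^{-1}\omega\in H^s$ with $\contact(v)=0$, and one checks $[\Reeb,v]=0$ exactly as in the cited proof, producing $u=fE+v\in T_{\id}\Diffximu^s$ with $L_{\contact}u=\beta$. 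Finally, the identification $\operatorname{im}S_{\contact}=T_{\id}\Diffexcontact^s(M)$ follows because $\operatorname{im}S_{\contact}=\ker L_{\contact}=\{u:\Lie_u\contact=0\}$, and the tangent space to $\Diffexcontact^s$ at the identity is precisely the space of $H^s$ vector fields $u$ with $\Lie_u\contact=0$ (since $\Diffexcontact^s$ is a submanifold of $\Diffximu^s$ by Corollary \ref{quantodiffximu}, cut out there as $F_{\contact}^{-1}(\contact,d\contact)$).

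The main obstacle I anticipate is bookkeeping the regularity shift correctly at every step: the whole point of the lemma (as flagged in Theorem \ref{quantosubmanifold}) is that $\eta^*\contact$, and therefore $\Lie_u\contact$, is "smoother in some directions than others," so one must be careful that $S_{\contact}$ genuinely gains a derivative (domain $H^{s+1}$, range $H^s$ vector fields) while $L_{\contact}$ loses one, and that the bootstrap $\omega\in H^{s-1},\ \delta d\beta=\Laplacian\omega\Rightarrow\omega\in H^s$ is the correct elliptic estimate on a closed manifold. A secondary subtlety is that the metric used to form the Hodge decomposition should be one for which $\Reeb$ is Killing — available by the regularity hypothesis on $\Reeb$, as in the opening remarks of Section 2 — so that Lemma \ref{hodgeReeblemma} applies verbatim; everything else is routine manipulation of the identities already established in Propositions \ref{gammaoperatordef}, \ref{quantotangentprop}, and \ref{exactsequenceprop}.
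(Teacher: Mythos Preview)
Your proposal is correct and follows essentially the same route as the paper, though you spell out considerably more than the paper does. The paper's own proof is very brief: it cites Proposition \ref{quantotangentprop} for the existence of $S_{\contact}$, verifies the regularity claim by inspecting the Darboux-coordinate formula \eqref{ScontactDarboux} (you instead use the smooth bundle isomorphism $\gamma^{-1}$, which is equivalent), and checks that $L_{\contact}$ lands in $H^{s-1}_{\Reeb}(\Lambda^1)$ by the same computation you give. The paper then implicitly relies on Proposition \ref{exactsequenceprop} and the argument already carried out in Theorem \ref{quantosubmanifold} for exactness, whereas you reproduce the Hodge-decomposition surjectivity argument explicitly---this is fine and arguably clearer. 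One minor remark: your phrase ``by the same elliptic bootstrap used below'' for the reverse inclusion at $T_{\id}\Diffximu^s$ is slightly overblown; all you need there is that $f=\contact(u)\in H^s$ together with $df=-\iota_u d\contact\in H^s$ forces $f\in H^{s+1}$, which is immediate.
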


\begin{proof}
We have already shown that $S_{\contact}$ exists in Proposition \ref{quantotangentprop}. 
To prove $S_{\contact}$ has the desired smoothness, we use its expression in Darboux coordinates in \eqref{ScontactDarboux}. 
Since $\Reeb$-invariance implies $\frac{\partial f}{\partial z}=0$, we see $S_{\theta}f$ is in $H^s$ if and only if $f$ is in $H^{s+1}$. 

From above we have $L_{\contact}u = d\big(\contact(u)\big) + \iota_ud\contact$, so that if $u$ is in $H^s$ then $L_{\contact} u$ is only in $H^{s-1}$. The fact that $L_{\contact}$ maps into $H^{s-1}_{\Reeb, \mu}(\Lambda^1)$ follows as in the proof of Theorem \ref{quantosubmanifold}: if $[\Reeb, v]=0$ and $\imcont = \Lie_v\contact$, then $\imcont(\Reeb)=0$ and $\iota_{\Reeb}d\imcont = 0$.
\end{proof}

Denote by $S_{\contact}^{\adj}$ the formal adjoint of the operator $S_{\contact}$ in the $L^2$ metric. To give an example, in Darboux coordinates with the Euclidean metric, it is easy to check that if $w = \sum_k \big(p^k \, \partial_{x^k} + q^k \, \partial_{y^k}\big) + r \, \partial_z$, then
\begin{equation}\label{Scontactadjoint}
S_{\contact}^{\adj}w = (1+n) r + \sum_k \left( \frac{\partial p^k}{\partial y^k} - \frac{\partial q^k}{\partial x^k} + x^k \, \frac{\partial r}{\partial x^k} \right).
\end{equation}
Formally, the orthogonal complement of the image of $S_{\contact}$ is the null space of $S_{\contact}^{\adj}$, and the orthogonal decomposition of an arbitrary $w\in T_{\id}\Diffximu$ is 
\begin{eqnarray} \label{Scontactdecomp}
w & = & P_{\contact}w + (1-P_{\contact})w, \\ \text{where}\quad 
P_{\contact} & = & S_{\contact} \Laplacian_{\contact}^{-1} S_{\contact}^{\adj} \nonumber \\ \text{and}\quad \Laplacian_{\contact} & = & S_{\contact}^{\adj}S_{\contact}. \nonumber
\end{eqnarray}
Again as an example we compute in Darboux coordinates with the Euclidean metric that the contact Laplacian $\Laplacian_{\contact}$ on a (not-necessarily $\Reeb$-invariant) function is
\begin{equation}\label{contactlaplacianDarboux}
\Laplacian_{\contact}f = 2f  - \sum_{k=1}^n \frac{\partial}{\partial x^k}\Big( \big(1+(x^k)^2\big) \, \frac{\partial f}{\partial x^k}\Big) - \sum_{k=1}^n \frac{\partial}{\partial y^k} \frac{\partial f}{\partial y^k}.
\end{equation}
Observe that $\Laplacian_{\contact}$ differentiates twice in all directions except the $z$ (Reeb) direction; this is a general feature. 

Note the similarity to the Hodge decomposition of a vector field into its gradient and divergence-free parts; this allows us to use the same methods to establish smoothness of the orthogonal projection as in \cite{EM}. There are now two issues: showing that the decomposition works for vector fields on $M$ (which requires analyzing the smoothness of $S_{\contact}^{\adj}$ and proving that $\Laplacian_{\contact}$ is invertible), then extending the decomposition from $T_{\id}\Diffximu$ to $T\Diffximu$ (which means by right-invariance that we have to prove that $TR_{\eta}P_{\contact}TR_{\eta}^{-1}$ is smooth in $\eta$).

First we discuss the projection $P_{\contact}$ at the identity. 

\begin{theorem}\label{Scontactadj}
Let $M$ be a Riemannian manifold with contact form $\contact$ and a Reeb field $\Reeb$ which is also a Killing field. 
The formal adjoint $S_{\contact}^{\adj}$ of the operator $S_{\contact}$ defined in Proposition \ref{quantotangentprop} and Lemma \ref{bundlelemma} maps $T_{\id}\Diffximu^s$ into $H^{s-1}_{\Reeb}(M, \mathbb{R})$, and the contact Laplacian $\Laplacian_{\contact} = S_{\contact}^{\adj}S_{\contact}$ is an operator mapping $H^{s+1}_{\Reeb}(M, \mathbb{R})$ bijectively onto $H^{s-1}_{\Reeb}(M, \mathbb{R})$. Hence the projection $P_{\contact}$ defined by \eqref{Scontactdecomp} is a continuous map from $T_{\id}\Diffximu^s(M)$ to itself. 
\end{theorem}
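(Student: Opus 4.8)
The plan is to treat the three assertions in order, the bijectivity of $\Laplacian_{\contact}$ being the analytic core. For the mapping property of the adjoint, recall that $S_{\contact}$ is a first-order differential operator (see \eqref{ScontactDarboux}), so its formal $L^2$ transpose $S_{\contact}^{\adj}$ is again first-order and hence carries $H^s$ vector fields to $H^{s-1}$ functions; the Darboux expression \eqref{Scontactadjoint} exhibits this. To see that $S_{\contact}^{\adj}$ sends $T_{\id}\Diffximu^s(M)$ into the $\Reeb$-invariant functions $H^{s-1}_{\Reeb}(M,\mathbb R)$, I would use that $\Reeb$ is Killing: its flow $\zeta_\tau$ consists of isometries preserving $\mu$ and (since $\Lie_{\Reeb}\contact=0$) preserving $\contact$, so it preserves the operator $S_{\contact}$ through the characterization $\contact(S_{\contact}f)=f$, $\Lie_{S_{\contact}f}\contact=0$, and it preserves the $L^2$ pairings on functions and on vector fields; taking transposes, $\zeta_\tau$ preserves $S_{\contact}^{\adj}$ as well. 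For $w\in T_{\id}\Diffximu^s(M)$ we have $[\Reeb,w]=0$, i.e.\ $w$ is $\zeta_\tau$-invariant, so $S_{\contact}^{\adj}w$ is $\zeta_\tau$-invariant for every $\tau$, and differentiating at $\tau=0$ gives $\Reeb(S_{\contact}^{\adj}w)=0$. Combined with Lemma \ref{bundlelemma} (which gives $S_{\contact}\colon H^{s+1}_{\Reeb}(M,\mathbb R)\to T_{\id}\Diffximu^s(M)$), this already shows $\Laplacian_{\contact}=S_{\contact}^{\adj}S_{\contact}$ maps $H^{s+1}_{\Reeb}(M,\mathbb R)$ into $H^{s-1}_{\Reeb}(M,\mathbb R)$.

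For bijectivity of $\Laplacian_{\contact}$ between these spaces, the key point is that although $\Laplacian_{\contact}$ is \emph{not} elliptic on $M$---by \eqref{contactlaplacianDarboux} it has no second derivative in the Reeb direction---this degeneracy is invisible on $\Reeb$-invariant functions, so I would pass to the symplectic (Boothby--Wang) quotient $\pi\colon M\to N$ and identify $H^k_{\Reeb}(M,\mathbb R)$ with $H^k(N,\mathbb R)$, under which $\Laplacian_{\contact}$ descends to an operator $\overline{\Laplacian}_{\contact}$ on the closed manifold $N$. Since $\Reeb$ is Killing with closed orbits, the fibers of $\pi$ have constant length and integration over $M$ of $\Reeb$-invariant integrands factors through integration over $N$; hence for $\tilde f,\tilde g\in H^{s+1}(N)$ one has $\langle\overline{\Laplacian}_{\contact}\tilde f,\tilde g\rangle_{L^2(N)}$ equal, up to the fiber-length constant, to $\int_M\langle S_{\contact}f,S_{\contact}g\rangle\,d\mu$. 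From this quadratic form I can read off that $\overline{\Laplacian}_{\contact}$ is formally self-adjoint; that it is positive definite, since the form vanishing forces $S_{\contact}f\equiv 0$ and then $f=\contact(S_{\contact}f)=0$; and that it is a genuine second-order \emph{elliptic} operator on $N$, because at each point the symbol of $S_{\contact}$ is injective on $\ann(\Reeb)$---this is exactly the pointwise invertibility of $\gamma$ from Proposition \ref{gammaoperatordef}---so the symbol of $\overline{\Laplacian}_{\contact}$ is positive definite on $T^*N$. Standard elliptic theory on the closed manifold $N$ then makes $\overline{\Laplacian}_{\contact}\colon H^{s+1}(N)\to H^{s-1}(N)$ Fredholm of index zero; being formally self-adjoint with trivial kernel, it is an isomorphism, with bounded inverse. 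Translating back, $\Laplacian_{\contact}$ is a bijection from $H^{s+1}_{\Reeb}(M,\mathbb R)$ onto $H^{s-1}_{\Reeb}(M,\mathbb R)$ with bounded inverse.

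Finally, the continuity of $P_{\contact}=S_{\contact}\Laplacian_{\contact}^{-1}S_{\contact}^{\adj}$ on $T_{\id}\Diffximu^s(M)$ follows by composing bounded maps: $S_{\contact}^{\adj}\colon T_{\id}\Diffximu^s(M)\to H^{s-1}_{\Reeb}(M,\mathbb R)$ by the first step, $\Laplacian_{\contact}^{-1}\colon H^{s-1}_{\Reeb}(M,\mathbb R)\to H^{s+1}_{\Reeb}(M,\mathbb R)$ by the second, and $S_{\contact}\colon H^{s+1}_{\Reeb}(M,\mathbb R)\to T_{\id}\Diffximu^s(M)$ with image in $T_{\id}\Diffexcontact^s(M)$ by Lemma \ref{bundlelemma}. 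I expect the main obstacle to be the bijectivity step, and specifically the observation that the operator $\Laplacian_{\contact}$, degenerate on $M$, restricts on $\Reeb$-invariant functions to the honestly elliptic operator $\overline{\Laplacian}_{\contact}$ on the compact quotient $N$; once ellipticity, self-adjointness and positivity of $\overline{\Laplacian}_{\contact}$ are established, index-zero Fredholm theory closes the argument. A secondary technical point to get right is that the $L^2$ inner product on $M$ of $\Reeb$-invariant objects descends to a constant multiple of an $L^2$ inner product on $N$, which relies on the constancy of the fiber length guaranteed by $\Reeb$ being Killing.
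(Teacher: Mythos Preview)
Your proposal is correct and follows essentially the same route as the paper: show $S_{\contact}^{\adj}$ is first order and commutes with the action of $\Reeb$ (the paper does this via the identity $S_{\contact}\Lie_{\Reeb}=\Lie_{\Reeb}S_{\contact}$ in Darboux coordinates, you via invariance under the flow $\zeta_\tau$; these are equivalent), then pass to the Boothby--Wang quotient $N$ and argue that $\Laplacian_{\contact}$ descends to an elliptic, self-adjoint, positive operator there. The only stylistic difference is that the paper packages ellipticity and positivity into a single pointwise coercivity bound $|f\Reeb-\Gamma df|^2\ge c(f^2+|df|^2)$ (yielding a G{\aa}rding inequality on $N$ directly), whereas you separate the two ingredients---ellipticity from injectivity of the symbol of $\gamma$, and positivity from injectivity of $S_{\contact}$---and then invoke index-zero Fredholm theory; both arguments reach the same conclusion.
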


\begin{proof}
We work one tangent space at a time: let $x\in M$; we can then decompose $T_xM = \mathbb{R}\Reeb \oplus \ker \contact$ and $T_x^*M = \mathbb{R}\theta \oplus \ann \Reeb$, where $\ann\Reeb$ is the annihilator of $\Reeb$. Recall that the map $\gamma$ defined in Proposition \ref{gammaoperatordef} is an isomorphism from $\ker \contact$ to $\ann \Reeb$. Denote the inverse map from $\ann \Reeb$ to $\ker \contact$ by $\Gamma$, so that we have $S_{\contact}f = f \Reeb - \Gamma df$ for any $f\in H^{s+1}_{\Reeb}(M, \mathbb{R})$, since $df$ annihilates $\Reeb$. Then the formula 
\begin{align*} 
\llangle S_{\contact}f, w\rrangle &= \int_M \langle f\Reeb - \Gamma df, w\rangle \, d\mu = \int_M f \langle \Reeb, w\rangle - \langle df, \Gamma^*w\rangle \, d\mu \\
&= \int_M f \langle \Reeb, w\rangle - f \delta \Gamma^*w \, d\mu
\end{align*}
implies that $S_{\contact}^{\adj}w = \langle \Reeb, w\rangle - \delta \Gamma^*w$, where $\delta$ is the adjoint of $d$. Hence $S_{\contact}^{\adj}$ is a first-order differential operator which maps $T_{\id}\Diff^s$ into $H^{s-1}(M, \mathbb{R})$. 

To show that $S_{\contact}^{\adj}$ maps $T_{\id}\Diffximu^s$ into $H^{s-1}_{\Reeb}(M, \mathbb{R})$, it is sufficient to show that if $[\Reeb, w]=0$ then $\Reeb(S_{\contact}^{\adj}w)=0$. This will follow from the formula
\begin{equation}\label{ScontactLiecommute}
S_{\contact}\Lie_{\Reeb} = \Lie_{\Reeb}S_{\contact}.
\end{equation}
Since $\Reeb$ is a Killing field, the operators $\Lie_{\Reeb}$ are skew-selfadjoint (on both functions and vector fields) which implies $-\Lie_{\Reeb} S_{\contact}^{\adj} = -S_{\contact}^{\adj}\Lie_{\Reeb}$. The formula \eqref{ScontactLiecommute} is easily demonstrated (whether or not $\Reeb$ is Killing) by writing $S_{\theta}f$ in Darboux coordinates as in \eqref{ScontactDarboux};
clearly in coordinates $S_{\theta}$ commutes with
$\Lie_{\Reeb} = \frac{\partial}{\partial z}$. 

Finally to prove that $\Laplacian_{\contact}: H^{s+1}_{\Reeb}(M, \mathbb{R})\to H^{s-1}_{\Reeb}(M, \mathbb{R}) $ is invertible we recall that $H_E^{s\pm 1}(M)$ is naturally identified with $H^{s\pm 1}(N)$
through $\tilde{f}(\pi(x)):= f(x)$ where $\pi:M\to N$ is the Boothby-Wang fibration.  With this identification we can consider
$ \Laplacian_{\contact}: H^{s+1}(N) \to H^{s-1}(N).$ Furthermore we have
\begin{equation}\label{Laplacianproduct}
\int_M f \Laplacian_{\contact}f \, d\mu = \int_M \lvert S_{\contact}f\rvert^2 \, d\mu = \int_M \lvert f \Reeb - \Gamma df\rvert^2 \, d\mu
\end{equation} 
for any $\Reeb$-invariant function $f$.
Also recall that for every $x\in M$, we have $T_xM = \mathbb{R}\Reeb \oplus \ker \contact$; using the fact that $\Gamma$ is invertible and $\Reeb$ is nowhere zero, we see that for each $x$ there is a constant $c_x>0$ such that \begin{equation}\label{pointwiseelliptic}
\lvert f(x)\Reeb(x) - \Gamma_x df(x) \rvert^2 \ge c_x \big(f(x)^2 + \lvert df(x)\rvert^2\big),
\end{equation} and by compactness of $M$ the numbers $c_x$ are uniformly bounded below.

Since $\Reeb$ is a Killing field on $M$, there is a unique Riemannian metric on $N$ such that the projection $\pi$ is a Riemannian submersion. Hence the operator $\Laplacian_{\contact}$ is an elliptic operator on functions on $N$, and the formulas \eqref{Laplacianproduct} and \eqref{pointwiseelliptic} yield
$$ \int_N f\Laplacian_{\contact}f \, d\nu \ge c \int_N f^2 + \lvert df\rvert^2 \, d\nu,$$
for \emph{any} function $f$ on $N$. (Here $\nu$ is the induced volume form on $N$.) Thus
identifying $H^s_{\Reeb}(M, \mathbb{R})$ with $H^s(N, \mathbb{R})$,
we get a positive definite operator $\Laplacian_{\theta}$ from $H^{s+1}(N, \mathbb{R})$ to $H^{s-1}(N, \mathbb{R})$, which leads to an invertible operator from $H^{s+1}_{\Reeb}(M, \mathbb{R})$ to $H^{s-1}_{\Reeb}(M, \mathbb{R})$. 
\end{proof}

Note that the contact Laplacian is never an elliptic operator on $M$, since it involves two derivatives in all directions except the Reeb direction; see \eqref{contactlaplacianDarboux}. For example with the contact form on $S^3$ described in Example \ref{hopffibration}, we can easily compute that $S_{\contact}f = f E_1 - \tfrac{1}{2} (E_3f) E_2 + \tfrac{1}{2} (E_2f) E_3$, and in the Berger metric we have 
\begin{equation}\label{bergercontactlaplacian}
\Laplacian_{\contact} = \alpha^2 - \tfrac{1}{4} E_2^2 - \tfrac{1}{4} E_3^2.
\end{equation}
On the space of functions with zero Reeb derivative, this causes no problem, but on the space of all functions on $M$, it requires dealing with a \emph{sub-Laplacian} rather than a Laplacian. See \cite{ebinprestoncontacto} for details on these issues. 

Having established that the projection makes sense at the identity, we now want to extend it to a projection from the entire bundle $T\Diffximu^s\vert_{\Diffexcontact^s}$ (the set of all tangent spaces of $\Diffximu^s$ with base points in $\Diffexcontact^s$)
to $T\Diffexcontact^s$. Since the $L^2$ metric on $\Diffximu^s$ is right-invariant, the orthogonal projection is right-invariant as well: that is, for any $\eta \in \Diffexcontact^s$ the projection $(\bundleproj)_{\eta}\colon T_{\eta}\Diffximu^s \to T_{\eta}\Diffexcontact^s$ is given for any $W\in T_{\eta}\Diffximu^s$ by
\begin{equation}\label{subbundleprojection} 
(\bundleproj)_{\eta}(W) = \big(P_{\contact}(W\circ\eta^{-1})\big)\circ\eta,
\end{equation}
which is equivalent to $(\bundleproj)_{\eta} = TR_{\eta} \circ P_{\contact} \circ TR_{\eta^{-1}}$. For this to be smooth on $T\Diffximu^s\vert_{\Diffexcontact^s}$ we need to establish that it is smooth in $\eta$.

Using the formula $P_{\contact} = S_{\contact} \Laplacian_{\contact}^{-1}S_{\contact}^{\adj}$, it is tempting to write 
$$ (\bundleproj)_{\eta} = \big(TR_{\eta} S_{\contact} TR_{\eta^{-1}} \big)\circ \big(TR_{\eta} \Laplacian_{\contact}^{-1} TR_{\eta^{-1}} \big)\circ\big( TR_{\eta} S_{\contact}^{\adj} TR_{\eta^{-1}}\big),$$ 
expecting the three operators to be smooth functions of $\eta$. In fact, for any first-order differential operator $D$ the map $TR_{\eta}\circ D\circ TR_{\eta^{-1}}:H^s \to H^{s-1}$ is smooth in $\eta$ by the Chain Rule; thus $\big(TR_{\eta} S_{\contact} TR_{\eta^{-1}} \big)$ is smooth in $\eta.$ However the middle operator breaks down; for the whole projection to be continuous from $H^s$ to itself, we need the middle operator $F\mapsto \big(\Laplacian_{\contact}^{-1}(F\circ\eta^{-1})\big) \circ \eta$ to map $H^{s-1}$ to $H^{s+1}$; however the composition with $\eta\in \Diff^s$ means that the result is only in $H^s$, which breaks the entire result. Instead, we take an indirect approach following \cite{EM}, Appendix A. We split the projection as 
$$ (\bundleproj)_{\eta} = \big(TR_{\eta} S_{\contact} \Laplacian_{\contact}^{-1} TR_{\eta^{-1}}\big) \circ \big( TR_{\eta} S_{\contact}^{\adj} TR_{\eta^{-1}}\big),$$
and use the fact that the second map is smooth while the first is the inverse of the second on a restricted bundle (thus we obtain smoothness indirectly by the inverse function theorem).

\begin{theorem}\label{projectionsmoothness}
The projection $\bundleproj$ given by \eqref{subbundleprojection} is a $C^{\infty}$ bundle map from the subbundle $T\Diffximu^s\vert_{\Diffexcontact^s}$ to $T\Diffexcontact^s$.
\end{theorem}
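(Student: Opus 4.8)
The plan is to follow the template of Ebin--Marsden~\cite{EM}, Appendix A, and prove smoothness of the bundle projection $\bundleproj$ \emph{indirectly}, using the inverse function theorem rather than trying to differentiate $\Laplacian_{\contact}^{-1}$ directly. The obstruction, as noted before the statement, is that the middle operator $F \mapsto (\Laplacian_{\contact}^{-1}(F\circ\eta^{-1}))\circ\eta$ loses a derivative upon composing with $\eta\in\Diff^s$; so we never form that operator. Instead we factor $(\bundleproj)_{\eta} = A_{\eta}\circ B_{\eta}$, where $B_{\eta} = TR_{\eta}\,S_{\contact}^{\adj}\,TR_{\eta^{-1}}\colon T_{\eta}\Diffximu^s \to H^{s-1}_{\Reeb,\eta}(M,\mathbb{R})$ (the space of $H^{s-1}$ functions $g$ with $\Reeb(g\circ\eta^{-1})=0$, i.e.\ $g$ constant on $\eta$-pullbacks of Reeb orbits) and $A_{\eta} = TR_{\eta}\,S_{\contact}\,\Laplacian_{\contact}^{-1}\,TR_{\eta^{-1}}$. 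The first factor $B_{\eta}$ is smooth in $\eta$: $S_{\contact}^{\adj}$ is a first-order differential operator, and conjugation of any fixed first-order differential operator by right translations $TR_{\eta}$ is $C^{\infty}$ in $\eta\in\Diff^s$ by the Chain Rule (exactly as quoted in the paragraph before the theorem; the composition of smooth maps of Sobolev sections of jet bundles is smooth, cf.\ \cite{ebinthesis}). So the whole burden falls on showing $A_{\eta}$ is smooth in $\eta$.

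First I would set up the target bundle for $B$. Define, over $\eta\in\Diffexcontact^s$, the space
$$ V^{s-1}_{\eta} = \{ g\in H^{s-1}(M,\mathbb{R}) \mid \Reeb(g\circ\eta^{-1}) \equiv 0\} = R_{\eta}[H^{s-1}_{\Reeb}(M,\mathbb{R})],$$
which assemble into a smooth subbundle $\mathcal{V}^{s-1}$ of the trivial bundle $\Diffexcontact^s\times H^{s-1}(M,\mathbb{R})$ (smoothness of this subbundle follows because it is the image of the fixed closed subspace $H^{s-1}_{\Reeb}$ under the smooth bundle automorphism $TR_{\eta}$, and $\Diffexcontact^s$ is a smooth manifold by Theorem~\ref{quantosubmanifold}). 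Then $B\colon T\Diffximu^s|_{\Diffexcontact^s} \to \mathcal{V}^{s-1}$ is a smooth bundle map by the previous paragraph, and moreover by the exactness in Lemma~\ref{bundlelemma} conjugated by $TR_{\eta}$, its restriction to $T\Diffexcontact^s$ is a bundle isomorphism onto $\mathcal{V}^{s-1}$ with inverse $TR_{\eta}\,S_{\contact}\,\Laplacian_{\contact}^{-1}\,TR_{\eta^{-1}} = A_{\eta}$ --- here we use that $\Laplacian_{\contact} = S_{\contact}^{\adj}S_{\contact}$ is a bijection $H^{s+1}_{\Reeb}\to H^{s-1}_{\Reeb}$ by Theorem~\ref{Scontactadj}, that $S_{\contact}$ is injective with image $T_{\id}\Diffexcontact^s$ by Lemma~\ref{bundlelemma}, and that $S_{\contact}^{\adj}$ restricted to $T_{\id}\Diffexcontact^s$ is $\Laplacian_{\contact}\circ(S_{\contact})^{-1}$ hence a linear isomorphism onto $H^{s-1}_{\Reeb}$. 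The key point is that $A_{\eta}$ is defined as the \emph{fiberwise inverse} of the smooth bundle map $B|_{T\Diffexcontact^s}$; by the inverse function theorem for Banach-space-bundle morphisms (a bijective bounded linear bundle map between smooth Banach bundles whose total map is smooth has smooth inverse --- cf.\ \cite{lang}, \cite{EM} Appendix A), $A$ is a $C^{\infty}$ bundle map $\mathcal{V}^{s-1} \to T\Diffexcontact^s$.

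Given those two facts, the conclusion is immediate: $\bundleproj = A\circ B\colon T\Diffximu^s|_{\Diffexcontact^s} \to T\Diffexcontact^s$ is a composition of $C^{\infty}$ bundle maps, hence $C^{\infty}$, and it agrees with \eqref{subbundleprojection} by the computation $A_{\eta}\circ B_{\eta} = TR_{\eta}\,(S_{\contact}\Laplacian_{\contact}^{-1}S_{\contact}^{\adj})\,TR_{\eta^{-1}} = TR_{\eta}\,P_{\contact}\,TR_{\eta^{-1}}$. I expect the main obstacle to be a clean formulation and justification of the ``smooth fiberwise inverse is a smooth bundle map'' step: one must verify that the total space map $B$ is genuinely smooth as a map of Hilbert manifolds (not merely fiberwise bounded), that $\mathcal{V}^{s-1}$ is a legitimate smooth Hilbert subbundle with local trivializations in which $B$ and its inverse are represented by smooth families of linear isomorphisms, and then apply the Banach-bundle inverse function theorem; the rest is the bookkeeping of identifying domains and ranges via Lemma~\ref{bundlelemma} and Theorem~\ref{Scontactadj}, which is routine. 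A secondary point to be careful about is that we only ever need $\Laplacian_{\contact}^{-1}$ composed \emph{with} $S_{\contact}$ and \emph{after} $TR_{\eta^{-1}}$, so that we never separately assert that $\eta\mapsto TR_{\eta}\Laplacian_{\contact}^{-1}TR_{\eta^{-1}}$ gains two derivatives --- this is precisely the subtlety flagged before the theorem, and the factorization $\bundleproj = A\circ B$ is designed to avoid it.
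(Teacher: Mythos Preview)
Your proposal is correct and follows essentially the same approach as the paper: factor $\bundleproj$ as $(\overline{S_{\contact}\Laplacian_{\contact}^{-1}})\circ(\overline{S_{\contact}^{\adj}})$, prove the second factor is smooth because $S_{\contact}^{\adj}$ is first-order, and obtain smoothness of the first factor indirectly as the fiberwise inverse of the smooth bundle isomorphism $\overline{S_{\contact}^{\adj}}\big|_{T\Diffexcontact^s}$ via the inverse function theorem.

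One simplification you missed: your target bundle $\mathcal{V}^{s-1}$ is actually the \emph{trivial} bundle $\Diffexcontact^s\times H^{s-1}_{\Reeb}(M,\mathbb{R})$. Indeed, for $\eta\in\Diffexcontact^s\subset\Diffxi^s$ we have $\eta_*\Reeb=\Reeb$, so $D\eta\cdot\Reeb=\Reeb\circ\eta$ and hence $\Reeb(h\circ\eta)=\Reeb(h)\circ\eta$; thus $R_{\eta}[H^{s-1}_{\Reeb}]=H^{s-1}_{\Reeb}$. The paper works directly with this trivial bundle, which spares you the separate verification that $\mathcal{V}^{s-1}$ is a smooth subbundle. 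The paper also makes explicit that $T\Diffexcontact^s$ is a smooth subbundle of $T\Diffximu^s\vert_{\Diffexcontact^s}$ by exhibiting it as the kernel of the smooth bundle map $\overline{L_{\contact}}$ (again first-order, hence smooth in $\eta$); you use this implicitly when restricting $B$ to $T\Diffexcontact^s$, and it is worth stating.
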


\begin{proof}
Since $S_{\contact}^{\adj}$ is a first-order differential operator, the composition $\big(\overline{S_{\contact}^{\adj}}\big)_{\eta} = TR_{\eta}\circ S_{\contact}^{\adj}\circ TR_{\eta^{-1}}$ is given on $W\in T_{\eta}\Diffximu^s$ in a chart (using the chain rule) as a product of the inverse matrix of $D\eta$ and first derivatives of $W$. Since $\eta$ is a diffeomorphism, the matrix $D\eta$ is always invertible, and since $\eta$ is volume-preserving, $D\eta$ has determinant one. Thus the map from $D\eta$ to its inverse is smooth, since by Cramer's rule it involves only determinants of minors, and products of $H^{s-1}$ functions are still in $H^{s-1}$ because $s>\dim{M}/2+1$ \cite{ebinthesis}. We thus find $\overline{S_{\contact}^{\adj}}$ is smooth in any chart; hence it is smooth as a bundle map from $T\Diffximu^s(M)$ to $\Diffximu^s(M) \times H^{s-1}(M, \mathbb{R})$.

Now consider $T\Diffexcontact^s$ as a subset of $T\Diffximu^s\vert_{\Diffexcontact^s}$. This is a subbundle since it is the null space of the bundle map $\overline{L_{\contact}}$ defined by $(\overline{L_{\contact}})_{\eta} = TR_{\eta} \circ L_{\contact} \circ TR_{\eta^{-1}}$ as in Lemma \ref{bundlelemma}, and $\overline{L_{\contact}}$ is smooth since as before it comes from a first-order differential operator.

Since $T\Diffexcontact^s$ is a smooth subbundle, the restriction of $\overline{S_{\contact}^{\adj}}$ to $T\Diffexcontact^s$ is still smooth. On this subbundle it is a bundle isomorphism onto its image $\Diffexcontact^s\times H^{s-1}_{\Reeb}(M, \mathbb{R})$, since $T\Diffexcontact^s$ is the image under the bundle map $\overline{S_{\contact}}$ of the bundle $\Diffexcontact^s\times H^{s+1}_{\Reeb}(M, \mathbb{R})$. Its inverse is given at each $\eta$ by 
$\big(\overline{S_{\contact}^{\adj}})^{-1}_{\eta} = TR_{\eta} (S_{\contact}\Laplacian_{\contact}^{-1}) TR_{\eta^{-1}}$. But the inverse of a smooth isomorphism is also smooth by the inverse function theorem. Thus the map $(\overline{S_{\contact}^{\adj}})^{-1}$ is a smooth bundle map from $\Diffexcontact^s\times H^{s-1}_{\Reeb}(M, \mathbb{R})$ to $T\Diffexcontact^s$. 
Since $T\Diffexcontact^s$ is a smooth subbundle, this bundle map is also smooth when considered as a map from $\Diffexcontact^s\times H^{s-1}_{\Reeb}(M, \mathbb{R})$ into $T\Diffximu^s\vert_{\Diffexcontact^s}$. Hence the composition $\bundleproj$ is also smooth as a bundle map. 
\end{proof}

Now the orthogonal projection defines the Levi-Civita covariant derivative on the submanifold $\Diffexcontact^s$ of $\Diffximu^s$, and Theorem \ref{projectionsmoothness} implies that the induced Levi-Civita connection is $C^{\infty}$. This will allow us to discuss the geodesic equation in the next section.

\section{Local and global existence for the geodesic equation}

Since $\Diffexcontact^s$ is a Riemannian submanifold of $\Diffximu^s$, the covariant derivative on $\Diffexcontact^s$ comes from the orthogonal projection of the covariant derivative on $\Diffximu^s$, which we know is smooth. Thus the geodesic equation on $\Diffexcontact^s$ is a $C^{\infty}$ ordinary differential equation, which is guaranteed to have local-in-time solutions by the Picard method.

\begin{theorem}\label{localgeodesic}
Suppose $M$ is a Riemannian contact manifold, with contact form $\contact$ and regular Reeb field $\Reeb$, such that $\Reeb$ is Killing. Then for $s> \dim(M)/2 +1$ , the geodesic equation $ \big(\overline{P_{\contact}}\big)_{\eta}\left(\frac{D}{dt} \frac{d\eta}{dt}\right) = 0$ on $\Diffexcontact^s$ is equivalent to
\begin{equation}\label{geodesic}
\frac{\partial \eta}{\partial t} = S_{\contact}f\circ\eta, \qquad \frac{\partial \Laplacian_{\contact}f}{\partial t} + \{f, \Laplacian_{\contact}f\} = 0,
\end{equation}
where the contact bracket is defined for functions $f$ and $g$ in $H^s_{\Reeb}(M,\mathbb{R})$ as in \eqref{contactbracket} by $\{f,g\} = S_{\contact}f(g)$. 

The geodesic equation \eqref{geodesic} is a smooth ODE on $\Diffexcontact$, and hence there is a smooth exponential map $\exp_{\id}\colon \Omega\subset T_{\id}\Diffexcontact^s\to \Diffexcontact^s$ such that $\exp_{\id}(v)$ is the geodesic $\eta(1)$ where $\Omega$ is a neighborhood of zero in $T_{\id}\Diffexcontact^s,$ $\eta(0)=\id$ and $\eta'(0)=v$. 
\end{theorem}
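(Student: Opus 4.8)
The plan is to derive the reduced equation \eqref{geodesic} from the intrinsic geodesic equation $(\bundleproj)_\eta\big(\frac{D}{dt}\frac{d\eta}{dt}\big)=0$ by unwinding the definitions, and then to read off smoothness and the existence of the exponential map from Theorem \ref{projectionsmoothness}. First I would write $\frac{d\eta}{dt} = u\circ\eta$ with $u = u(t)$ a time-dependent vector field on $M$; since the geodesic stays in $\Diffexcontact^s$ by definition, we have $u\in T_{\id}\Diffexcontact^s = \operatorname{im}S_{\contact}$ for each $t$, so $u = S_{\contact}f$ for a unique $\Reeb$-invariant function $f=f(t)$ (uniqueness and the correspondence come from Lemma \ref{bundlelemma}). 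This gives the first half of \eqref{geodesic}. For the second half, I would use that $\Diffexcontact^s$ is a Riemannian submanifold of the \emph{totally geodesic} submanifold $\Diffximu^s$ of $\Diffmu^s$ (Theorem \ref{reebtotallygeodesic}), so the covariant derivative $\frac{D}{dt}(u\circ\eta)$ computed in $\Diffximu^s$ agrees with the one in $\Diffmu^s$, namely $\big(\frac{\partial u}{\partial t} + \nabla_u u\big)\circ\eta$. Applying the right-invariant projection \eqref{subbundleprojection}, the geodesic equation becomes $P_{\contact}\big(\frac{\partial u}{\partial t} + \nabla_u u\big)=0$, i.e. $\frac{\partial u}{\partial t} + \nabla_u u \in \ker S_{\contact}^{\adj} = (\operatorname{im}S_{\contact})^{\perp}$.

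Next I would translate this orthogonality back to functions. Since $u = S_{\contact}f$, the time derivative is $\frac{\partial u}{\partial t} = S_{\contact}\frac{\partial f}{\partial t}$, so applying $S_{\contact}^{\adj}$ and using $\Laplacian_{\contact} = S_{\contact}^{\adj}S_{\contact}$ gives $\Laplacian_{\contact}\frac{\partial f}{\partial t} + S_{\contact}^{\adj}(\nabla_{S_{\contact}f}S_{\contact}f) = 0$. It remains to identify $S_{\contact}^{\adj}(\nabla_u u)$ with $\{f,\Laplacian_{\contact}f\}$. Here I would pair against an arbitrary $\Reeb$-invariant test function $g$: by definition of the adjoint, $\int_M g\, S_{\contact}^{\adj}(\nabla_u u)\,d\mu = \llangle S_{\contact}g, \nabla_u u\rrangle$. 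Using the standard identity $\langle \nabla_u u, v\rangle = u\langle u,v\rangle - \langle u,\nabla_u v\rangle$ together with $\diver u = 0$ to integrate by parts, and then the fact (established just before Theorem \ref{quantosubmanifold}, via the Boothby–Wang quotient $N$) that $\int_M \{f,g\}h\,d\mu = \int_M \{h,f\}g\,d\mu$, I would massage the right side into $\int_M g\,\{f,\Laplacian_{\contact}f\}\,d\mu$. Since $g$ is arbitrary and both sides are $\Reeb$-invariant, this yields $S_{\contact}^{\adj}(\nabla_u u) = \{f,\Laplacian_{\contact}f\}$, completing the derivation of \eqref{geodesic}. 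Conversely, any solution of \eqref{geodesic} manifestly satisfies the projected equation, so the two formulations are equivalent.

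For the final assertion: the intrinsic geodesic equation on $\Diffexcontact^s$ is $\frac{D}{dt}\frac{d\eta}{dt} = S^{\eta}\big(\frac{d\eta}{dt},\frac{d\eta}{dt}\big)$, where $S^{\eta}$ is the second fundamental form of $\Diffexcontact^s$ in $\Diffximu^s$, built from the orthogonal projection $\bundleproj$ and the (smooth, by \cite{EM} and Corollary \ref{diffximudiffmusubmfd}) connection on $\Diffximu^s$. By Theorem \ref{projectionsmoothness}, $\bundleproj$ is a $C^\infty$ bundle map, so $S^{\eta}$ depends smoothly on $\eta$, and the geodesic equation is a $C^\infty$ second-order ODE on the Hilbert manifold $T\Diffexcontact^s$. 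The Picard–Lindelöf theorem on Banach (hence Hilbert) manifolds then gives existence, uniqueness, and smooth dependence on initial data of local solutions, and hence a smooth exponential map $\exp_{\id}$ defined on a neighborhood $\Omega$ of $0$ in $T_{\id}\Diffexcontact^s$ with $\exp_{\id}(v) = \eta(1)$ for the geodesic with $\eta(0)=\id$, $\eta'(0)=v$. The main obstacle is the computation identifying $S_{\contact}^{\adj}(\nabla_u u)$ with the contact bracket $\{f,\Laplacian_{\contact}f\}$: one must carefully use $\diver u = 0$, the Killing property of $\Reeb$, and the symmetry relation for the bracket coming from the symplectic quotient, and check that no boundary or $z$-direction terms survive — everything else is bookkeeping with results already in hand.
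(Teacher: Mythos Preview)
Your proposal is correct and follows essentially the same route as the paper: write $u=S_{\contact}f$, test the projected acceleration $u_t+\nabla_u u$ against $v=S_{\contact}g$, use $\diver u=\diver v=0$ together with $[S_{\contact}f,S_{\contact}g]=S_{\contact}\{f,g\}$ and the symmetry $\int_M\{f,g\}h\,d\mu=\int_M\{h,f\}g\,d\mu$ to reduce to $\int_M g\big(\Laplacian_{\contact}f_t+\{f,\Laplacian_{\contact}f\}\big)\,d\mu=0$, and invoke Theorem~\ref{projectionsmoothness} plus Picard--Lindel\"of for the exponential map. The only cosmetic difference is that you phrase the orthogonality step as ``apply $S_{\contact}^{\adj}$'' rather than ``pair with an arbitrary $S_{\contact}g$,'' which is the same thing; one minor slip is that the $\Diffmu^s$ covariant derivative is $P(u_t+\nabla_u u)\circ\eta$ rather than $(u_t+\nabla_u u)\circ\eta$, but since you immediately project further onto $T\Diffexcontact^s$ this is harmless (and the paper simply works with the ambient $\Diff^s$ covariant derivative from the start, bypassing the totally geodesic step).
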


\begin{proof}
Let $\frac{D}{dt}$ denote the covariant derivative on $\Diffximu^s$; then a geodesic in $\Diffximu^s$ satisfies $\frac{D}{dt} \frac{d\eta}{dt} = 0$. Since $\Diffexcontact^s$ is a smooth submanifold, its geodesic equation is $ \big(\overline{P_{\contact}}\big)_{\eta}\left(\frac{D}{dt} \frac{d\eta}{dt}\right) = 0$.
Having established smoothness of $\big(\overline{P_{\contact}}\big)_{\eta}$ in Theorem \ref{projectionsmoothness}, we know this geodesic equation is $C^{\infty}$ on $\Diffexcontact^s$. 
Since the geodesic equation is smooth, existence of an exponential map follows from the usual theory of ODEs in Banach manifolds.

To compute the geodesic equation explicitly, we write $\frac{d\eta}{dt} = u\circ\eta$ for a time-dependent vector field $u$ on $M$, which is in $T_{\id}\Diffexcontact^s$. We can write $u=S_{\contact}f$ for some time-dependent function $f$ in $H_E^{s+1}(M)$. In the ambient space $\Diff^s(M)$, we know the covariant derivative is given by $ \frac{\widetilde{D}}{dt} \frac{d\eta}{dt} = (u_t + \nabla_uu)\circ\eta$, and the covariant derivative in $\Diffexcontact^s$ is just the orthogonal projection onto $T\Diffexcontact^s$. To compute this, we let $v=S_{\contact}g$ be an arbitrary vector in $T_{\id}\Diffexcontact^s$, and define a right-invariant vector field by $V_{\eta}=v\circ\eta$. Then
\begin{align*} 
0 &= \Llangle \frac{\widetilde{D}}{dt} \frac{d\eta}{dt}, V_{\eta}\Rrangle_{\eta} = \llangle u_t + \nabla_uu, v\rrangle_{\id} \\
&= \int_M \big(\langle u_t, v\rangle - \langle u,v\rangle \diver{u} - \langle u, [u,v]\rangle + \tfrac{1}{2} \lvert u\rvert^2 \diver{v} \big) \, d\mu.
\end{align*}
Now $\diver{u}=\diver{v}=0$ since $\Diffexcontact\subset \Diffximu$. 
Also $[u,v] = [S_{\contact}f, S_{\contact}g] = S_{\contact}\{f,g\}$ where the contact bracket is given as in \eqref{contactbracket}. Thus we find 
$$ 0 = \int_M \big(g \Laplacian_{\contact}f_t - \{f,g\} \Laplacian_{\contact}f \big) \, d\mu = 
\int_M g\big( \Laplacian_{\contact}f_t + \{f, \Laplacian_{\contact}f\}\big) \, d\mu.$$ 
where the last equation comes from the standard property of Poisson bracket which was mentioned after the proof of Proposition \ref{bracketisomorphism}.

Since this is true for any function $g\in H^{s+1}_{\Reeb}(M)$, we must have \eqref{geodesic}.\footnote{Note that the decomposition \eqref{geodesic} is valid is $H^{s-1}$, but not in $H^s$. The two equations are not actually ODEs on $\Diffexcontact^s$, since left-translation is not differentiable in the $H^s$ spaces. The second formula in \eqref{geodesic} is an expression of the general formula $ \frac{du}{dt} + \ad_u^*u = 0$ for the Euler-Arnold equation on a Lie group with right-invariant metric. This was originally derived by
Arnold~\cite{arnold} in the general case; see for example \cite{arnoldkhesin} or \cite{MP} for more recent references.}
\end{proof}

\begin{exmp}\label{geostrophicsphere}
Recall that for the contact structure on $S^3$ as in Example \ref{hopffibration}, the contact Laplacian is given by \eqref{bergercontactlaplacian}: $\Laplacian_{\theta} = \alpha^2 - \tfrac{1}{4} (E_2^2 + E_3^2)$. As mentioned, this operator is more naturally understood in terms of the Boothby-Wang quotient, which in this case is $N=S^2$. We can then compute that the operator $\Laplacian_{\theta}$ reduces to $\alpha^2 - \Laplacian$ on $S^2$ in terms of the usual Laplacian on $S^2$ (the factor $\frac{1}{4}$ cancels, since the metric on $S^2$ which makes the Hopf fibration a Riemannian submersion is $\frac{1}{4}$ of the usual one). Hence the geodesic equation \eqref{geodesic} reduces to 
\begin{equation}\label{spheregeostrophic} 
\partial_t(\Laplacian f - \alpha^2 f) + \{f, \Laplacian f\} = 0
\end{equation}
on $S^2$ in terms of the Poisson bracket (using the fact mentioned after Proposition \ref{bracketisomorphism} that the contact bracket on $\Reeb$-invariant functions is equivalent to the symplectic bracket on the quotient). 

Equation \eqref{spheregeostrophic} is the quasigeostrophic equation on $S^2$ ``in the $f$-plane approximation,'' where the parameter $\alpha^2$ is the rotational Froude number (or $1/\alpha$ is the Rossby number) and $f$ is the stream function; see Holm-Zeitlin~\cite{holmzeitlin} and Zeitlin-Pasmanter~\cite{zeitlinpasmanter}. (More precisely the $f$-plane approximation is valid only when the configuration manifold is $\mathbb{R}^2$; this is a simplifiication of the true quasigeostrophic equation.) In our context the Lagrangian motion is given in $S^3$ by $\frac{\partial \eta}{\partial t} = S_{\theta}f \circ \eta$, so that $\eta \in \Diffexcontact(S^3)$. As we will see in a moment, the quantomorphism group is a circle bundle over the group of Hamiltonian symplectomorphisms, and its Lagrangian flow projects to the usual Lagrangian flow on $S^2$. We thus obtain a natural geodesic interpretation of the $f$-plane quasigeostrophic equation which is a bit different from the one given in \cite{zeitlinpasmanter}. 

This works more generally; given any two-dimensional manifold $N$ with symplectic form $\omega$, we can find a three-dimensional contact manifold $M$ such that $N$ is the Boothby-Wang quotient, and there is a unique choice of Riemannian metric on $M$ such that the Reeb field $\Reeb$ is orthogonal to $\ker \contact$ and $\lvert \Reeb\rvert^2$ is a given constant Froude number; the geodesic equation on $\Diffexcontact(M)$ will then be the ``$f$-plane'' quasigeostrophic equation. Later (see Remark \ref{realquasigeostrophic}) we wills discuss the actual quasigeostrophic equation on the $2$-sphere.
\end{exmp}

From the theory of ODEs on Banach spaces (see \cite{lang}, Chapter 4, Proposition 2.5), we know that the only way a solution can blow up is if the norm blows up. Hence we can analyze global existence by establishing estimates for the Sobolev norms of $u=S_{\theta}f$ in $T_{\id}\Diffexcontact^s(M)$. For this purpose it is convenient to view \eqref{geodesic} as an equation not on the contact manifold $M$ but on its Boothby-Wang symplectic quotient $N$. Hence we consider Hamiltonian diffeomorphisms on $N$.

\begin{defn} We say that a diffeomorphism $\zeta$ is in $\Diffham^s(N)$ if there exists a curve $\zeta(t)$ from $\id$ to
$\zeta$ such that $\dot{\zeta}(t) = u(t) \circ \zeta(t)$ where $u(t) = \omega^{\sharp}(df(t))$ for some $f(t)\in H^{s+1}(N)$ for every $t$. (Recall that $\omega^{\sharp}$ is the inverse of the operator $\omega^{\flat}\colon= v\mapsto \iota_v\omega$.)
\end{defn}

There is a natural fibration $\Pi$ from quantomorphisms $\Diffexcontact^s(M)$ on $M$ to Hamiltonian diffeomorphisms $\Diffham^s(N)$ on $N$, first constructed by Ratiu and Schmid~\cite{RS}. 
The map 
is constructed as follows: if $\eta \in \Diffexcontact^s(M),$ then $\eta$ preserves orbits of $\Reeb$ by Proposition \ref{reebpreserving}. Hence one gets $\zeta \in \Diff^s(N)$ by letting $\zeta(\tilde{x}) = \pi(\eta(x))$ where $\tilde{x} = \pi(x)$.  Since $\eta \in \Diffexcontact^s(M)$, there exists a curve $\eta(t)$ from $\id$ to $\eta$ in $\Diffexcontact^s(M)$ such that $\dot{\eta} = v \circ \eta$ where  
$v = S_{\theta} f$ for some curve $f \in H^{s+1}_{\Reeb}(M).$ Thus we can set $u=\pi_* S_{\theta} f = \pi_*(fE-\Gamma df) = 
-\pi_*(\Gamma df)$, and the flow $\zeta$ of the time-dependent $u$ will be a curve in $\Diff^s(N)$. But for any vector $z\in TM$, we have
$\omega(\pi_* \Gamma df, \pi_* z) = d\contact(\Gamma df, z)=\gamma \Gamma df(z)=df(z).$  Therefore $\omega^{\flat}(\pi_* \Gamma df) = d \tilde{f}$, where $f \in H^{s+1}_E(M)$ is identified with $\tilde{f} \in H^{s+1}(N)$.  Hence $\omega^{\flat}(\pi_*(u)) = -d\tilde{f}.$  It follows that $\zeta \in \Diffham^s(N)$.

\begin{theorem}\label{boothbywang}
Let $M$ be a Riemannian manifold of dimension $2n+1$ with a regular contact form $\contact$ for which the Reeb field $\Reeb$ is a Killing field with orbits of constant length $1$. Suppose that the Riemannian volume form $\mu$ on $M$ is a constant multiple of $\contact \wedge (d\contact)^n$.
Define $\varphi = \Laplacian_{\contact}(1) = S_{\contact}^{\adj}(\Reeb)$; then $\varphi$ is $\Reeb$-invariant and descends to a function $\tilde{\varphi}\colon N\to\mathbb{R}$ such that $\tilde{\varphi}\circ\pi=\varphi$.
Let $N$ be the Boothby-Wang quotient with smooth projection $\pi\colon M\to N$, and $\omega$ the symplectic form on $N$ satisfying $\pi^*\omega = d\contact$. Then there is a unique Riemannian metric on $N$ such that the projection $\pi$ is a Riemannian submersion, and the induced volume form $\nu$ on $N$ satisfies $\pi^*\nu = \iota_{\Reeb}\mu$ and is a constant multiple of $\omega^n$. 

Define a right-invariant Riemannian metric on $\Diffham^s(N)$ at the identity by the formula 
\begin{equation}\label{hamiltonianmetric}
\llangle u, v\rrangle_N = \int_N \tilde{f}\Laplacian_{\contact} \tilde{g} \, d\nu - \frac{\left(\int_N \tilde{\varphi} \tilde{f}\, d\nu\right)\left(\int_N \tilde{\varphi} \tilde{g}\, d\nu\right)}{\int_N \tilde{\varphi} \, d\nu},
\end{equation}
for vector fields $u,v\in T_{\id}\Diffham^s(N)$, 
where $u=\omega^{\sharp}d\tilde{f} $ and $v=\omega^{\sharp}d\tilde{g}$, and
$\Laplacian_{\contact}$ is identified with an operator on $N$ as in the proof of Theorem \ref{Scontactadj}. 
Then the metric is well-defined on $\Diffham^s(N)$, and the map $\Pi$ is also a Riemannian submersion.
\end{theorem}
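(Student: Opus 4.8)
The plan is to verify the four assertions in turn: Reeb-invariance of $\varphi$; existence, uniqueness and volume form of the submersion metric on $N$; well-definedness of \eqref{hamiltonianmetric}; and the Riemannian submersion property of $\Pi$. First, $\varphi=\Laplacian_{\contact}(1)$ equals $S_{\contact}^{\adj}(\Reeb)$ because $S_{\contact}(1)=\Reeb-\Gamma\,d(1)=\Reeb$, straight from the formula $S_{\contact}f=f\Reeb-\Gamma\,df$ of Theorem~\ref{Scontactadj}. To see $\Reeb(\varphi)=0$ I would use the commutation $\Lie_{\Reeb}S_{\contact}^{\adj}=S_{\contact}^{\adj}\Lie_{\Reeb}$, which follows from $S_{\contact}\Lie_{\Reeb}=\Lie_{\Reeb}S_{\contact}$ and the skew-adjointness of $\Lie_{\Reeb}$ on functions and vector fields (both established in the proof of Theorem~\ref{Scontactadj} since $\Reeb$ is Killing): then $\Lie_{\Reeb}\varphi=S_{\contact}^{\adj}\Lie_{\Reeb}\Reeb=S_{\contact}^{\adj}[\Reeb,\Reeb]=0$, so $\varphi$ descends to $\tilde\varphi$ on $N$ under the identification $H^s_{\Reeb}(M,\mathbb{R})\cong H^s(N,\mathbb{R})$.

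For the metric on $N$, I would use the standard construction: declare $d\pi_x$ to be a linear isometry from the horizontal subspace $(\ker d\pi_x)^{\perp}=(\mathbb{R}\Reeb(x))^{\perp}$ onto $T_{\pi(x)}N$. This is independent of the choice of $x$ in the fibre because the flow $\zeta_\tau$ of $\Reeb$ is an isometry preserving the fibres, hence the vertical and horizontal distributions; and it is the unique metric making $\pi$ a Riemannian submersion. For the volume form, write $\mu=\kappa\,\contact\wedge(d\contact)^n=\kappa\,\contact\wedge\pi^*\omega^n$ (using $\pi^*\omega=d\contact$) and compute $\iota_{\Reeb}\mu=\kappa\,\pi^*\omega^n$ from $\iota_{\Reeb}\contact=1$ and $\iota_{\Reeb}\pi^*\omega^n=0$; thus $\iota_{\Reeb}\mu$ is basic and equals $\pi^*\nu$ with $\nu:=\kappa\,\omega^n$, a constant multiple of $\omega^n$. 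Finally, for a Riemannian submersion with unit-length fibres one has $\mu=V^\flat\wedge\pi^*\mu_N$ where $V=\Reeb$ is the unit vertical field (this is where the length-one normalization $\lvert\Reeb\rvert\equiv1$ enters), so that $\iota_{\Reeb}\mu=\pi^*\mu_N$; comparing with the previous line gives $\mu_N=\nu$, as claimed.

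The only ambiguity in \eqref{hamiltonianmetric} is that $u=\omega^{\sharp}d\tilde f$ determines $\tilde f$ only up to an additive constant (which matches the fact that on $M$ the change $f\mapsto f+c$ sends $S_{\contact}f$ to $S_{\contact}f+c\Reeb$, killed by $\pi_*$). The key identity is $\int_N\Laplacian_{\contact}\tilde g\,d\nu=\int_N(\Laplacian_{\contact}1)\tilde g\,d\nu=\int_N\tilde\varphi\tilde g\,d\nu$ by self-adjointness of $\Laplacian_{\contact}$ (Theorem~\ref{Scontactadj}); hence replacing $\tilde f$ by $\tilde f+c$ adds $c\int_N\tilde\varphi\tilde g\,d\nu$ to the first term of \eqref{hamiltonianmetric} and subtracts the same quantity from the correction term, leaving the value unchanged, and symmetrically in $\tilde g$. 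The denominator is positive: $\int_N\tilde\varphi\,d\nu$ is a positive multiple of $\int_M\varphi\,d\mu=\int_M\lvert S_{\contact}1\rvert^2\,d\mu=\int_M\lvert\Reeb\rvert^2\,d\mu>0$ by \eqref{Laplacianproduct}. One also gets positive-definiteness for free: writing $\langle a,b\rangle_{\Laplacian}:=\int_N a\,\Laplacian_{\contact}b\,d\nu$ (an inner product by Theorem~\ref{Scontactadj}), the quadratic form \eqref{hamiltonianmetric} equals $\lVert\tilde f\rVert_{\Laplacian}^2-\langle 1,\tilde f\rangle_{\Laplacian}^2/\lVert 1\rVert_{\Laplacian}^2$, the squared $\Laplacian_{\contact}$-distance from $\tilde f$ to the line of constants, which vanishes precisely when $\tilde f$ is constant, i.e. when $u=0$.

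For the submersion property of $\Pi$ — the main step — note that $\Pi$ is a group homomorphism intertwining right translations (as its construction shows) and that both metrics are right-invariant, so it suffices to argue at the identity; smoothness and well-definedness of $\Pi$ as a map of Hilbert manifolds I would cite from Ratiu--Schmid~\cite{RS}. From $\pi_*S_{\contact}f=-\omega^{\sharp}d\tilde f$ (established just before the theorem) together with surjectivity of $S_{\contact}$ onto $T_{\id}\Diffexcontact^s$ from Lemma~\ref{bundlelemma}, the kernel of $d\Pi_{\id}$ is exactly $\{S_{\contact}c:c\text{ constant}\}=\mathbb{R}\Reeb$, so the vertical space is one-dimensional and $d\Pi_{\id}$ is onto. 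The horizontal space is the $L^2(M)$-orthogonal complement of $\mathbb{R}\Reeb$ in $T_{\id}\Diffexcontact^s$; since $\langle S_{\contact}f,\Reeb\rangle_{L^2(M)}=\langle S_{\contact}f,S_{\contact}1\rangle_{L^2(M)}=\langle f,\Laplacian_{\contact}1\rangle_{L^2(M)}=\int_M f\varphi\,d\mu$, which is a positive multiple of $\int_N\tilde f\tilde\varphi\,d\nu$, the horizontal vectors are precisely the $S_{\contact}f$ with $\int_N\tilde\varphi\tilde f\,d\nu=0$, and $d\Pi_{\id}$ maps this space onto $T_{\id}\Diffham^s(N)$ (given $\omega^{\sharp}d\tilde g$, lift a representative and subtract a constant to impose the orthogonality). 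For horizontal $v_i=S_{\contact}f_i$ I would then compute, using the identification of $\Laplacian_{\contact}$ with an operator on $N$, that $\langle v_1,v_2\rangle_{L^2(M)}=\int_M f_1\Laplacian_{\contact}f_2\,d\mu=\int_N\tilde f_1\Laplacian_{\contact}\tilde f_2\,d\nu$, while $\langle d\Pi v_1,d\Pi v_2\rangle_N=\int_N\tilde f_1\Laplacian_{\contact}\tilde f_2\,d\nu$ because the correction term of \eqref{hamiltonianmetric} vanishes on horizontal vectors by construction; hence $d\Pi_{\id}$ restricts to an isometry from the horizontal space onto $T_{\id}\Diffham^s(N)$, and right-invariance propagates this to every fibre. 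I expect the main obstacle to be not any single estimate but keeping the normalizations consistent — the constant relating $\int_M(\cdot)\,d\mu$ to $\int_N(\cdot)\,d\nu$ under $H^s_{\Reeb}(M)\cong H^s(N)$, the normalization $\lvert\Reeb\rvert\equiv1$ that forces $\pi^*\nu=\iota_{\Reeb}\mu$, and the consistent use of $\Laplacian_{\contact}$ as the operator on $N$ — together with recognizing that the correction term in \eqref{hamiltonianmetric} is designed exactly so as to vanish on the $L^2$-complement of the Reeb direction, which is precisely what a Riemannian submersion demands.
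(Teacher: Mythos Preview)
Your proposal is correct and follows essentially the same route as the paper: identify the vertical space of $\Pi$ at the identity as $\mathbb{R}\Reeb$, characterize horizontal vectors by $\int_M f\varphi\,d\mu=0$, and check that the $L^2(M)$ inner product on horizontal lifts agrees with \eqref{hamiltonianmetric}. The only noteworthy differences are presentational: the paper builds the quotient metric explicitly in Darboux coordinates rather than by the abstract horizontal-isometry definition you use, and it verifies well-definedness of \eqref{hamiltonianmetric} by computing $\llangle S_{\contact}(f+a), S_{\contact}(g+b)\rrangle$ for the specific constants $a,b$ that make the lifts horizontal, whereas you check invariance under $\tilde f\mapsto\tilde f+c$ directly via $\int_N\Laplacian_{\contact}\tilde g\,d\nu=\int_N\tilde\varphi\tilde g\,d\nu$; your route is slightly more direct, the paper's has the advantage of deriving the formula and its well-definedness simultaneously. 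One small caution: you write ``$\lvert\Reeb\rvert\equiv1$'' where the hypothesis is that \emph{orbits} have length~$1$; under the standing regularity assumption (all orbits of equal period) together with $\Reeb$ Killing these are not quite the same statement, but either normalization yields the integral identity $\int_M\psi\,d\mu=\int_N\tilde\psi\,d\nu$ that you actually need, so there is no gap.
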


\begin{proof}
Near any point of $M$ we can choose a Darboux coordinate chart $(x^1, \cdots, x^n, y^1, \cdots, y^n, z)$ such that $\theta = dz + \sum_{k=1}^n x^k \, dy^k$; then the corresponding chart on $N$ is $(u^1, \cdots, u^n, v^1, \cdots, v^n)$, and the projection in coordinates is $$\pi(x^1,\cdots, x^n, y^1, \cdots, y^n, z) = (x^1, \cdots, x^n, y^1, \cdots, y^n),$$ i.e., just ``forgetting about $z$.'' In such coordinates the Reeb field is $\Reeb = \frac{\partial}{\partial z}$, and since the Reeb field is Killing, the components of the Riemannian metric on $M$ are independent of $z$. For each $k$ the horizontal vectors are $\frac{\partial}{\partial x^k} + p^k \, \frac{\partial}{\partial z}$ and $\frac{\partial}{\partial y^k} + q^k \, \frac{\partial}{\partial z}$ for some functions $p^k$ and $q^k$ (chosen to make the vectors orthogonal to $\Reeb$). 
Then if $\pi:M \to N$ is a Riemannian submersion, the metric on $N$ satisfies 
$$\langle \partial_{u^j}, \partial_{u^k} \rangle_N = \langle \partial_{x^j} + p^j \, \partial_z, \partial_{x^k}+p^k\,\partial_z\rangle_M,$$
$$\langle \partial_{u^j}, \partial_{v^k} \rangle_N = \langle \partial_{x^j} + p^j \, \partial_z, \partial_{y^k}+q^k\,\partial_z\rangle_M,$$
$$\langle \partial_{v^j}, \partial_{v^k} \rangle_N = \langle \partial_{y^j} + q^j \, \partial_z, \partial_{y^k}+q^k\,\partial_z\rangle_M.$$
The fact that the right side is independent of $z$ implies that the left side is well-defined on $N$.

Because both Riemannian metrics are right-invariant, it is sufficient to check the submersion condition at the identity. 
The construction of $\Pi$ shows that the null-space of 
$\Pi_*\colon T_{\id}\Diffexcontact^s(M)\to T_{\id}\Diffham^s(N)$ 
consists of vector fields $S_\theta f$ such that $df=0.$  But for any such $f,$
$S_\theta f = fE$ with $f$ constant, so the null-space is one-dimensional and spanned by $E$.
Hence a vector $S_{\contact}f\in T_{\id}\Diffexcontact$ is horizontal if and only if $\llangle S_{\contact}f, \Reeb\rrangle = \int_M f \Laplacian_{\contact}(1) \, d\mu = \int_M f \varphi \, d\mu =0$. Now given any $u\in T_{\id}\Diffham^s(N)$ there is a unique $\tilde{f}\colon N\to\mathbb{R}$ with lift $f:M\to \mathbb{R}$ satisfying $\int_M f \varphi \, d\mu=0$ such that $u=\omega^{\sharp} d\tilde{f}$; then $\Pi_*(S_{\contact}f) = u$, and so $\Pi$ is a Riemannian submersion if and only if $\llangle u, u\rrangle_N = \llangle S_{\contact}f, S_{\contact}f\rrangle_M$ for any such $u$. Since the orbits of $\Reeb$ are assumed to have length $1$, for any Reeb-invariant function $\psi$ on $M$ we have $\int_M \psi \, d\mu = \int_N \tilde{\psi} \, d\nu$ for the induced function $\tilde{\psi}$ on $N$. 

To prove that \eqref{hamiltonianmetric} is well-defined, we consider any vectors $u$ and $v$ in $T_{\id}\Diffham^s(N)$. Then $u=\omega^{\sharp}d\tilde{f}$ and $v=\omega^{\sharp}d\tilde{g}$ for some functions $\tilde{f}$ and $\tilde{g}$ on $N$, which are defined only up to a constant. Now let $f$ and $g$ be the lifts of $\tilde{f}$ and $\tilde{g}$ to $M$, and write $\overline{f}=f+a$ and $\overline{g}=g+b$, where we will demand that $S_{\contact}\overline{f}$ and $S_{\contact}\overline{g}$ are horizontal. Then we must have $\int_M \overline{f} \varphi \, d\mu = \int_M \overline{g}\varphi \, d\mu = 0$, which implies that 
$$ \textstyle a = -(\int_M f\varphi \, d\mu)/(\int_M \varphi \, d\mu) \qquad \text{and}\qquad b = -(\int_M g\varphi \, d\mu)/(\int_M \varphi \, d\mu).$$ 
We compute that 
\begin{align*}
\textstyle \llangle S_{\contact}\overline{f}, S_{\contact}\overline{g}\rrangle &= \textstyle \int_M f\Laplacian_{\contact} g\, d\mu 
+ a \int_M g S_{\contact}(1) \, d\mu\\
&\qquad\qquad  \textstyle + b\int_M f S_{\contact}(1) \, d\mu + ab \int_M S_{\contact}1 \, d\mu \\
&= \textstyle  \int_M f\Laplacian_{\contact}g \, d\mu - (\int_M f\varphi \,d\mu) (\int_M g\varphi\, d\mu)/(\int_M \varphi \, d\mu) \\
&= \textstyle \int_N \tilde{f}\Laplacian_{\contact}\tilde{g} \, d\nu - (\int_N \tilde{f}\tilde{\varphi}\,d\nu)(\int_N \tilde{g} \tilde{\varphi}\,d\nu)/(\int_N \tilde{\varphi}\, d\nu).
\end{align*}
Hence although $\tilde{f}$ and $\tilde{g}$ are only defined up to constants, the metric \eqref{hamiltonianmetric} gives the same result for any choice of the constant and is thus well-defined as an inner product on the Hamiltonian vector fields $u$ and $v$ on $\Diffham^s(N)$. 
\end{proof}

The metric \eqref{hamiltonianmetric} never reduces to the $L^2$ metric studied in \cite{ebinsymplectic}, since that metric depends only on derivatives of the stream function. \eqref{hamiltonianmetric} is essentially a hybrid between the metric in \cite{ebinsymplectic} and the $L^2$ Hofer metric on stream functions (which only generates a pseudometric on the space of Hamiltonian diffeomorphisms; see \cite{ElPol}.) 
In the situation described in Example \ref{geostrophicsphere}, 
the induced metric \eqref{hamiltonianmetric} on symplectic diffeomorphisms of $S^2$ is exactly the $H^1$ metric on stream functions.
We must be careful, however: since the tangent space of $\Diffham(N)$ consists of skew gradients of functions, it can only be identified with functions defined up to some constant. Hence the metric \eqref{hamiltonianmetric} is necessarily degenerate in one direction. Nonetheless we can still write a geodesic equation as an Euler-Arnold equation on the corresponding homogeneous space (see Khesin-Misiolek~\cite{KM}).

\begin{proposition}\label{hamiltonianquotientgeodesic}
On the space $\Diffham^s(N)$ with right-invariant degenerate Riemannian metric given by \eqref{hamiltonianmetric}, the Euler-Arnold equation on the homogeneous quotient space is given by 
\begin{equation}\label{reducedgeodesic} 
\partial_t\Laplacian_{\contact}\tilde{f} + \{\tilde{f}, \Laplacian_{\contact}\tilde{f}\} = 0,
\end{equation}
where $\{\cdot, \cdot\}$ is the symplectic Poisson bracket and $\tilde{f}$ is determined only up to addition by a constant.
\end{proposition}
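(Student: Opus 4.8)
The plan is to derive \eqref{reducedgeodesic} by pushing the geodesic equation \eqref{geodesic} of $\Diffexcontact^s(M)$ down through the Riemannian submersion $\Pi\colon\Diffexcontact^s(M)\to\Diffham^s(N)$ of Theorem \ref{boothbywang}. By the standard correspondence between geodesics of a Riemannian submersion and horizontal geodesics in the total space --- in the form valid for weak metrics on homogeneous spaces, cf.\ \cite{KM} --- the Euler--Arnold flow of the metric \eqref{hamiltonianmetric} through a vector $u_0=\omega^{\sharp}d\tilde f_0\in T_{\id}\Diffham^s(N)$ is the $\Pi$-image of the geodesic of $\Diffexcontact^s(M)$ issuing from $\id$ with horizontal initial velocity $S_{\contact}f_0$, where (by the submersion computation in the proof of Theorem \ref{boothbywang}) $f_0$ is the lift of $\tilde f_0$ normalized by $\int_M f_0\varphi\,d\mu=0$. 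That geodesic obeys \eqref{geodesic}, i.e.\ $\dot\eta=S_{\contact}f\circ\eta$ and $\partial_t\Laplacian_{\contact}f+\{f,\Laplacian_{\contact}f\}=0$.

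First I would verify that the horizontal normalization persists along the flow, so that the projected curve is genuinely a geodesic of \eqref{hamiltonianmetric}. Since $\varphi=\Laplacian_{\contact}(1)$ and $\Laplacian_{\contact}$ is $L^2$-self-adjoint, $\int_M f\varphi\,d\mu=\int_M\Laplacian_{\contact}f\,d\mu$; differentiating in $t$, substituting \eqref{geodesic}, and using $\diver S_{\contact}f=0$ (so that $\int_M\{f,\Laplacian_{\contact}f\}\,d\mu=\int_M S_{\contact}f(\Laplacian_{\contact}f)\,d\mu=0$) shows $\tfrac{d}{dt}\int_M f\varphi\,d\mu=0$. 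Next I would descend the momentum equation: by the remarks following Proposition \ref{bracketisomorphism} the contact bracket of two $\Reeb$-invariant functions corresponds to the symplectic Poisson bracket of the associated functions on $N$ under $\tilde f(\pi(x)):=f(x)$, and by the proof of Theorem \ref{Scontactadj} the operator $\Laplacian_{\contact}$ descends to an elliptic operator on $N$ under the same identification. Applying these, $\partial_t\Laplacian_{\contact}f+\{f,\Laplacian_{\contact}f\}=0$ becomes $\partial_t\Laplacian_{\contact}\tilde f+\{\tilde f,\Laplacian_{\contact}\tilde f\}=0$ on $N$, which is \eqref{reducedgeodesic}. The indeterminacy flagged in the statement is then accounted for by observing that $\tilde f$ and $\tilde f+c$ define the same Hamiltonian vector field $\omega^{\sharp}d\tilde f$, so \eqref{reducedgeodesic} is to be read for the distinguished (horizontal-lift) representative, equivalently modulo the kernel of $\tilde f\mapsto\omega^{\sharp}d\tilde f$; when $\tilde\varphi$ is constant --- as for the Hopf fibration over $S^2$, cf.\ Example \ref{geostrophicsphere} --- \eqref{reducedgeodesic} is moreover literally invariant under $\tilde f\mapsto\tilde f+c$.

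The step I expect to be the main obstacle is the geometric bookkeeping forced by the degeneracy of \eqref{hamiltonianmetric}: making rigorous that ``the Euler--Arnold flow of this degenerate metric on the homogeneous quotient'' coincides with ``the $\Pi$-image of a horizontal geodesic upstairs.'' To sidestep the weak-pseudo-metric formalism entirely one can instead establish \eqref{reducedgeodesic} by a direct Euler--Arnold computation in the style of Theorem \ref{localgeodesic}. Writing $u=\omega^{\sharp}d\tilde f$ and reading \eqref{hamiltonianmetric} off, the inertia operator sends $u$ to the momentum $\mathfrak m_{\tilde f}=\Laplacian_{\contact}\tilde f-\big(\textstyle\int_N\tilde\varphi\tilde f\,d\nu\big)\tilde\varphi\big/\!\int_N\tilde\varphi\,d\nu$, which satisfies $\int_N\mathfrak m_{\tilde f}\,d\nu=0$ and still determines $u$ (via $u=\omega^{\sharp}d\Laplacian_{\contact}^{-1}\mathfrak m_{\tilde f}$); using $[\omega^{\sharp}d\tilde f,\omega^{\sharp}d\tilde h]=-\omega^{\sharp}d\{\tilde f,\tilde h\}$ and the cyclic identity $\int_N\{\tilde f,\tilde h\}\psi\,d\nu=\int_N\{\psi,\tilde f\}\tilde h\,d\nu$ one computes $\ad^*_u\mathfrak m_{\tilde f}=\{\tilde f,\mathfrak m_{\tilde f}\}$, so the Euler--Arnold equation $\partial_t\mathfrak m_{\tilde f}+\ad^*_u\mathfrak m_{\tilde f}=0$ reads $\partial_t\mathfrak m_{\tilde f}+\{\tilde f,\mathfrak m_{\tilde f}\}=0$. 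Choosing at each time the representative $\tilde f$ with $\int_N\tilde\varphi\tilde f\,d\nu=0$ makes $\mathfrak m_{\tilde f}=\Laplacian_{\contact}\tilde f$, and the equation collapses to \eqref{reducedgeodesic}; this simultaneously exhibits why the normalization is needed and what the $\tilde\varphi$-indeterminacy is.
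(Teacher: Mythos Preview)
Your proposal is correct. Your backup ``direct Euler--Arnold'' computation in the final paragraph is exactly the paper's proof: the paper writes $u=\omega^{\sharp}d\tilde f$, uses $\ad_uv=-\omega^{\sharp}d\{\tilde f,\tilde g\}$, expands the Euler--Arnold equation for the metric \eqref{hamiltonianmetric} to obtain
\[
\partial_t\Laplacian_{\contact}\tilde f-\tfrac{\int_N\tilde\varphi\,\partial_t\tilde f\,d\nu}{\int_N\tilde\varphi\,d\nu}+\{\tilde f,\Laplacian_{\contact}\tilde f\}-\tfrac{\int_N\tilde\varphi\tilde f\,d\nu}{\int_N\tilde\varphi\,d\nu}\{\tilde f,\tilde\varphi\}=0,
\]
and then fixes the representative with $\int_N\tilde f\tilde\varphi\,d\nu=0$ to collapse to \eqref{reducedgeodesic}. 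Your formulation via the mean-zero momentum $\mathfrak m_{\tilde f}=\Laplacian_{\contact}\tilde f-\big(\int_N\tilde\varphi\tilde f\,d\nu\big)\tilde\varphi/\!\int_N\tilde\varphi\,d\nu$ is the same computation packaged slightly more cleanly.

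Your primary route, pushing \eqref{geodesic} down through the submersion $\Pi$ of Theorem \ref{boothbywang}, is a genuinely different and arguably more conceptual argument: rather than recomputing $\ad^{\star}$ for the metric \eqref{hamiltonianmetric}, you invoke the geodesic correspondence for Riemannian submersions and verify directly that horizontality (the condition $\int_M f\varphi\,d\mu=0$) is conserved along \eqref{geodesic}. This buys you the result essentially for free once Theorem \ref{boothbywang} and Theorem \ref{localgeodesic} are in hand, and it makes transparent \emph{why} the two equations agree. The cost, as you note, is the need to justify the submersion--geodesic correspondence in the weak/degenerate setting; your appeal to \cite{KM} is appropriate here, and your explicit conservation-law check supplies the key ingredient that horizontal geodesics stay horizontal.
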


\begin{proof}
We just need to compute the Euler-Arnold equation $u_t + \ad_u^{\star}u = 0$ for $u=\omega^{\sharp}d\tilde{f}$ in the metric \eqref{hamiltonianmetric}. Using $\ad_uv = -\omega^{\sharp}\{\tilde{f},\tilde{g}\}$ for $u=\omega^{\sharp}d\tilde{f}$ and $v=\omega^{\sharp}d\tilde{g}$, we can easily compute that the Euler-Arnold equation is
$$ \partial_t\Laplacian_{\contact}\tilde{f} - \tfrac{\int_N \tilde{\varphi} \partial_t \tilde{f}\,d\nu}{\int_N \tilde{\varphi}\,d\nu} + \{\tilde{f}, \Laplacian_{\contact}\tilde{f}\} - \tfrac{\int_N \tilde{\varphi}\tilde{f}\,d\nu}{\int_N \tilde{\varphi}\,d\nu} \{\tilde{f}, \tilde{\varphi}\} = 0.$$
Choosing a representative $\tilde{f}$ such that $\int_N \tilde{f}\tilde{\varphi}\,d\nu=0$, we obtain \eqref{reducedgeodesic}.
\end{proof}

The geodesic equation \eqref{geodesic} is equivalent to the equation \eqref{reducedgeodesic}
for a time-dependent function $\tilde{f}\colon N\to\mathbb{R}$, where $\Laplacian_{\contact}$ is the inherited operator defined as in Theorem \ref{boothbywang}. Global existence for the quantomorphism geodesic equation then reduces to showing that this equation has global solutions for sufficiently smooth initial data: since we want $S_{\contact}f$ to initially be in $H^s$, 
we assume that $f\in H^{s+1}(M,\mathbb{R})$. Global existence follows in much the same way as in \cite{ebinsymplectic}: we first bound the Sobolev norms of $f$ in terms of the $C^1$ norm, then establish an absolute maximum for the $C^1$ norm. In that paper the proof was very brief; here we give more detail. 

\begin{theorem}\label{globalgeodesic}
If the initial condition $f$ is in $H^{s+1}_{\Reeb}(M, \mathbb{R})$ for $s>\dim{N}/2+1$ (i.e., if $\tilde{f}$ is in $H^{s+1}(N,\mathbb{R}$)), then solutions of equation \eqref{reducedgeodesic} exist for all time. Hence the exponential map given by Theorem \ref{localgeodesic} is defined on the entire tangent space $T_{\id}\Diffsexcontact(M)$. Thus by right invariance it is defined on all of $T\Diffexcontact^s(M)$. 
\end{theorem}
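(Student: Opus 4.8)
The plan is to run the classical global–existence argument for two–dimensional ideal hydrodynamics, transplanted to the closed symplectic manifold $N$ with the operator $\Laplacian_{\contact}$ playing the role of $-\Laplacian$. First I would invoke the blow–up criterion for ODEs on Banach manifolds (see \cite{lang}, Chapter~4): the local solution furnished by Theorem~\ref{localgeodesic} extends to a maximal interval $[0,T_*)$, and if $T_*<\infty$ then $\|\tilde f(t)\|_{H^{s+1}(N)}$ is unbounded as $t\uparrow T_*$. Since $\Laplacian_{\contact}$ is a positive self–adjoint elliptic operator of order two on $N$ (by the proof of Theorem~\ref{Scontactadj}), elliptic regularity makes the quantities $\|\tilde f(t)\|_{H^{s+1}(N)}$, $\|S_{\contact}f(t)\|_{H^s}$ and $\|m(t)\|_{H^{s-1}(N)}$ mutually equivalent, where $m:=\Laplacian_{\contact}\tilde f$. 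So it suffices to bound $\|m(t)\|_{H^{s-1}(N)}$ on every finite interval.

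The second step is a conservation law. Writing $X:=\omega^{\sharp}d\tilde f$ for the associated Hamiltonian vector field on $N$ (so that $\{\tilde f,\cdot\}=\pm X(\cdot)$ and $\diver X=0$), equation \eqref{reducedgeodesic} becomes $\partial_t m+X(m)=0$, whence $m(t)=m_0\circ\zeta(t)^{-1}$ for the flow $\zeta$ of $X$. That flow preserves the volume $\nu$, so $\|m(t)\|_{L^p(N)}=\|m_0\|_{L^p(N)}$ for all $p\in[1,\infty]$; set $M_0:=\|m_0\|_{L^\infty}$. Feeding $\|m(t)\|_{L^p}\le C M_0$ into elliptic regularity for $\Laplacian_{\contact}$ and the Sobolev embedding $W^{2,p}(N)\hookrightarrow C^{1,\gamma}(N)$ for large $p$ yields the \emph{time–independent} bound $\|\tilde f(t)\|_{C^{1,\gamma}(N)}\le C M_0$ on the $C^1$ norm of the stream function.

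The third step is a Gronwall estimate with logarithmic loss. Combining the transport equation for $m$ with the elliptic relation $\tilde f=\Laplacian_{\contact}^{-1}m$, a commutator/energy estimate on the closed manifold $N$ together with a logarithmic Sobolev inequality of Beale--Kato--Majda type yields
\[ \frac{d}{dt}\,\|m(t)\|_{H^{s-1}}\ \le\ C\bigl(1+\|m(t)\|_{L^\infty}\bigr)\bigl(1+\log^{+}\|m(t)\|_{H^{s-1}}\bigr)\,\|m(t)\|_{H^{s-1}}. \]
Inserting the conserved value $\|m(t)\|_{L^\infty}=M_0$ and setting $y(t):=\log\bigl(e+\|m(t)\|_{H^{s-1}}\bigr)$, we obtain $\dot y\le C(1+M_0)(1+y)$, so Gronwall gives $y(t)\le(1+y(0))\,e^{C(1+M_0)t}$. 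Thus $\|m(t)\|_{H^{s-1}}$ grows at most doubly exponentially and is finite on every finite interval, and the blow–up criterion forces $T_*=\infty$.

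Finally, for the statements about the exponential map: any $v\in T_{\id}\Diffexcontact^s(M)$ equals $S_{\contact}f$ for some $f\in H^{s+1}_{\Reeb}(M,\mathbb{R})$ by Lemma~\ref{bundlelemma}, descending to $\tilde f\in H^{s+1}(N,\mathbb{R})$; the global solution just obtained keeps $\tilde f(t)$ in $H^{s+1}(N)$, so $u(t)=S_{\contact}f(t)\in H^s$ is locally bounded in time, its flow $\eta(t)$ exists for all $t$ and stays in $\Diffexcontact^s(M)$ (since $u(t)\in T_{\id}\Diffexcontact^s$ and $\Diffexcontact^s$ is a subgroup), and by Theorem~\ref{localgeodesic} this $\eta(t)$ is the geodesic with $\eta'(0)=v$; hence $\exp_{\id}$ is defined on all of $T_{\id}\Diffexcontact^s(M)$. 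Since the metric on $\Diffexcontact^s$ is right–invariant and right translations are smooth isometries, $\exp_{\eta}(w)=R_{\eta}\bigl(\exp_{\id}(w\circ\eta^{-1})\bigr)$ is then defined for every $\eta\in\Diffexcontact^s$ and every $w\in T_{\eta}\Diffexcontact^s$, i.e.\ $\exp$ is defined on all of $T\Diffexcontact^s(M)$. The main obstacle is the displayed estimate in the third step: one must close the energy estimate with only a \emph{logarithmic}, not polynomial, dependence on $\|m\|_{H^{s-1}}$, since that is exactly what lets Gronwall produce a finite (doubly exponential) bound instead of finite–time blow–up; the remaining steps are, respectively, standard ODE theory, a conservation law, and soft elliptic estimates.
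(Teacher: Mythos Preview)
Your strategy is correct and is a well-known alternative to the one the paper uses; the two proofs diverge after the shared observation that $m=\Laplacian_{\contact}\tilde f$ is transported, so $\lVert m(t)\rVert_{L^\infty}$ is conserved.

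The paper proceeds in the Yudovich--Kato style: it analyzes the Schwartz kernel of $\Laplacian_{\contact}^{-1}$ on $N$ to show $d\tilde f$ (hence the velocity $X$) is \emph{quasi-Lipschitz} uniformly in time, then solves the ODE inequality $\phi'\le K\phi(1+\log(R/\phi))$ to conclude that the \emph{inverse flow} is uniformly $C^\alpha$. From $m(t)=m_0\circ\eta(t)^{-1}\in C^\alpha$ and Schauder theory one gets $\tilde f(t)\in C^{2+\alpha}$, hence a \emph{time-uniform} bound on $\lVert S_{\contact}f\rVert_{C^1}$, and the final $H^{s-1}$ energy estimate closes with a \emph{linear} Gronwall, giving single-exponential growth of $\lVert m\rVert_{H^{s-1}}$. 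Your route skips the flow-regularity analysis altogether and closes directly via a BKM/Brezis--Gallouet logarithmic bound on $\lVert\nabla X\rVert_{L^\infty}$, paying with a log-Gronwall and double-exponential growth. Your argument is shorter and more in the spirit of modern energy methods; the paper's buys a sharper growth rate and, along the way, a uniform $C^{1+\alpha}$ bound on the velocity that is useful for other purposes.

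One remark on your second step: the $W^{2,p}\hookrightarrow C^{1,\gamma}$ bound on $\tilde f$ only controls $\lVert X\rVert_{L^\infty}$, not $\lVert\nabla X\rVert_{L^\infty}$; it plays no role in closing the energy estimate, which is handled entirely by the logarithmic inequality in your third step. You could drop that sentence without loss. The displayed inequality in your third step does hold on a closed manifold for $s-1>\dim N/2$ (which is exactly the hypothesis), since $\nabla X$ is a pseudodifferential operator of order zero applied to $m$, so $\lVert\nabla X\rVert_{L^\infty}\le C\bigl(1+\lVert m\rVert_{L^\infty}\log(e+\lVert m\rVert_{H^{s-1}})\bigr)$ by the standard Brezis--Wainger-type argument.
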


\begin{proof}

In order to prove this theorem we recall that the space $H^{s \pm 1}_E (M, \mathbb{R})$  is naturally identified with $H^{s \pm 1}(N, \mathbb{R}).$  With this identification we get
$$ \Laplacian_{\contact}: H^{s + 1}(N, \mathbb{R})\rightarrow H^{s - 1}(N, \mathbb{R}) $$

  We shall deal with three time dependent quantities whose estimates will be interdependent.  We have the stream function $f(t) \in H^{s+1}_{\Reeb}(M, \mathbb{R}) \cong H^{s+1}(N, \mathbb{R})$, the velocity field $u(t) = S_{\theta} f(t) \in  T_{id} \Diffexcontact^s(M) $ and the Lagrangian flow $\eta(t)$ in $\Diffexcontact^s(M)$.  Let $(-T_b, T_e)$ be the maximum time interval on which $f$ (and therefore $u$ and $\eta$) exists.  If $T_e$ were finite, then we would have $\lim_{t\nearrow T_e} \|f(t)\|_{s+1} = \infty$, because  if $\|f(t)\|_{s+1}$ remained bounded then $\|u(t)\|_s $ would be bounded and then $T_e$ would not be maximal (see \cite{hartman}).  Hence we need only show that $\|f(t)\|_{s+1} $ is bounded on any finite time interval.  
  
 To do this we first note that
$$ \frac{d}{dt}( \Laplacian_{\contact} f(t) \circ \eta(t)) =
(\partial_t \Laplacian_{\contact} f + S_{\theta}f ( \Laplacian_{\contact} f )) \circ \eta = 0  $$
by \eqref{geodesic}.
Therefore $\Laplacian_{\contact} f(t) \circ \eta(t) = \Laplacian_{\contact} f(0)$, so in particular the $C^0$-norm of $\Laplacian_{\contact} f (t)$ is constant in time.  Using this we will find a bound for $df$, or equivalently for $S_{\contact} f.$

Recall from the proof of Theorem \ref{Scontactadj} that the contact Laplacian $\Laplacian_{\contact}\colon H^{s+1}(N,\mathbb{R}) \to H^{s-1}(N,\mathbb{R})$ is an isomorphism.
From the theory of elliptic operators (\cite{taylor}, Chapter 7, Proposition 2.2 for $N = \mathbb{R}^n$ and Chapter 7, Section 10 for $N$ any compact manifold) we find that its inverse is a pseudodifferential operator whose Schwartz kernel $k: N \times N \rightarrow \mathbb{R} $ is smooth off of the diagonal and obeys the estimate
\begin{equation} \label{pseudo-est} |\partial_x^{\beta} k(x,y)| \leq K \rho(x,y)^{-2n+2-|\beta|} \quad {\rm for} -2n+2-|\beta| < 0 \end{equation}
where $\partial_x $ means a partial derivative with respect to the first variables of $k$ and $\rho(x,y) $ is the distance from $x$ to $y$ defined by the Riemannian metric on $N$.\footnote{In the inequalities that follow $K$ will always denote some positive constant, but it may different in different inequalities.} 
From this if $n>1$, we can estimate
$$ f(x) = \int_N  k(x,y) \Laplacian_{\contact} f(y) dy $$ and
\begin{equation}
\label{df} df(x) = \int_N d_x k(x,y) \Laplacian_{\contact} f(y) dy
\end{equation}
where $d_x$ is the differential with respect to the $x$-variables and $dy$ indicates integration with respect to $y$ using the Riemannian volume element of $N.$\footnote{In the sequel we will sometimes write $dk$ for $d_x k.$}
We have $|k(x,y)| \leq K \rho(x,y) ^{-2n+2}$ and $|d_x k(x,y)| \leq K \rho(x,y) ^{-2n+1},$ so these integrals are  bounded by a constant times $\| \Laplacian_{\contact} f(t)\|_{C^0}$ which, as we have seen, is independent of $t$. With this and formula \eqref{ScontactDarboux} we see that $u=S_{\theta} f$ is bounded uniformly in time as well.
If $n=1$ the estimate for $df$ is still valid, but we must use a different method to estimate $f$.  First we note that if $|df|<K$ then 
\begin{equation}\label{minmaxKR}
\max{f}-\min{f}<KR
\end{equation}
where $R$ is the diameter of $N$.  Also we see from \eqref{reducedgeodesic} that 
$\int_N \Laplacian_{\contact} f$ is constant in time.
Furthermore
$$ \int_N \Laplacian_{\contact} f = \int_N f\Laplacian_{\contact}1=
\int_N f \varphi$$ and $\int_N \varphi = \int_M \lvert E \rvert^2 >0$. Also $\int_N f\varphi \geq \int_N (\min{f}) \varphi = \min{f} \int_N \varphi.$ Thus $\min f$ is bounded so by \eqref{minmaxKR} $f$ is also bounded, and of course all the bounds are uniform in time.

We proceed to seek a time-uniform Lipschitz bound for $df$, but in fact we will be able to find only a quasi-Lipchitz bound, as we shall now explain.  Since $N$ is compact, we can find a positive $\delta$ such that each point of $N$ has a normal coordinate neighbourhood ball of radius at least $\delta.$  Then for $y \in N$ with $\rho(x,y) < \delta$, we have a unique minimal geodesic $\mingeodesic$ parametrized so that $\mingeodesic(0) = x$ and $\mingeodesic (1) = y$. We let  $ b = |\mingeodesic'(\tau)|,$ so that $b = \rho(x,y)$. Also we parallel translate $ df(x)$ along $\mingeodesic$ to get some $df'(y) \in T^* _y N.$ Then we shall estimate
$|df(y) - df'(y)|$ where $|\; \: |$ is the norm on $T^* _y N.$

To do this we use the formula \eqref{df} for $df$, and we parallel translate each $d_x k(x,z)$ along $\mingeodesic$ to get $dk'(y,z) \in T^* _y N.$  In this way we get $$ df'(y) = \int_N dk'(y,z) \Laplacian_{\contact} f(z) \,dz$$ and we find
$$df'(y) - df(y) = \int_N (dk'(y,z) - dk(y,z)) \Laplacian_{\contact} f(z) \,dz$$

Following \cite{kato}, Lemma 1.4, we divide this integral as follows:
Let $\Sigma = B_{2b} (x),$ the ball of radius $2b$ about $x$.
Then $\int_N = \int_{\Sigma} + \int_{N-\Sigma}$. For the integral over $\Sigma$, we have the estimate
$$\int_{\Sigma} dk'(y,z) \Laplacian_{\contact} f(z) \,dz \leq K \lVert\Laplacian_{\contact} f \rVert_{C^0} \int_{\Sigma} \rho(x,z)^{1-2n} \,dz  \leq 2bK \lVert\Laplacian_{\contact} f\rVert_{C^0}.$$
Also $\Sigma \subset B_{3b}(y)$ and
$$ \int_{\Sigma} dk(y,z) \Laplacian_{\theta} f(z)\,dz \leq
K \lVert\Laplacian_{\theta} f \rVert_{C^0} \int_{\Sigma} \rho(y,z)^{1-2n} \,dz 
\leq 3bK \lVert \Laplacian_{\theta} f \rVert_{C^0} $$
Thus 
\begin{equation} \label{sigest} \int_{\Sigma} (dk'(y,z) - dk(y,z)) \Laplacian_{\theta} f(z) \,dz \leq 5bK \lVert \Laplacian_{\theta} f \rVert_{C^0}
\end{equation}

The estimate of the integral over $N-\Sigma$
is more subtle.  If we let $P(\tau)$ denote parallel translation along $\mingeodesic$ from $\mingeodesic(\tau)$ to $\mingeodesic(b)$, so that
$P(\tau): T^*_{\mingeodesic(\tau)} N \rightarrow T^*_y N,$ we find that
\begin{eqnarray}
dk'(y,z) - dk(y,z)& = &P(b)(d k(x,z)) -dk(y,z)\\
& = & \int_0^b P(\tau)
\nabla_{\mingeodesic'(\tau)}dk(\mingeodesic(\tau),z)dz \nonumber
\end{eqnarray}
Therefore
\begin{eqnarray}
 |dk'(y,z) -dk(y,z)| & \leq & \max_{0\leq \tau \leq b} \{|\nabla_{\mingeodesic'(\tau)} dk(\mingeodesic(\tau),z)|\} \\
 & \leq & Kb(\rho(x,z)-b)^{-2n} \nonumber
\end{eqnarray}

We note that $R= {\rm diam}(N).$  Hence
\begin{eqnarray}
\int_{N-\Sigma} |dk'(y,z)-dk(y,z)| & \leq & Kb\int_{N-\Sigma} (\rho(x,z)-b)^{-2n}dz\\
& \leq & Kb \int_{2b}^R (r-b)^{-1}dr \nonumber \\
& = & Kb(\log(R-b) - \log b) \nonumber \\
& \leq & Kb \log(R/b) \nonumber
\end{eqnarray}

Combining this with \eqref{sigest} we find
$$
\int_N |dk'(y,z) -dk(y,z)| \,dz \leq K(5\rho(x,y) + \rho(x,y) \log(R/\rho(x,y))
$$
for $\rho(x,y) < \delta.$  By increasing $K$ we simplify this inequality to
$$
\int_N|dk'(y,z)-dk(y,z) | \,dz \leq K \rho(x,y)(1+ \log(R/\rho(x,y)).
$$
Thus we find that if $\rho(x,y)<\delta,$
\begin{equation} \label{quasi}
|df'((y) - df(y)| \leq K\rho(1+\log(R/\rho(x,y))
\end{equation}
where $K$ is independent of $t$ as before. With this inequality we say that $df$ is quasi-Lipschitz.  Also since $df(x)$ is bounded independently of $x$ and $t$ we find that by further increasing $K$
we get \eqref{quasi} for all $x,y \in N;$ that is, we can drop the restriction $\rho(x,y) < \delta.$

Since $u = S_{\theta} f$, our bound for $df$ gives the same quasi-Lipschitz bound for $u$, again uniformly in $t.$  Using this bound we can find a positive $\alpha$ for which the flow $\eta(t)$ of $u$ is $C^{\alpha}.$ However we will need to show that $\eta(t)^{-1}$ is $C^{\alpha}$.  Fortunately we can do this by the same method.

Given $t_0 \in [0,T_e)$ we define a time dependent vector field $v$ on
$M$ by $v(t) = -u(t_0-t)$. Then if $\sigma$ is the flow of $v$, it is easy to see that the maps 
$\sigma\big(t, \eta(t_0,x)\big)$ and $\eta(t_0-t,x)$ satisfy the same differential equation, and since $\sigma\big(0, \eta(t_0,x)\big) = \eta(t_0,x)$, they must be equal for all times $t\in [0,t_0]$. 
Hence in particular $\sigma(t_0)=\eta(t_0)^{-1}.$ 
%
We proceed to show that $\sigma(t_0)$ is $C^{\alpha}$.

Fix $x$ and $y$ in $M$. Let $\mingeodesic\colon[0,t_0]\times [0,1]$ be the map such that $\tau\mapsto \mingeodesic(t,\tau)$ is the minimal geodesic between $\sigma(t,x)$ and $\sigma(t,y)$ with $\mingeodesic(t,0)=\sigma(t,x)$ and $\mingeodesic(t,1)=\sigma(t,y)$. 
Define 
$$
\phi(t) = \rho(\sigma(t,x), \sigma(t,y)) = \int_0^1 \big\lvert \tfrac{\partial \mingeodesic}{\partial \tau}(t,\tau)\big\rvert \,d\tau.
$$
Then 
\begin{align*}
\phi'(t) &= \frac{d}{dt}\left(\int_0^1 \big\lvert \tfrac{\partial \mingeodesic}{\partial \tau}(t,\tau)\big\rvert \, d\tau\right) \\
&= \int_0^1 \frac{1}{\lvert \frac{\partial \mingeodesic}{\partial \tau}(t,\tau)\rvert} \, \big\langle
\tfrac{\partial\mingeodesic}{\partial \tau}(t,\tau), \tfrac{D}{\partial t} \tfrac{\partial \mingeodesic}{\partial \tau}(t,\tau)\big\rangle \, d\tau
\end{align*}
But 
$\tfrac{D}{\partial t} \tfrac{\partial\mingeodesic}{\partial \tau} = \tfrac{D}{\partial \tau} \tfrac{\partial\mingeodesic}{\partial t}$ by general properties of surface maps, and $\tfrac{D}{\partial \tau} \tfrac{\partial\mingeodesic}{\partial \tau} = 0$ since each $\tau\mapsto \mingeodesic(t,\tau)$ is a geodesic, and thus an integration by parts yields
$$
\phi'(t) = \frac{1}{\lvert \tfrac{\partial \mingeodesic}{\partial \tau}(t,\tau)\rvert} \big\langle \tfrac{\partial \mingeodesic}{\partial \tau}(t,\tau), \tfrac{\partial \mingeodesic}{\partial t}(t,\tau)\big\rangle\big\vert_{\tau=0}^{\tau=1},
$$
using the fact that $\lvert \tfrac{\partial\mingeodesic}{\partial\tau}(t,\tau)\rvert$ is constant in $\tau$ since $\mingeodesic$ is a geodesic in $\tau$. 

Now $\frac{\partial \mingeodesic}{\partial \tau}$ is parallel along $\mingeodesic$, and since the parallel transport
$P$ from $\mingeodesic(0)$ to $\mingeodesic(1)$ preserves inner products, we have 
$$ \langle \tfrac{\partial \mingeodesic}{\partial \tau}(t,0), v\big(t, \sigma(t,x)\big)\rangle = 
\langle \tfrac{\partial\mingeodesic}{\partial \tau}(t,1), Pv\big(t,\sigma(t,x)\big)\rangle. $$
Thus 
\begin{align*} 
\phi'(t) &= \frac{1}{\lvert \tfrac{\partial \mingeodesic}{\partial \tau}(t,\tau)\rvert} \big\langle \tfrac{\partial \mingeodesic}{\partial \tau}(t,1), v\big(t, \sigma(t,y)\big) - Pv\big(t, \sigma(t,x)\big)\big\rangle \\
&\le \lvert v\big(t, \sigma(t,y)\big)
-Pv\big(t,\sigma(t,x)\big)\rvert.
\end{align*}
%
Now since $v$ like $u$ is quasi-Lipschitz on $[0,t_0]$, we have a constant $K$ such that
\begin{equation}
\label{phidiffineq} \phi'(t) \leq K \phi(t)\left(1+ \log \left(\frac{R}{\phi(t)}\right)\right),
\end{equation}
where the constants $R$ and $K$ do not depend on $t$ or $t_0$. 

We proceed to estimate $\phi(t)$.  Let $\psi(t) = \log(\phi(t)/R)$ so $\psi' = \phi'/\phi.$
Then from \eqref{phidiffineq} we get $\psi' \leq K(1-\psi)$.
Integrating this we find
$$\psi(t) \leq \psi(0)e^{-Kt} + 1-e^{-Kt}.$$
Exponentiating this inequality and noting that $\phi(0) \leq R$ we find that
$$\frac{\phi(t)}{R}\leq \left(\frac{\phi(0)}{R}\right)^{e^{-Kt}}e^{1-e^{-Kt}}
\leq \left(\frac{\phi(0)}{R}\right)^{e^{-KT_e}}e
$$
Thus $\phi(t) \leq R^{1-e^{-KT_e}} e \phi(0)^{e^{-KT_e}}$,
so letting $\alpha = e^{-KT_e}$ and $L = eR^{1-e^{-Kt_e}}$,
we get $\phi(t) \leq L \phi(0)^{\alpha}$
and hence
\begin{equation} \label{rhoineq}
\rho(\sigma(t_0,x),\sigma(t_0,y)) \leq L \rho(x,y)^{\alpha}.
\end{equation}
The constants $L$ and $\alpha$ do not depend on the choice of $t_0$, so the estimate \eqref{rhoineq}
holds for all $t_0\in [0,T_e)$. 
From \eqref{rhoineq} and the equation
$$ \Laplacian_{\contact} f(t) = \Laplacian_{\contact} f(0) \circ \sigma(t)$$
we find that $\Laplacian_{\contact} f(t)$ is bounded uniformly in $t$ in
$C^{\alpha}(M,\mathbb{R})$, and thus by standard elliptic theory we get
$f(t)$ bounded in $C^{2+\alpha}(M,\mathbb{R})$,
from which it follows that $S_{\contact} f$ is bounded in $C^{1+\alpha}(TM)$.

Now we show $f(t)$ to be bounded in the $H^{s+1}$ topology, or equivalently that $\Laplacian_{\contact} f(t)$ is bounded in $H^{s-1}.$
We note that
$$ \partial_t \Laplacian_{\contact} f = -S_{\contact}f (\Laplacian_{\contact} f)
$$
so
$$\frac{d}{dt} \int_M (\Laplacian_{\contact} f)^2 \mu = -\int_M S_{\contact} f (\Laplacian_{\contact} f)^2 \mu = \int_M (\Laplacian_{\contact} f)^2 \diver(S_{\contact} f )\mu = 0
$$
Taking $s-1$ spatial derivatives\footnote{ ~Powers of $\nabla$ are defined in a standard way: for any function $f,$ $\nabla f$ is a section of $TM$, $\nabla^2 f$ is a section of $TM \otimes T^*M$ and for any $k$, $\nabla^k f$ is a section of $TM \otimes (T^*M)^{k-1}$.  Inner products are defined by the induced Riemannian metric.} we get
\begin{align*}
\frac{d}{dt} \int_M |\nabla^{s-1} \Laplacian_{\contact} f |^2 \mu & =
-2 \int_M \langle \nabla_{S_{\contact}f} \nabla^{s-1} \Laplacian_{\contact} f, \nabla^{s-1} \Laplacian_{\contact} f\rangle \mu\\
 &  \qquad\qquad -2 \int_M \langle[\nabla_{S_{\contact} f}, \nabla^{s-1}]\Laplacian_{\contact} f,
\nabla^{s-1} \Laplacian_{\contact} f\rangle \mu,
\end{align*}
where $[ \;,\;]$ denotes the commutator.
The first term on the right vanishes, and for the second we use the standard estimate
$$ \| \nabla^l (hg) - h \nabla^l g\|_{H^0} \leq K \big(\|h\|_{H^l} \|g\|_{C^0} + \|\nabla h \|_{C^0} \|g\|_{H^{l-1}}\big),
$$ 
which can be found in \cite{taylor} Chapter 13, Proposition 3.7.
We let $l=s-1,$ $h = S_{\contact} f$ and $ g = \Laplacian_{\contact} f$
and obtain
$$\int_M \big\langle[\nabla_{S_{\contact} f}, \nabla^{s-1}]\Laplacian_{\contact} f,
\nabla^{s-1} \Laplacian_{\contact} f\big\rangle \mu \leq K \|S_{\contact}f\|_{C^1} \| \Laplacian_{\contact} f \|^2_{H^{s-1}}.
$$

Thus we get
$$
\frac{d}{dt} \|\Laplacian_{\contact} f \|^2_{H^{s-1}} \leq K \|S_{\contact}f\|_{C^1} \| \Laplacian_{\contact} f \|^2_{H^{s-1}}
$$
from which it follows that $\|\Laplacian_{\contact} f \|_{H^{s-1}}$
remains bounded on $[0,T_e),$ since $\lVert S_{\contact}f\rVert_{C^0}$ is bounded uniformly in time. The same reasoning combined with a time reversal gives boundedness on $(-T_b,0].$ This completes the proof.
\end{proof}

\begin{rem}
We also get $C^{\infty}$ geodesics for all time.  If the initial datum is $C^{\infty}$, the geodesic exists in $\Diffsexcontact.$  But for any $s' > s,$ there is also a geodesic in $\Diff^{s'}_q \subset \Diffsexcontact$.  By uniqueness of solutions for ordinary differential equations, the two geodesics must coincide.  Since $s'$ can be arbitrarily large the geodesic must be $C^{\infty}$.
\end{rem}

We have shown in Example \ref{geostrophicsphere} that the geodesic equation on $\Diffexcontact(M)$ reduces, with a certain choice of metric on $M$, to the equation 
$$ (\Laplacian f-\alpha^2 f )_t + \{ f, \Laplacian f\} = 0,$$
which is a simplified version of the quasigeostrophic equation; when $M=\mathbb{R}^2$, this is the $f$-plane approximation. Now we finally want to discuss the real quasigeostrophic equation on $S^2$, which is 
\begin{equation}\label{realquasigeo}
(\Laplacian f - \alpha^2 f)_t + \{f, \Laplacian f - \beta \cos{\theta}\} = 0
\end{equation}
for some parameters $\alpha$ and $\beta$.
Here we use spherical coordinates in which $z=\cos{\theta}$ is the height function on the $2$-sphere. See \cite{kuo, CS, STS} for derivations of this equation. 
The equation \eqref{realquasigeo} can be written in conservation form in terms of the ``potential vorticity'' $\omega = \Laplacian f - \alpha^2 f - \beta \cos{\theta}$ as 
\begin{equation}\label{potentialvorticity}
\omega_t + u(\omega) = 0
\end{equation}
where $u = S_{\contact}f$. Now let us show how to derive this equation in our geometric context as an Euler-Arnold equation.

\begin{rem}\label{realquasigeostrophic}
Consider the Lie algebra $T_{\id}\Diffexcontact(M) \cong \Func_{\Reeb}(M)$, with bracket given by $[S_{\contact}f, S_{\contact}g] = S_{\contact}\{f,g\}$, where $\{\cdot,\cdot\}$ is  the contact bracket defined by \eqref{contactbracket}. Let $\varphi\colon M\to\mathbb{R}$ be an arbitrary smooth function, and construct a central extension structure on the product $T_{\id}\Diffexcontact(M) \times \mathbb{R}$ by the rule 
\begin{equation}\label{CEbracket}
[(S_{\contact}f, a), (S_{\contact}g, b)] = \left(S_{\contact}\{f,g\}, \int_M \varphi \{f,g\} \, d\mu\right).
\end{equation}
It is easy to check that this bracket satisfies antisymmetry and the Jacobi identity, so that it is a genuine Lie bracket on a Lie algebra.

Now define an inner product on $T_{\id}\Diffexcontact(M)\times \mathbb{R}$ by 
\begin{equation}\label{CEmetric}
\llangle (S_{\contact}f, a), (S_{\contact}g, b)\rrangle = \int_M \langle S_{\contact}f, S_{\contact}g\rangle \, d\mu + ab.
\end{equation}
Then it is straightforward to compute that the Euler-Arnold equation for this Lie algebra and inner product is the system
\begin{align*}
\Laplacian_{\contact}f_t + \{f, \Laplacian_{\contact}f\} + a \{ f, \varphi\} &= 0, \\
a_t &= 0.
\end{align*}
In particular if $M=S^3$ with Berger metric as in Example \eqref{geostrophicsphere}, and if $\varphi$ is the height function on the sphere $S^2$, then we can obtain the genuine quasigeostrophic equation on $S^2$ with an appropriate choice of parameter $a$.

The proof of global existence in Theorem \ref{globalgeodesic} can be extended without much difficulty, since everything there depends on having a uniform bound for the potential vorticity $\Laplacian_{\contact}f - \alpha^2 f$, which now gets replaced by the real potential vorticity $\Laplacian_{\contact}f - \alpha^2f - \beta \cos{\theta}$. Obviously a uniform bound on one of these implies a uniform bound on the other, so that all the techniques immediately generalize, and we obtain global existence for solutions of the real quasigeostrophic equation on $S^2$.
\end{rem}

\makeatletter \renewcommand{\@biblabel}[1]{\hfill#1.}\makeatother


\begin{thebibliography}{1000}

\bibitem[AMR]{AMR}
R. Abraham, J. E. Marsden, T. Ratiu, \emph{Manifolds, tensor analysis, and applications}, Springer-Verlag, New York, 1988. 


\bibitem[A1]{arnold}
V. Arnold, {\em Sur la g\'eom\'etrie diff\'erentielle des groupes de Lie
de dimension infinie et ses applications \`a l'hydrodynamique des fluids
parfaits}, Ann. Inst. Grenoble {\bf 16} no. 1 pp. 319--361 (1966).


\bibitem[AK]{arnoldkhesin}
V. Arnold and B.~Khesin, {\em Topological methods in hydrodynamics},
Springer, New York, 1998.

\bibitem[B]{blair}
D.E. Blair, \emph{Riemannian Geometry of Contact and Symplectic Manifolds}, 
second edition, Birkh\"auser, Boston, 2010.

\bibitem[BW]{BW}
W.M. Boothby and H.C. Wang, \emph{On contact manifolds}, 
Ann. Math. \textbf{68} no. 3, pp. 721--734 (1958).




\bibitem[CS]{CS}
J.G. Charney and M.E. Stern, \emph{On the stability of internal baroclinic jets
in a rotating atmosphere}, J. Atmospheric Sci. \textbf{19} no. 2, pp. 159--172 (1962). 

\bibitem[E]{ebinthesis}
D.G.~Ebin, \emph{The manifold of Riemannian metrics}, Proc. Symp. Pure Math. AMS \textbf{15} 11-40 (1970).

\bibitem[E2]{ebinsymplectic}
D.G.~Ebin, \emph{Geodesics on the symplectomorphism group}, Geom. Funct. Anal. \textbf{22} no. 1, pp. 202--212 (2012) 


\bibitem[EbPr]{ebinprestoncontacto}
D.G.~Ebin and S.C.~Preston, \emph{Geodesics on the contactomorphism group}, in preparation.

\bibitem[ElPo]{ElPol}
Y. Eliashberg and L. Polterovich, \emph{Bi-invariant metrics on the group of Hamiltonian diffeomorphisms}, Internat. J. Math., \textbf{4} no 5., pp. 727--738 (1993)

\bibitem[EKM]{EKM}
J. B.~Etnyre, R. Komendarczyk, and P. Massot,
\emph{Tightness in contact metric $3$-manifolds}, Invent. Math. \textbf{188} no. 3, pp. 621--657 (2012).

\bibitem[EM]{EM}
D.G.~Ebin and J.~Marsden, \emph{Groups of diffeomorphisms and the
motion of an incompressible fluid}, Ann. Math. {\bf 92} no. 1,
pp. 102--163 (1970).


\bibitem[GT]{GT}
F. Gay-Balmaz and C. Tronci, \emph{Vlasov moment flows and geodesics on the Jacobi group},
arXiv:1105.1734v1.

\bibitem[GV]{GV}
F. Gay-Balmaz and C. Vizman, \emph{Dual pairs in fluid dynamics}, Ann. Global Anal. Geom. 
\textbf{41} no. 1, pp. 1--24 (2012). 

\bibitem[G]{geiges}
H. Geiges, \emph{Contact geometry}, in Handbook of Differential Geometry vol. 2,
Elsevier, Amsterdam, 2006.

\bibitem[H]{hartman}
P. Hartman, \emph{Ordinary differential equations}, second edition, Classics in Applied Mathematics \textbf{38}, SIAM, Philadelphia, 2002.

\bibitem[HT]{HT}
D.D. Holm and C. Tronci, \emph{Geodesic Vlasov equations and their integrable moment closures}, 
J. Geom. Mech. \textbf{1} no. 2, pp. 181--208 (2009).

\bibitem[HZ]{holmzeitlin}
D.D. Holm and V. Zeitlin, \emph{Hamilton's principle for quasigeostrophic motion}, Phys. Fluids \textbf{10} no. 4, pp. 800--806 (1998).

\bibitem[Ka]{kato}
T. Kato, \emph{On classical solutions of the two-dimensional nonstationary Euler equation}, Arch. Rational 
Mech. Anal. \textbf{25} pp. 188--200 (1967).

\bibitem[KM]{KM}
B. Khesin and G. Misio{\l}ek, \emph{Euler equations on homogeneous spaces and
Virasoro orbits}, Adv. Math. \textbf{176} pp. 116--144 (2003).

\bibitem[Ku]{kuo}
H.L. Kuo, \emph{Finite-amplitude three-dimensional harmonic waves on the spherical earth}, J. Atmospheric Sci. \textbf{16} no. 5, pp. 524--534 (1959).

\bibitem[L]{lang}
S. Lang, \emph{Fundamentals of differential geometry}, Springer-Verlag, New York, 1999.

\bibitem[Mi]{misiolek}
G. Misio{\l}ek, \emph{Stability of flows of ideal fluids and the geometry of the group of diffeomorphisms}, Indiana U. Math. J. \textbf{42} no. 1, pp. 215--235 (1993).

\bibitem[Mo]{Mos}
J. Moser, \emph{On the volume elements of a manifold}, Trans. Am. Math. Soc. \textbf{120} pp. 286--294 (1965).




\bibitem[MP]{MP}
G. Misio{\l}ek and S. C. Preston, \emph{Fredholm properties of Riemannian exponential maps on diffeomorphism groups}, Invent. Math. \textbf{179} no. 1, pp. 191--227 (2010).


\bibitem[O]{omori}
H. Omori, \emph{Infinite dimensional Lie transformation groups}, Springer-Verlag, Berlin, 1974.

\bibitem[P]{petersen}
P. Petersen, \emph{Riemannian geometry}, second edition, Springer, New York, 2006.

\bibitem[RS]{RS}
T.~Ratiu and R.~Schmid, \emph{The differentiable structure of three remarkable 
diffeomorphism groups}, Math. Z. \textbf{177} no. 1, pp. 81--100 (1981).

\bibitem[STS]{STS}
W.H. Schubert, R.K. Taft, and L.G. Silvers, 
\emph{Shallow water quasi-geostrophic theory on the sphere},
J. Adv. Model. Earth Syst., \textbf{1} no. 2, 17 pp. (2009).

\bibitem[S1]{smolentsev_vectorfield}
N.K.~Smolentsev, \emph{A group of diffeomorphisms which leave a vector field fixed},
Siberian Math. J. \textbf{25} no. 2, pp. 313--317 (1984).

\bibitem[S2]{smolentsevclassical}
N.K.~Smolentsev, \emph{Curvature of the classical diffeomorphism groups},
Siberian Math. J. \textbf{35} no. 1, pp. 155--161 (1994).

\bibitem[S2]{smolentsev}
N.K.~Smolentsev, \emph{Diffeomorphism groups of compact manifolds}, 
J. Math. Sci. \textbf{146} no. 6, pp. 6213--6312 (2007).


\bibitem[T]{taylor}
M.E.~Taylor, \emph{Partial Differential Equations} Volumes 1--3, Springer- Verlag, New York, 1996.

\bibitem[ZP]{zeitlinpasmanter}
V. Zeitlin and R.A. Pasmanter, \emph{On the differential geometry approach to geophysical flows}, Phys. Lett. A \textbf{189} no. 1--2, pp. 59--63 (1994).

\end{thebibliography}
\end{document}